\documentclass[11pt]{amsart}
\usepackage{amssymb}
\usepackage{amsmath}
\usepackage{amsthm}
\usepackage{color}

\textwidth 15.00cm
\textheight 22.5cm
\topmargin 0.20cm
\headsep15pt
 \headheight 15pt
\oddsidemargin .5cm
\evensidemargin .5cm
\marginparwidth 40pt
\parskip 5pt

\makeatletter
\def\LaTeX{\leavevmode L\raise.42ex
    \hbox{\kern-.3em\size{\sf@size}{0pt}\selectfont A}\kern-.15em\TeX}
\makeatother

\newcommand{\BibTeX}{{\rm B\kern-.05em{\sc
          i\kern-.025emb}\kern-.08em\TeX}}

\makeatletter
\def\@currentlabel{2.1}\label{e:dispaa}
\def\@currentlabel{2.21}\label{e:dispau}
\def\@currentlabel{2.22}\label{e:dispav}
\def\@currentlabel{2.23}\label{e:dispaw}
\def\@currentlabel{2.24}\label{e:dispax}
\def\theequation{\thesection.\@arabic\c@equation}
\makeatother

\newcounter{mnotecount}[section]

\newcommand{\rmnote}[1]{}

\renewcommand{\theequation}{\arabic{section}.\arabic{equation}}
\newtheorem{thm}{Theorem}[section]
\newtheorem{lem}[thm]{Lemma}
\newtheorem{cor}[thm]{Corollary}
\newtheorem{prop}[thm]{Proposition}
\theoremstyle{definition}
\newtheorem{defn}{Definition}[section]
\newtheorem{rem}{Remark}[section]

\newcommand{\B}{\mathbb B}
\newcommand{\M}{\mathbb{M}}
\newcommand{\R}{\mathbb{R}}

\allowdisplaybreaks

\begin{document}

\title[Forward self-similar solutions ]{Forward self-similar solutions of the fractional Navier-Stokes Equations}

\author[B. Lai]{Baishun Lai }

\address{ Institute of Contemporary Mathematics, Henan University, Kaifeng 475004, P.R. China}

\email{laibaishun@henu.edu.cn}

\author[C. Miao]{Changxing Miao}

\address{Institute of Applied Physics and Computational Mathematics, P.O. Box 8009, Beijing 100088, P.R. China }

\email{miao\_{}changxing@iapcm.ac.cn}

\author[X. Zheng]{Xiaoxin Zheng}

\address{ School of Mathematics and Systems Science, Beihang University, Beijing 100191, P.R. China}

\email{xiaoxinzheng@buaa.edu.cn}

 \date{\today}
 \subjclass[2000]{35Q30; 35B40;  76D05.}
 \keywords{Self-similar solution; nonlocal effect; blowup argument; the weighted estimate}
 \maketitle
\begin{abstract}
 We  study  forward self-similar solutions to the 3-D Navier-Stokes equations with the fractional diffusion $(-\Delta)^{\alpha}.$  First, we construct a global-time forward self-similar solutions to the fractional Navier-Stokes equations with $5/6<\alpha\leq1$  for arbitrarily large self-similar initial data by making use of the
  so called blow-up argument. Moreover, we prove that this solution is smooth in $\mathbb R^3\times (0,+\infty)$. In particular, when $\alpha=1$, we prove that
the solution constructed by  Korobkov-Tsai \cite[Anal. PDE 9 (2016), 1811-1827]{KT}  satisfies  the decay estimate by establishing  regularity of
solution for the corresponding elliptic system, which implies this solution has the  same properties as a solution which was constructed in
\cite[Jia and \v{S}ver\'{a}k, Invent. Math. 196 (2014), 233-265]{JS}.
\end{abstract}

\setcounter{equation}{0}
 \setcounter{equation}{0}
\section{Introduction}
In this paper, we consider the generalized incompressible Navier-Stokes equations with fractional Laplace operator:
\begin{equation}\label{E1.1}
\left\{
\begin{aligned}
&u_{t}+(-\Delta)^{\alpha} u+u\cdot\nabla u+\nabla p=0, \quad (t,x)\in(0,+\infty)\times \R^{3}\\
& \mbox{div}\, u=0,   \qquad\; (t,x)\in(0,+\infty)\times \R^{3} \\
\end{aligned} \right.
\end{equation}
with initial condition
\begin{align}\label{E1.1-}
u(x,0)=u_{0}(x), \; \; \;\; x\in \R^{3}.
\end{align}
Here the column vector $u=(u_{1}, u_{2}, u_{3})^{t}$ denotes the velocity field, the scalar function $p$ stands for the pressure which can be recovered at least formally from $u$ via Calder\'{o}n-Zygund operators,   and the fractional Laplacian $(-\Delta)^{\alpha}$ with $0<\alpha<1$ is a nonlocal operator (also called L\'{e}vy operator) defined by
$$
(-\Delta)^{\alpha} u(x)\triangleq c_{n,\alpha} {\rm p.v.}\int_{\R^{n}}(u(x)-u(z))\frac{{\rm d}z}{|x-z|^{n+2\alpha}},\ \ u\in \mathcal{S}(\R^{n}),
$$
 where $\mathcal{S}(\R^{n})$  is the Schwartz class of smooth real or complex-valued rapidly decreasing functions,
 {\rm p.v.} stands for the Cauchy principle value, i.e.
$$
(-\Delta)^{\alpha} u(x)=c_{n,\alpha}\lim_{\epsilon\to0}\int_{\{z\in\R^{n}: |y|>\epsilon\}}(u(x)-u(z))\frac{{\rm d}y}{|x-y|^{n+2\alpha}}
$$
with
$$c_{n,\alpha}\triangleq\frac{\alpha(1-\alpha)4^{\alpha}\gamma(\frac{n}{2}+\alpha)}{\gamma(2-\alpha)\pi^{\frac{n}{2}}}, \;\;\; \gamma(r)=\int_{0}^{+\infty}s^{r-1}e^{-s} \; {\rm d}s, \;\; r>0. $$
 Alternatively,  the fractional operator $(-\Delta)^{\alpha}$ can be equivalently defined by the Fourier transform:
$$
\widehat{(-\Delta)^{\alpha}u}(\xi)=\big(|\xi|^{2\alpha}\big)\widehat{u}(\xi).
$$
Recently, there is an increasing interest for studying the fractional Navier-Stokes equations \eqref{E1.1}, since they naturally appear in hydrodynamics, statistical mechanics, physiology, certain combustion models, and so on \cite{R,W}. As a simplified model of Eqs. \eqref{E1.1}, the following fractional Burgers equation has been studied by many mathematicians and physicists such as Biler-Funaki-Woyczynski \cite{Biler} and Kiselev-Nazarov-Schterenberg \cite{Kiselev},
$$
u_{t}+(-\Delta)^{\alpha} u+uu_{x}=0,\;\;\; t>0, \;\; u(x,0)=\varphi(x),
$$
where $u(x,t): \R\times (0,+\infty)\to\R$.  For the fractional Navier-Stokes equations \eqref{E1.1}, the existence and uniqueness of solutions have been established by Wu \cite{Wu} in the framework of Besov spaces,  and by Zhang \cite{Zhang} via probabilistic approach.
More recently, Tang-Yu \cite{Tang} established the partial regularity of the suitable weak solution of problem \eqref{E1.1} in the $L^2$-framework, which can be viewed as a generalization of the CKN regularity criterion \cite{Caffarelli} for the following classical Navier-Stokes system:
\begin{equation}\label{E1.2}
\left\{
\begin{aligned}
&u_{t}-\Delta u+u\cdot\nabla u+\nabla p=0, \quad (t,x)\in(0,+\infty)\times \R^{3}\\
& \mbox{div}\, u=0,   \qquad\; (t,x)\in(0,+\infty)\times \R^{3} \\
\end{aligned} \right.
\end{equation}
\begin{align}\label{E1.2-}
u(x,0)=u_{0}(x) \ \ \ \ \text{in}\ \ \R^{3}.
\end{align}
Similar to  the above classical Navier-Stokes equations \eqref{E1.2}-\eqref{E1.2-}, the fractional Navier-Stokes system \eqref{E1.1}-\eqref{E1.1-}
also enjoys the scaling property. Specifically, if $(u,p)$ is the solution of equations \eqref{E1.1}-\eqref{E1.1-}, then, for all $\lambda>0$, $(u_\lambda,p_\lambda)$ is also the solution of equations \eqref{E1.1} corresponding to the initial data $u_{0\lambda},$ where
\begin{align*}
  u_{\lambda }(x,t)\triangleq\lambda^{2\alpha-1}u(\lambda x,\lambda^{2\alpha}t), \quad
  p_{\lambda }(x,t)\triangleq\lambda^{4\alpha-2}p(\lambda x,\lambda^{2\alpha}t)
  \end{align*}
  and
\[ u_{0 \lambda }(x)\triangleq\lambda^{2\alpha-1}u_{0}(\lambda x).\]
According to this scaling property, we want to investigate a solution which is invariant under this scaling.
 We call such solutions \emph{as self-similar solutions} which include two types: one is {\em a  forward self-similar solution}, another is {\em a backward self-similar solution}.
 A {\em  forward self-similar solution} is a solution on $\R^{3}\times(0,+\infty) $  such that for every $\lambda>0,$
$$u(x,t)=u_\lambda(x,t)\quad\text{and} \quad p(x,t)= p_\lambda(x,t).$$
  A {\em  backward self-similar solution} is a solution on $\R^{3}\times(-\infty,0) $ such that  for every $\lambda>0,$
$$u(x,t)=u_\lambda(x,t)\quad\text{and} \quad p(x,t)= p_\lambda(x,t).$$
A  question on the existence of the self-similar blow-up solution of \eqref{E1.2} was initially stated  by Leray~\cite{Ler}.
According to the above definition, it is easy to verify that the trivial solution is a trivial backward self-similar solution. However, the nontrivial self-similar blow-up solution with finite energy
does not exist, which was firstly proved by Ne\v{c}as-R{\aa u}\v{z}i\v{c}ka-\v{S}ver\'{a}k in \cite{Ne}.
Later, Tsai~\cite{Tsai} further proved the nonexistence of the backward self-similar solutions with local finite energy.

In contrast with the case of backward self-similar solutions,  several results of nontrivial forward self-similar solutions were established in the past years. In  \cite{C1,C2}, Cannone-Meyer-Planchon firstly proved the  existence and uniqueness of the small forward self-similar solutions in the framework of  homogeneous Besov spaces, see also for examples Barraza \cite{B} in Lorentz space $L^{(3,\infty)}(\mathbb R^3)$, and Koch and Tataru \cite{KT} in $\text{BMO}^{-1}(\R^3)$.

For  large scale-invariant initial values, it is well-known that the perturbation argument  such as the contraction mapping no longer works,
and one attempts to seek  other methods  to establish existence of self-similar solution.
Recently, Jia and \v{S}ver\'{a}k \cite{JS} constructed large  self-similar solutions
by developing so called local-in-space regularity estimates near the initial time and  applying the {\em Leray-Schauder fixed-point theorem}.
Later on ,
Korobkov-Tsai \cite{KT} proposed an alternative method of constructing self-similar solutions without pointwise bound via so called blow-up argument.
For the sake of convenience, we first recall the framework which was developed in \cite{JS} and \cite{KT}.

 Formally, one
can reduce the study of problem \eqref{E1.2} into that of the corresponding integral equation. More precisely,
seeking a self-similar solution $u(x,t)$ of \eqref{E1.2} is equivalent to find a self-similar solution of
$$
u=e^{t\Delta}u_{0}-\int_{0}^{t}e^{(t-s)\Delta}\mathbb{P}\big(\nabla\cdot(u\otimes u)\big)\,{\rm d}s,
$$
where $\mathbb{P}={\rm Id}-\nabla(\Delta)^{-1}{\rm div}$ is called the Leray-Hopf projection onto the divergence-free vector fields.
 Using the homogeneity of $u_{0}$ and the self-similarity of $u=t^{-1/2}U(t^{-1/2}x)$, the above problem is equivalent to  find a solution $U$ of
 the equation
\begin{equation}\label{E1.3}
U=V_{0}-\mathcal{T}(U)
\end{equation}
with
$$
V_{0}=\int_{\R^{3}}G_{1}(x-z)u_{0}(z)\,{\rm d}z
$$
and
$$
\mathcal{T}(U)=\sum_{j=1}^{3}\int_{0}^{1}\int_{\R^{3}}\frac{1}{(1-\tau)^{2}}
\Big(\partial_{j}\mathcal{O}\Big(\frac{x-z}{(1-\tau)^{\frac{1}{2}}}\Big)\Big)::\tau^{-1} U_{j}(\frac{z}{\tau^{\frac{1}{2}}})U(\frac{z}{\tau^{\frac{1}{2}}})\,{\rm d}z{\rm d}\tau.
$$
Here $G_{1}(x)$ is the profile of the heat kernel at $t=1$ and  $\mathcal{O}=(\mathcal{O}_{j,k})$ is Oseen's kernel with
$$\mathcal{O}_{j,k}(x)=\delta_{jk}G_1(x)+\Gamma*\partial_j\partial_kG_1, \;\; \text{where}\; \Gamma(x)\;\text{  is Newton potential}.$$

To solve equation \eqref{E1.3}, it suffices to verify  that $\mathcal{T}$ satisfies all the requirements of the Leray-Schauder principle in some selected Banach space $X$:

(i)\; $\mathcal{T}: X\buildrel{}\over{-\!\!\!\!\rightarrow}X$ is a continuous and compact operator.

(ii)\; There exists a constant $C$ such that, for every $\lambda\in[0,1],$ $$U=\lambda(U_{0}-\mathcal{T}(U))\;\Longrightarrow\; \|U\|_{X}\leq C.$$

For (i), the key point is to find a suitable functional-analytic setup to obtain compactness  of operator.
For (ii), the main difficulty of this step is to  establish an a-priori estimate of solutions, which help us  to
apply the continuation method to solve \eqref{E1.3}. In order to overcome these difficulties, Jia and \v Sver\'ak \cite{JS} developed   the so-called
local-in-space regularity estimates near the initial time $t=0$  to establish the H\"{o}lder estimate for local-Leray solutions constructed by Lemari\'{e}-Rieusset \cite{LR}. This estimate  enables them to obtain regularity  of self-similar solutions outside the ball, and then they got
 a better decay estimate of such solution for large $|x|$, which ensures the operator $\mathcal{T}$ to be compact in a suitable setting.

  Our goal in this paper is to apply the {\em Leray-Schauder fixed-point theorem} to  construct a forward self-similar solution $u(x,t)$ of equation
  \eqref{E1.1}  which takes  the form
$$u(x,t)=t^{\frac{1-2\alpha}{2a}}u\Big(\frac{x}{t^{\frac{1}{2\alpha}}}, 1\Big)\triangleq t^{\frac{1-2\alpha}{2a}}U
\Big(\frac{x}{t^{\frac{1}{2\alpha}}}\Big)\; \; \; \mbox{with}\; \; U(x)=u(x,1),$$
when the  corresponding initial value $u_{0}(x)$ satisfies  following scaling:
$$u_{0}(x)=\lambda^{2\alpha-1}u_{0}(\lambda x)\ \ \;\;\;\text{for all}\,\,\, \lambda>0.$$
Setting $U_{0}=e^{-(-\Delta)^{\alpha}}u_{0}$, we easily find that the difference  $V\triangleq U-U_{0}$ solves
the following  fractional elliptic equation
\begin{equation}\label{E1.4a}
(-\Delta)^{\alpha}V+\nabla P= \frac{2\alpha-1}{2\alpha}V(x)+\frac{1}{2\alpha}x\cdot \nabla V-U_{0}\cdot\nabla U_{0}
-(U_{0}+V)\cdot\nabla V-V\cdot\nabla U_{0}.
\end{equation}
  According to the Leray-Schauder fixed point theorem, the main task is now to establish  regularity estimates for  solutions of problem \eqref{E1.4a}.
  To do this, we need to overcome two difficulties. First,  the argument of Tang-Yu \cite{Tang} seems to be infeasible for Lemari\'{e}-Rieusset's solution \cite{LR}  in the framework of uniformly locally square space $L_{\rm uloc}^{2}(\R^3)$. This show  that the methods used in \cite{JS} does not directly apply to  our problem.
  a second class of difficulties concerns  the fractional diffusion operator which is a nonlocal operator. To overcome both difficulties, we will adopt the  following regularization of  the fractional Navier-Stokes equations~\eqref{E1.1} by adding an artificial diffusion $\epsilon\Delta V $
\begin{equation}\label{E1.4}
-\epsilon\Delta V+(-\Delta)^{\alpha}V+\nabla P=\lambda\Big(\frac{2\alpha-1}{2\alpha}V(x)+\frac{1}{2\alpha}x\cdot \nabla V-U_{0}\cdot\nabla U_{0}
-(U_{0}+V)\cdot\nabla V-V\cdot\nabla U_{0}\Big)
\end{equation}
By the blow-up argument used in Korobkov-Tsai~\cite{KT},  we firstly show that  $V(x)$ of \eqref{E1.4} satisfies the following a priori estimate
$$
\int_{\R^{3}}\Big(\epsilon|\nabla V|^{2}+|(-\Delta)^{\frac{\alpha}{2}}V|^{2}+\frac{5-4\alpha}{4\alpha}|V|^{2}\Big)\,{\rm d}x\leq C(U_{0}).
$$
This a priori estimate helps us to  prove  that the equation \eqref{E1.4} possess at least one  distributional weak solution.
 However, this uniform estimate is not sufficient to obtain the natural pointwise estimate of a self-similar solution for $\alpha=1$ which was established in \cite{JS}. In fact, since $|x|$ is not bounded, it is difficult  to get higher  regularity by the classical elliptic theory,  directly.
Thus, we develop a new technique  to obtain  higher  regularity of a solution $V$ of equation \eqref{E1.4a}.
First, we choose an appropriate test function $\varphi$ in the weighted-$H^{1}(\R^{3})$ space, and then we derive the following key estimate
$$
|x|V(x)\in H^{1}(\R^{3}).
$$
Based on this regularity, we  show following behavior of $V$ for large $|x|$,
 $$|V(x)|\leq \frac{ C}{1+|x|}\qquad\text{for all}\,\,\,x\in\R^3.$$
With this decay estimate in hand, we eventually get by the property of the fundamental solution that
$$
|V(x)|\leq \frac{C}{(1+|x|)^{3}}\log(1+|x|)\qquad\text{for all}\,\,\,x\in\R^3.
$$
Now we state our main result as follows:
\begin{thm}\label{T1.1}
Assume $\frac{5}{6}<\alpha\leq1$. Let  $u_{0}=\frac{\sigma(x)}{|x|^{2\alpha-1}} \text{ with }\sigma(x)=\sigma(x/|x|)\in L^{\infty}(S^{2})$, which satisfies $\mathrm{div}\, u_{0}=0$ in $\mathbb R^3\setminus \{0\}$. Then problem \eqref{E1.1}-\eqref{E1.1-}  admits at least one forward self-similar
solution $u\in \mathrm{BC}_{\rm w}\big ([0,+\infty),\,L^{(\frac{3}{2\alpha-1},\infty)}(\R^3)\big)$  such that
\begin{itemize}
\item  for each $p\in \big[2,\frac{6}{3-2\alpha}\big]$, $\big\|u(t)-e^{-t(-\Delta)^{\alpha}}u_{0}\big\|_{L^{p}(\R^{3})}\leq Ct^{\frac{1}{2\alpha}(1+\frac{3}{p})-1};$
\item  $u(x,t)$ is smooth in $\R^{3}\times (0,+\infty);$
\item for $\alpha=1$, then we have the following pointwise estimates
\begin{equation}\label{E1.6}
|u(x,t)|\leq\frac{C}{|x|+\sqrt{t}}\quad\text{and}\quad \left|u(x,t)-e^{t\Delta}u_{0}\right|\leq \frac{Ct}{|x|^{3}+t^{\frac32}}\log\Big(1+\frac{|x| }{\sqrt{t}}\Big)
\end{equation}
for all $(x,t)\in \R^{3}\times(0,+\infty)$.
\end{itemize}
\end{thm}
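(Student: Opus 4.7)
The plan is to reduce the problem to constructing a solution $V$ of the elliptic system \eqref{E1.4a}; once $V$ is produced, setting $U=U_0+V$ and undoing the self-similar scaling yields the forward self-similar solution $u(x,t)=t^{(1-2\alpha)/(2\alpha)}U(x/t^{1/(2\alpha)})$, and the weak continuity in $L^{(3/(2\alpha-1),\infty)}$ together with the $L^p$-difference bound $\|u(t)-e^{-t(-\Delta)^\alpha}u_0\|_{L^p}\le C t^{\frac{1}{2\alpha}(1+3/p)-1}$ follows from $V\in L^2\cap L^\infty$ via Hausdorff-Young / Sobolev in the self-similar variables. To produce $V$, I would apply the Leray-Schauder fixed point theorem to the regularized system \eqref{E1.4} in a weighted $H^1$-type Banach space $X$: the added term $-\epsilon\Delta V$ restores uniform ellipticity so that the solution operator $V\mapsto\mathcal{T}_\epsilon(V)$ is compact on $X$ by Rellich, and then one passes $\epsilon\to 0$ using the uniform fractional energy bound below.

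The first critical step is the uniform a priori bound on solutions of $V=\lambda\mathcal{T}_\epsilon(V)$. Following the blow-up strategy of Korobkov-Tsai, I would assume a sequence of solutions $V_n$ with $\|V_n\|_X\to\infty$, renormalize $W_n=V_n/\|V_n\|_X$, and derive a contradiction from the limiting homogeneous problem. The basic energy identity is obtained by testing \eqref{E1.4} against $V$: the drift term $\frac{1}{2\alpha}x\cdot\nabla V$ produces after integration by parts precisely the coercive term $\frac{5-4\alpha}{4\alpha}\int|V|^2$, which is positive exactly when $\alpha>5/6$, explaining the threshold in the theorem. Combined with $\int|(-\Delta)^{\alpha/2}V|^2$ and $\epsilon\int|\nabla V|^2$, this yields the stated uniform estimate
\begin{equation*}
\int_{\R^3}\Bigl(\epsilon|\nabla V|^{2}+|(-\Delta)^{\alpha/2}V|^{2}+\tfrac{5-4\alpha}{4\alpha}|V|^{2}\Bigr)\,\mathrm{d}x\le C(U_0),
\end{equation*}
where the nonlinear terms involving $U_0$ are absorbed using the decay of $U_0=e^{-(-\Delta)^\alpha}u_0$ inherited from the $L^\infty$-homogeneity of $u_0$.

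The second critical step is the weighted estimate $|x|V\in H^1(\R^3)$. I would choose the test function $\varphi=|x|^2 V\eta_R$ with a smooth cutoff $\eta_R$ and pass $R\to\infty$ after controlling commutators. The drift $\frac{1}{2\alpha}x\cdot\nabla V$ tested against $|x|^2 V$ again produces a coercive weighted term $\int|x|^2|V|^2$; the nonlocal commutator $[(-\Delta)^\alpha,|x|^2]$ is handled via its kernel representation together with the $L^2$-bound on $V$ and the $L^\infty$-decay of $U_0$. From $|x|V\in H^1$, the Sobolev embedding plus a Moser iteration outside a large ball delivers $|V(x)|\le C/(1+|x|)$. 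When $\alpha=1$, inserting this pointwise decay back into the representation of $V$ via the Oseen-type fundamental solution and convolving against the source yields the claimed
\begin{equation*}
|V(x)|\le\frac{C\log(1+|x|)}{(1+|x|)^3};
\end{equation*}
combining this with the heat-semigroup bound for $e^{t\Delta}u_0$ and rescaling gives \eqref{E1.6}. Smoothness on $\R^3\times(0,+\infty)$ follows by bootstrapping \eqref{E1.4a} once this pointwise control is available.

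The hard part will be the weighted regularity $|x|V\in H^1$, because for $0<\alpha<1$ the operator $(-\Delta)^\alpha$ is genuinely nonlocal, so the commutator with the polynomial weight $|x|^2$ is not a local differential operator but a singular integral whose cancellation must be extracted carefully; the balance between this commutator's long-range contribution and the positive weighted term $\frac{1}{2\alpha}\int|x|^2|V|^2$ (which is what saves the estimate) is the delicate point, and it is also the reason why the natural pointwise decay is obtained only via a two-step scheme (first $1/(1+|x|)$, then the sharper cubic decay via the fundamental solution).
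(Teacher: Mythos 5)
Your overall architecture (decompose $U=U_0+V$, regularize with $-\epsilon\Delta$, Leray--Schauder, blow-up argument for the a priori bound, weighted estimates for decay) matches the paper's, but several load-bearing steps are either misidentified or left without the idea that actually makes them work.

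First, your explanation of the threshold $\alpha>\frac56$ is wrong: the coefficient $\frac{5-4\alpha}{4\alpha}$ produced by testing the drift term against $V$ is positive for every $\alpha<\frac54$, so coercivity of that term is never the issue. The real constraints are that $U_0\cdot\nabla U_0\in L^2(\R^3)$ (which needs $\alpha>\frac58$, since $|U_0\cdot\nabla U_0|\lesssim\langle x\rangle^{-(4\alpha-1)}$), that $H^\alpha(\R^3)\hookrightarrow L^{\frac{3}{2\alpha-1}}(\R^3)$ (equivalent to $\frac{3}{2\alpha-1}\le\frac{6}{3-2\alpha}$, i.e.\ $\alpha\ge\frac56$), and the bootstrap for smoothness, which uses $\alpha>\frac56$ to get $\mathrm{div}\,G_2\in L^{3/(9-8\alpha)}$ with exponent $>1$. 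Second, your blow-up argument stops exactly where the difficulty begins: after renormalizing you must identify the limit as a solution of the stationary Euler system on $\B_R$ with zero boundary data, invoke the Bernoulli-type fact that the limiting pressure is constant on $\partial\B_R$ (Lemma \ref{L3.1}), and then compute $1=-\lambda_0\int_{\B_R}U_0\cdot\nabla P\,\mathrm{d}x=c\lambda_0\int_{\partial\B_R}U_0\cdot\vec n\,\mathrm{d}s=0$. Without this step the normalized energy identity $1=\lambda_0\int(V\cdot\nabla V)\cdot U_0$ yields no contradiction.

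Third, your ordering for regularity is circular for $\frac56<\alpha<1$: you propose to prove smoothness ``once this pointwise control is available,'' but the pointwise decay in your own plan rests on the weighted estimate with the commutator $[(-\Delta)^\alpha,|x|^2]$, which is a genuinely hard nonlocal object that neither you nor the paper actually controls for $\alpha<1$. The paper avoids it entirely: smoothness for all $\alpha\in(\frac56,1]$ is proved first, with no decay input, via $L^p$ resolvent estimates for the fractional Stokes operator $(-\Delta)^\alpha+\lambda+\nabla q$ and a localization/bootstrap in Bessel potential spaces; the weighted estimates are carried out only for $\alpha=1$, where the Laplacian is local and the test function $h_\varepsilon^2V$ with $h_\varepsilon=|x|(1+\varepsilon|x|^2)^{-3/4}$ produces only local, explicitly computable commutators. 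Finally, your jump from $|V|\lesssim(1+|x|)^{-1}$ to the cubic-log decay skips the intermediate estimates $\sup|x|^2(|V|+|\nabla V|)<\infty$ and $\sup|x|^2|\nabla P|<\infty$, which require the full chain $W=|x|V\in H^1\Rightarrow V\in H^2\Rightarrow W\in H^2\Rightarrow|x|\nabla V\in H^2$ plus a separate Calder\'on--Zygmund analysis of the pressure; these are not cosmetic, since the representation-formula argument for the final decay needs the source term to decay like $(1+|x|)^{-2}$ pointwise.
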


\begin{rem}

When $\alpha=1$, Jia-Sverak prove the existence of
forward self-similar solution and the associate wise-point estimates \eqref{E1.6} in \cite{JS}.
Korobkov-Tsai~\cite{KT}
give another proof the existence of forward self-similar solution was shown in \cite{KT} via
the blowup argument. But they did not show that the solution has the decay estimate.
 In Theorem \ref{T1.1}, we obtain this estimate by developing  new weighted-$H^1(\R^3)$ space
of weak solution $V(x)$ to system (1.6). This answers the problem posed in Korobkov-Tsai~\cite{KT}.
In other words, we give an alternative construction method of the existence of forward self-similar
solution.
\end{rem}
\begin{rem}
According to the scaling analysis, it is well-known that system (1.1) has the same
scaling with the following nonlinear equation
\begin{equation}\label{eq.sim}
u_t+(-\Delta)^\alpha u=|u|^{\frac{4\alpha-1}{2\alpha-1}-1}u\qquad \text{in}\,\,\,\,\R^3\times(0,+\infty).
\end{equation}
Now we consider the stationary solution $U$ of problem \eqref{eq.sim}, which solves
\begin{equation*}
(-\Delta)^\alpha U=|U|^{\frac{4\alpha-1}{2\alpha-1}-1}U\qquad \text{in}\,\,\,\,\R^3.
\end{equation*}
Denote kinetic energy $K\triangleq\|U\|_{\dot{H}^\alpha(\R^3)}$ and potential energy $P\triangleq\|U\|_{L^{\frac{2(3\alpha-1)}{2\alpha-1}}(\R^3)}.$ Thanks to the
embedding theorem, we find that the kinetic energy can not control the potential energy if $\alpha<\frac56$. Inspired by this analysis,
we call system \eqref{E1.4a} is supcritical if $\alpha<\frac56$. Because  super-criticality usually implies that the kinetic
energy can not control the nonlinearity, we give a roughly explanation on condition $\alpha\in(5/6,1]$ in Theorem \ref{T1.1}.
\end{rem}

\begin{rem}
In fact, we also proved that system \eqref{E1.1}-\eqref{E1.1-}  admits at least one forward self-similar
solution of the following form $u=u_{\rm L}+v$  such that
 $ u_{\rm L}\in \mathrm{BC}_{\rm w}\big ([0,+\infty),\,L^{\frac{3}{2\alpha-1}}(\R^3)\big) $  for $\frac58<\alpha\leq\frac56$  and $$\big\|v(t)\big\|_{L^{p}(\R^{3})}\leq Ct^{\frac{1}{2\alpha}(1+\frac{3}{p})-1}\,\,\qquad\text{for each}\quad p\in \big[2,\frac{6}{3-2\alpha}\big].$$
 Since $\frac58<\alpha\leq\frac56$, we see that $\frac{6}{3-2\alpha}\leq \frac{3}{2\alpha-1} $ and $\frac{1}{2\alpha}(1+\frac{3}{p})-1>0$. This implies $v\in L^\infty_{\rm loc}([0,+\infty);L^p(\R^3))$ for $p\in \big[2,\frac{6}{3-2\alpha}\big]$.  Moreover, the solution $u$ satisfies problem \eqref{E1.1}-\eqref{E1.1-}  in the sense of distribution.
\end{rem}
\medskip

\noindent\textbf{Notation:}  Let  $\langle x\rangle =(1+|x|^2)^{\frac12}$. We denoted by $\M^{3}$ the space of all real $3\times3$ matrices. Adopting summation over
repeated Latin indices, running from $1$ to $3$, we denote
\[A:B=A_{ij}B_{ij},\ \ |A|=\sqrt{A:A},\ \ \  A=(A_{ij}),\ \ B=(B_{ij})\in \M^{3};\]
\[u\otimes v= (u_{i}v_{j})\in \M^{3},\,\, Au=\big(A_{ij}u_{j}\big)\in\R^{3}, \quad u,v\in\R^{3}\text{ and } A\in \M^{3}.\]
We define
\[\|f\|_{p,\,\lambda}\triangleq\sup_{x\in\mathbb{R}^3}
\Big(\int_{|x-y|<\lambda}|f(y)|^{p}\text{d} y\Big)^{\frac{1}{p}},\]
\[L^p_{\rm ul}(\R^3)\triangleq\big\{f\in L^1_{\rm loc}(\R^3);\,\|f\|_{p,\,1}<\infty\big\},\quad
\|f\|_{L^p_{\rm ul}(\R^3)}\triangleq\|f\|_{p,\,1}.\]
The rest of the paper is structured as follows: Section 2 contains preliminaries which consist of some basic functional spaces, notations and some standard facts on non-local heat operator. In Section 3, we study the existence and regularity of solution  of the corresponding elliptic problem by establishing some a priori estimates, which is the core of our paper.
In Section 4, we give the proof of the Theorem \ref{T1.1}.

\setcounter{equation}{0}
 \setcounter{equation}{0}
\section{Preliminaries}
\subsection{Functional spaces, Littlewood-Paley theory and several useful lemmas}
In this subsection, we firstly review the statement of functional spaces, see for example \cite{Ga}.
Let us begin by  defining the space weak-$L^{p}(\R^{3})$, denoted by $L^{(p,\infty)}(\R^{3})$ as follows:
$$
L^{(p,\infty)}(\R^{3})\triangleq \Big\{u: m\{x\in\R^{3}: |u(x)|>s\}\leq \frac{A}{s^{p}}\ \ \mbox{with some}\ A>0\ \mbox{and all}\ s>0 \Big\}
$$
with norm
$$ \|\cdot\|_{L^{(p,\infty)}(\R^{3})}=\Big(\sup_{s>0} s^{p}\,m\{x\in\R^{3}:\, |u(x)|>s\}\Big)^{\frac{1}{p}}.$$
 Here $m$ denotes the Lebesgue measure on $\R^{3}$.

Next, we recall some basic function spaces in bounded domain.
 Let $\Omega$ be an open set in $\R^{3}$,  $D'(\Omega)=(C_c^{\infty}(\Omega))'$. Let $\alpha\in(0,1)$ and define the fractional Sobolev space $H^{\alpha}(\Omega)$ as following
$$
H^{\alpha}(\Omega)\triangleq\Big\{u\in D'(\Omega): \; u(x)\in L^{2}(\Omega) \;\;\;\text{and}\;\;\frac{|u(x)-u(y)|}{|x-y|^{\frac{3}{2}+\alpha}}\in L^{2}(\Omega\times\Omega)\Big\}
$$
with the norm
$$
\|u\|_{H^{\alpha}(\Omega)}\triangleq\bigg(\int_{\Omega}|u|^{2}\,\mathrm{d}x+\int_{\Omega}\int_{\Omega}\frac{|u(x)-u(y)|^{2}}{|x-y|^{3+2\alpha}}\,{\rm d}x{\rm d}y\bigg)^{\frac{1}{2}}.
$$
 In particular, when $\alpha=1$, the  Sobolev space $H^{1}(\Omega)$  can be defined as
$$
H^{1}(\Omega)\triangleq\Big\{u(x)\in D'(\Omega):\,\, u\in L^{2}(\Omega)\,\,\,\text{and}\,\,\,\nabla u\in L^{2}(\Omega)\Big\}
$$
with the norm
$$
\|u\|_{H^{1}(\Omega)}\triangleq\Big(\int_{\Omega}|\nabla u|^{2}+|u|^{2}\,{\rm d}x\Big)^{\frac{1}{2}}.
$$
One easily see that  $C^{\infty}(\Omega)$ is dense in $H^{\alpha}(\Omega)$. If $\Omega$ is a domain with Lipschitz boundary, then there exists a bounded linear extension operator from
$H^{\alpha}(\Omega)$ to $H^{\alpha}(\R^{3})$. Note that $H^{\alpha}(\R^{3})$ with the norm $\|\cdot\|_{H^{\alpha}(\R^{3})}$ is equivalent to the space
$$
\Big\{u\in L^{2}(\R^{3}): \quad|\xi|^{\alpha}\mathcal{F}(u)(\xi)\in L^{2}(\R^{3})\Big\}
$$
with the norm
$$
\|\cdot\|_{L^{2}(\R^{3})}+\big\||\xi|^{\alpha}\mathcal{F}(\cdot)(\xi)\big\|_{L^{2}(\R^{3})},
$$
where $\mathcal{F}$ denotes the Fourier transform. It is known that (see \cite{LM}) there exists $C>0$ depending only on  $\alpha$ such that for
$U\in H^{1}(\R_{+}^{4},\,t^{1-2\alpha}\,{\rm d}x{\rm d}t)\bigcap C(\overline{\R_{+}^{4}})$
$$ \|U(\cdot,0)\|_{H^{\alpha}(\R^{3})}\leq C\|U\|_{H^{1}(\R_{+}^{4},\,t^{1-2\alpha}\,{\rm d}x{\rm d}t)},$$
where $H^{1}(\R_{+}^{4},\,t^{1-2\alpha}\,{\rm d}x{\rm d}t)$ is defined as
$$
H^{1}\big(\R_{+}^{4},\,t^{1-2\alpha}\,{\rm d}x{\rm d}t\big)\triangleq \Big\{U:\,\, U\in L^{2}\big(\R_{+}^{4},\,t^{1-2\alpha}\,{\rm d}x{\rm d}t\big)\,\text{  and } \,\nabla U\in L^{2}\big(\R_{+}^{4},t^{1-2\alpha}\,{\rm d}x{\rm d}t\big)\Big\}
$$
with
$$\|U\|_{L^{2}\big(\R_{+}^{4},t^{1-2\alpha}\,{\rm d}x{\rm d}t\big)}\triangleq \Big(\int_{0}^{+\infty}\int_{\R^{3}}t^{1-2\alpha}U^{2}(x,t)\,{\rm d}x{\rm d}t\Big)^{\frac{1}{2}}.$$
We know that  every $U\in H^{1}\big (\R_{+}^{4},\,t^{1-2\alpha}\,{\rm d}x{\rm d}t\big)$ has well-defined trace $u\triangleq U(\cdot,0)\in H^{\alpha}(\R^{3})$
 by a standard density argument.

We define $H_{0}^{\alpha}(\Omega)$ as the completion of $C_{0}^{\infty}(\Omega)$ under
the norm $\|\cdot\|_{\dot{H}^{\alpha}(\R^{3})}$.  When $\alpha=1$, $H_{0}^{1}(\Omega)$ the completion of $C_{0}^{\infty}(\Omega)$ under
the following equivalent norm
$$
\|u\|_{H^{1}_0(\Omega)}\triangleq\bigg(\int_{\Omega}|\nabla u|^2(x)\,{\rm d}x\bigg)^{\frac{1}{2}}.
$$
 We also denote
$$
H_{0,\sigma}^{1}(\Omega)\cap H_{0,\sigma}^{\alpha}(\Omega)=\big\{f\in H_{0}^{1}(\Omega)\cap H_{0}^{\alpha}(\Omega):\,\, \mbox{div}\ f=0\big\}.
$$
Let $X$ be a Banach space, we denote by $X'$ the dual space of $X$ with respect to the norm
$$
\|f\|_{X'}\triangleq\sup_{\varphi\in X, \|\varphi\|_{X}\leq1}\Big|\int_{\Omega}f\,\varphi \,{\rm d}x\Big|,\ \ \ \mbox{for any}\ f\in X'.
$$
For $q>1$ denote by $D^{1,q}(\Omega)$ to be
$$
D^{1,q}(\Omega)\triangleq \Big\{f:\,\, f\in W_{\rm loc}^{1,q}(\Omega)\ \ \mbox{and}\ \ \|f\|_{D^{1,q}(\Omega)}=\|\nabla f\|_{L^{q}(\Omega)}<+\infty\Big\}.
$$
Further, denote by $D_{0}^{1,2}(\Omega)$ the closure of $C_{0}^{\infty}(\Omega)$ in $D^{1,2}(\Omega)$ and $$H(\Omega)\triangleq\left\{u:\, \,u\in D_{0}^{1,2}(\Omega)\ \ \mbox{and}\ \ \mbox{div}\ u=0\right\}.$$
Besides, when $\Omega$ is bounded and locally Lipschitz,
 $u\in D^{1,q}(\Omega)$ implies $u\in W^{1,q}(\Omega)$, for details one refers to \cite{Ga}. Finally, we denote the Bessel potential space by
$$
H_{p}^{\alpha}(\R^{n})=\left\{u\in L^{p}(\R^{3}):\,\, (I-\Delta)^{\frac{\alpha}{2}}u\in L^{p}(\R^{3})\right\},
$$
which is equipped with the norm
$$
\|u\|_{H_{p}^{\alpha}(\R^{3})}=\big\|(I-\Delta)^{\frac{\alpha}{2}}u\big\|_{L^{p}(\R^{3})},
$$
and the homogeneous space by
$$
\dot{H}_{p}^{\alpha}(\R^{3})=\left\{u\in D'(\R^{3}):\,\, (-\Delta)^{\frac{\alpha}{2}}u\in L^{p}(\R^{3})\right\}
$$
with the semi-norm
$$
\|u\|_{\dot{H}_{p}^{\alpha}(\R^{3})}=\big\|(-\Delta)^{\frac{\alpha}{2}}u\big\|_{L^{p}(\R^{3})}.
$$

Next, we  review the so-called Littlewood-Paley decomposition described, e.g., in \cite{BCD11}.
Suppose that  $(\chi,\varphi)$ be a couple of smooth functions with  values in $[0,1]$
such that  $\text{supp}\,\chi\subset \big\{\xi\in\mathbb{R}^{3}\big||\xi|\leq\frac{4}{3}\big\}$,
$\text{supp}\,\varphi\subset\big\{\xi\in\mathbb{R}^{3}\,\big|\,\frac{3}{4}\leq|\xi|\leq\frac{8}{3}\big\}$ and
\begin{equation*}
    \chi(\xi)+\sum_{j\in \mathbb{N}}\varphi(2^{-j}\xi)=1\qquad \forall\,\xi\in \mathbb{R}^{3}.
\end{equation*}
For any $u\in \mathcal{S}'(\mathbb{R}^{3})$, let us define
\begin{equation*}
 \Delta_{-1}u\triangleq\chi(D)u\quad\text{and}\quad   {\Delta}_{j}u\triangleq\varphi(2^{-j}D)u\qquad \forall\,j\in\mathbb{N}.
\end{equation*}
Moreover, we can define the low-frequency cut-off:
\begin{equation*}
    {S}_{j}u\triangleq\chi(2^{-j}D)u.
\end{equation*}
So, we easily find  that
\begin{equation*}
    u=\sum_{j\geq-1}{\Delta}_{j}u,\quad \text{in}\quad\mathcal{S}'(\mathbb{R}^{3})
\end{equation*}
which  corresponds to  the \emph{inhomogeneous Littlewood-Paley decomposition}.
In usual, we always use the following properties of quasi-orthogonality:
\begin{equation*}
    {\Delta}_{j}{\Delta}_{j'}u\equiv 0\quad \text{if}\quad |j-j'|\geq 2.
\end{equation*}
\begin{equation*}
{\Delta}_{j}({S}_{j'-1}u{\Delta}_{j'}v)\equiv0\quad \text{if}\quad |j-j'|\geq5.
\end{equation*}
We shall also use the \emph{homogeneous Littlewood-Paley} operators governed by
\begin{equation*}
    \dot{S}_{j}u\triangleq\chi(2^{-j}D)u \quad\text{and}\quad {\Delta}_{j}u\triangleq\varphi(2^{-j}D)u\qquad\forall\,j\in\mathbb{Z}.
\end{equation*}
We denoted by  $\mathcal{S}_{h}'(\mathbb R^3)$ the space of tempered distributions $u$ such that
\begin{equation*}
    \lim_{j\rightarrow-\infty}\dot{S}_{j}u=0\quad \text{in} \quad \mathcal{S}'(\mathbb R^3).
\end{equation*}
 The \emph{homogeneous Littlewood-Paley decomposition}  can be written as
 \begin{equation*}
    u=\sum_{j\in\mathbb Z}{\Delta}_{j}u,\quad \text{in}\quad\mathcal{S}'_h(\mathbb{R}^{3})
\end{equation*}

\begin{defn}\label{def2.2}
Assume that  $s\in \mathbb{R}$, $(p,q)\in [1,\infty]^{2}$ and $u\in \mathcal{S}'(\mathbb{R}^{3})$. Then we  define the \emph{ homogeneous Besov spaces} as
\begin{equation*}
   \dot{B}^{s}_{p,q}(\mathbb{R}^{3})\triangleq\big\{u\in\mathcal{S}'(\mathbb{R}^{3})\big|\,\,\|u\|_{\dot{B}^{s}_{p,q}(\mathbb{R}^{3})}<+\infty\big\},
\end{equation*}
where
\begin{equation*}
  \|u\|_{{B}^{s}_{p,q}(\mathbb{R}^{3})}\triangleq\begin{cases}
    \Big( \displaystyle{\sum_{j\in\mathbb Z}}\,2^{jsq}\|{\Delta}_{j}u\|_{L^{p}(\mathbb{R}^{3})}^{q}\Big)^{\frac{1}{q}}
    \quad&\text{if}\quad q<+\infty,\\
   \displaystyle{\sup_{j\in\mathbb Z}}\,2^{js}\| {\Delta}_{j}u\|_{L^{p}(\mathbb{R}^{3})}\quad&\text{if}\quad q=+\infty.
    \end{cases}
\end{equation*}
\end{defn}
Before we conclude this section, we recall a useful  Sobolev embedding theorem:
\begin{lem}[\cite{Ga}]\label{L2.8}
Let $2\leq q\leq 2_{\alpha}^{*}=\frac{6}{3-2\alpha}$ and $u\in H^{\alpha}(\Omega)$, then
$$
\|u\|_{L^{q}(\Omega)}\leq C\|u\|_{H^{\alpha}(\Omega)},\quad  \;\; \Omega\subset \mathbb R^3\;\;{or }\;\; \Omega=\mathbb R^3.
$$
If $2\leq q< 2_{\alpha}^{*}$, and $\Omega\subset \R^{n}$ is a bounded domain, then we have the following compact embedding
$$H_{0}^{\alpha}(\Omega)\hookrightarrow\hookrightarrow L^q(\Omega),$$
in other words,  for every  bounded sequence $\{u_{k}\}\subset H_{0}^{\alpha}(\Omega)$, there exists a converging subsequence, still denote by  $\{u_{k}\}$, such that
$$ \lim_{k\rightarrow+\infty} u_k(x)=u(x)\in L^{q}(\Omega).$$
\end{lem}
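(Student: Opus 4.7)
The plan is to split Lemma~\ref{L2.8} into the two claims (continuous embedding and compact embedding) and treat each by a Fourier-analytic argument combined, for the compactness part, with Fréchet--Kolmogorov.

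For the continuous embedding I first handle $\Omega=\R^3$. By the Fourier-side characterization of $H^\alpha(\R^3)$ recalled in this section, $u\in H^\alpha(\R^3)$ iff $(I-\Delta)^{\alpha/2}u\in L^2(\R^3)$, so it suffices to prove that the Bessel potential $(I-\Delta)^{-\alpha/2}$ maps $L^2(\R^3)$ into $L^{2_\alpha^*}(\R^3)$ with $2_\alpha^*=\tfrac{6}{3-2\alpha}$. Since the Bessel kernel behaves like the Riesz kernel $|x|^{-(3-2\alpha)}$ near the origin and decays exponentially at infinity, this reduces to the Hardy--Littlewood--Sobolev bound for the Riesz potential $I_{2\alpha}:L^2\to L^{2_\alpha^*}$. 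Interpolating this endpoint estimate with the trivial bound $\|u\|_{L^2}\le \|u\|_{H^\alpha}$ covers the full range $2\le q\le 2_\alpha^*$. For $\Omega\subset\R^3$ with Lipschitz boundary (and in particular for any smooth bounded $\Omega$), I invoke the bounded extension operator $E:H^\alpha(\Omega)\to H^\alpha(\R^3)$ stated in the preliminaries and restrict.

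For the compact embedding I use the Fréchet--Kolmogorov criterion. Let $\{u_k\}\subset H^\alpha_0(\Omega)$ be bounded with $\Omega$ bounded; extending by zero gives a sequence, still denoted $\{u_k\}$, bounded in $\dot H^\alpha(\R^3)$ with common compact support $K\Subset\R^3$. The embedding $\dot H^\alpha(\R^3)\hookrightarrow L^{2_\alpha^*}(\R^3)$ together with Hölder on $K$ yields a uniform $L^2(\R^3)$-bound, so $\{u_k\}$ is bounded in $H^\alpha(\R^3)$ with common support $K$. The key step is the Fourier-side control of translations: for any $h\in\R^3$,
\begin{equation*}
\|u_k(\cdot+h)-u_k(\cdot)\|_{L^2(\R^3)}^2
=\int_{\R^3}|e^{ih\cdot\xi}-1|^2\,|\widehat{u_k}(\xi)|^2\,\mathrm{d}\xi
\leq C\,|h|^{2\alpha}\,\|u_k\|_{\dot H^\alpha(\R^3)}^2,
\end{equation*}
where I used $|e^{ih\cdot\xi}-1|^2\le C\min(1,|h|^{2\alpha}|\xi|^{2\alpha})$. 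Together with tightness (automatic from the common compact support $K$) and the uniform $L^2$-bound, Fréchet--Kolmogorov gives precompactness of $\{u_k\}$ in $L^2(\R^3)$. To upgrade to $L^q$ for $q<2_\alpha^*$, I interpolate: write
\begin{equation*}
\|u_k-u_j\|_{L^q(K)}\leq \|u_k-u_j\|_{L^2(K)}^{1-\theta}\,\|u_k-u_j\|_{L^{2_\alpha^*}(K)}^{\theta},
\end{equation*}
with $\theta\in[0,1)$ determined by $\tfrac1q=\tfrac{1-\theta}{2}+\tfrac{\theta}{2_\alpha^*}$; the $L^2$ factor is Cauchy while the $L^{2_\alpha^*}$ factor is uniformly bounded by the first part.

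The main obstacle is the compactness claim, since the standard Rellich argument for $H^1$ does not transfer verbatim to fractional exponents. The appropriate substitute is the Fourier translation bound displayed above, which uses the nonlocal Gagliardo seminorm encoded in $\dot H^\alpha$ in an essential way and replaces the gradient estimate used in the classical integer-order proof.
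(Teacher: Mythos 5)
The paper does not actually prove this lemma: it is quoted verbatim from the reference \cite{Ga} as a known Sobolev embedding/compactness result, so there is no in-paper argument to compare yours against. Your proof is correct and is the standard self-contained route. For the continuous embedding, reducing $(I-\Delta)^{-\alpha/2}:L^{2}\to L^{2_\alpha^{*}}$ to the Hardy--Littlewood--Sobolev bound for the Riesz potential and then interpolating with the trivial $L^{2}$ bound is sound, and you are right to flag that passing to a subdomain requires the bounded extension operator, i.e.\ a Lipschitz (extension) domain --- the lemma as stated in the paper is loose on this point, since for an arbitrary open $\Omega$ the intrinsic Gagliardo norm need not control $\|u\|_{L^{2_\alpha^{*}}(\Omega)}$ uniformly. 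For the compactness, the chain (zero extension using the definition of $H^{\alpha}_{0}(\Omega)$ as a completion under the $\dot H^{\alpha}(\R^{3})$ seminorm, uniform $L^{2}$ bound via H\"older on the common compact support, the Plancherel translation estimate $|e^{ih\cdot\xi}-1|^{2}\le C\min(1,|h|^{2}|\xi|^{2})\le C|h|^{2\alpha}|\xi|^{2\alpha}$ for $\alpha\le1$, Fr\'echet--Kolmogorov, and interpolation with the uniformly bounded $L^{2_\alpha^{*}}$ norm to reach all $q<2_\alpha^{*}$) is complete and each step is justified. The only alternative "textbook" route would be Rellich--Kondrachov via a covering/mollification argument on the Gagliardo seminorm; your Fourier-side version is cleaner given that the paper already works with the $|\xi|^{\alpha}\mathcal{F}(u)$ characterization of $H^{\alpha}(\R^{3})$.
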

\begin{lem}[\cite{LGrafakos,O}]\label{GHY}

\begin{enumerate}
	\item[\rm(i)]  Let $1<p,q,r<\infty$, $0< s_1,s_2\leq\infty,$ $\frac1p+\frac1q=\frac1r+1$, and $\frac{1}{s_1}+\frac{1}{s_2}=\frac1s.$ Then there holds
               \[\|f\ast g\|_{L^{r,s}(\R^3)}\leq C(p,q,s_1,s_2)\|f\|_{L^{p,s_1}(\R^3)}\|g\|_{L^{q,s_2}(\R^3)}.\]
               \item [\rm (ii)]\;Let $0<p,q,r\leq\infty$, $0< s_1,s_2\leq\infty,$ $\frac1p+\frac1q=\frac1r$, and $\frac{1}{s_1}+\frac{1}{s_2}=\frac1s.$ Then we have the H\"older inequality for Lorentz spaces
                   \[\|fg\|_{L^{r,s}(\R^3)}\leq C(p,q,s_1,s_2)\|f\|_{L^{p,s_1}(\R^3)}\|g\|_{L^{q,s_2}(\R^3)}.\]
             \end{enumerate}

\end{lem}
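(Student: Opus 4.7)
Both assertions are classical: (i) is O'Neil's convolution inequality and (ii) is the Hunt--O'Neil Hölder inequality on Lorentz spaces. The plan is to reduce everything to one-dimensional estimates for decreasing rearrangements on $(\R_+,\,dt/t)$, using the equivalent quasi-norm
$$\|f\|_{L^{p,s}(\R^3)} \sim \Bigl(\int_0^\infty \bigl(t^{1/p} f^*(t)\bigr)^s\,\frac{dt}{t}\Bigr)^{1/s}$$
(valid for $1<p<\infty$, $1\leq s<\infty$, with the obvious modification when $s=\infty$). Standard properties of $f^*$ I will use are the sub-additivity bound $(fg)^*(t_1+t_2)\leq f^*(t_1)g^*(t_2)$ and, for convolutions, O'Neil's inequality
$$(f\ast g)^{**}(t)\ \leq\ t\,f^{**}(t)\,g^{**}(t)\ +\ \int_t^\infty f^*(u)\,g^*(u)\,du,$$
where $f^{**}(t)=\frac1t\int_0^t f^*(u)\,du$.

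For part (ii), taking $t_1=t_2=t/2$ in the sub-additivity bound yields $(fg)^*(t)\leq f^*(t/2)g^*(t/2)$. Multiplying by $t^{1/r}$ and using $\frac{1}{r}=\frac{1}{p}+\frac{1}{q}$ gives
$$t^{1/r}(fg)^*(t)\ \leq\ \bigl(t/2\bigr)^{1/p}f^*(t/2)\cdot\bigl(t/2\bigr)^{1/q}g^*(t/2)\cdot 2^{1/r}.$$
Applying the ordinary Hölder inequality on $L^s\bigl(\R_+,\,dt/t\bigr)$ with the split $\frac{1}{s_1}+\frac{1}{s_2}=\frac{1}{s}$ and changing variables $t\mapsto 2t$ then produces the claimed bound.

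For part (i), I split the two O'Neil terms. The first term $t\,f^{**}(t)g^{**}(t)$ multiplied by $t^{1/r}$ factors as $\bigl(t^{1/p'+1}f^{**}(t)\bigr)^{1/\ldots}\cdots$; more concretely, using $\frac{1}{p}+\frac{1}{q}=\frac{1}{r}+1$ one writes $t\cdot t^{1/r}=t^{1/p}\cdot t^{1/q}$ and controls $t^{1/p}f^{**}(t)$ in $L^{s_1}(dt/t)$ by $\|f\|_{L^{p,s_1}}$ via Hardy's inequality (here $p>1$ is essential), with the analogous estimate for $g$. For the second term, set $h(t)=\int_t^\infty f^*(u)g^*(u)\,du$; integration by parts plus Hardy's inequality reduce the $L^s(dt/t)$ norm of $t^{1/r}h(t)$ to that of $t^{1/r+1}f^*(t)g^*(t)=t^{1/p}f^*(t)\cdot t^{1/q}g^*(t)$, which is handled by Hölder exactly as in part (ii). Since $(f\ast g)^*\leq (f\ast g)^{**}$, the bound on $f\ast g$ in $L^{r,s}$ follows.

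The main technical obstacle is the Hardy-type passage from $f^{**}$ to $f^*$: it forces $p,q>1$ and must be carried out carefully when some $s_j$ equals $\infty$ so as not to violate the Hardy range. An alternative, possibly cleaner route is bilinear real interpolation: once the inequalities are established at Lebesgue endpoints (classical Young and Hölder), Lions--Peetre interpolation in both arguments yields the Lorentz statements with the stated index arithmetic. Since the result is already attributed to Grafakos~\cite{LGrafakos} and O'Neil~\cite{O} in the excerpt, any of these routes suffices and the inequality can be used as a black box in the sequel.
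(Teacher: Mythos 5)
The paper never proves this lemma: it is imported as a known result, cited to Grafakos's book and to \cite{O} (which in the bibliography is actually Oseen's 1927 monograph, almost certainly a mis-citation of R.~O'Neil's 1963 paper on convolution in Lorentz spaces). So there is no in-paper argument to match yours against; the only question is whether your sketch is sound, and in outline it is. For (ii), the chain $(fg)^*(t)\leq f^*(t/2)g^*(t/2)$, the splitting $t^{1/r}=t^{1/p}\,t^{1/q}$, H\"older on $L^s(\R_+,\,dt/t)$, and dilation invariance of $dt/t$ is exactly the standard proof and works in the full quasi-norm range stated. For (i), your bookkeeping is correct: $t\cdot t^{1/r}=t^{1/p}\cdot t^{1/q}$ handles the term $t\,f^{**}(t)g^{**}(t)$ once one knows $\|t^{1/p}f^{**}\|_{L^{s_1}(dt/t)}\lesssim\|f\|_{L^{p,s_1}}$ (Hardy, which is where $p,q>1$ enters), and the tail term is reduced by the dual Hardy inequality (where $r<\infty$ enters) to $t^{1+1/r}f^*g^*=t^{1/p}f^*\cdot t^{1/q}g^*$, which H\"older handles as in (ii); the restrictions you flag are precisely the stated hypotheses. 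The one caveat is that your proof of (i) takes O'Neil's pointwise inequality $(f\ast g)^{**}(t)\leq t\,f^{**}(t)\,g^{**}(t)+\int_t^\infty f^*(u)g^*(u)\,du$ as a black box, and that inequality is essentially the substance of the theorem being proved; a genuinely self-contained treatment would either prove it (elementary but not short) or follow your alternative route of bilinear real interpolation from the Lebesgue endpoints. In practical terms the paper buys brevity by citation, while your route makes visible exactly where each index restriction ($p,q>1$, $r<\infty$, the $s_j$ arithmetic) is used, which the bare citation hides.
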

\begin{lem}[\cite{Tan04}]\label{lemma-young}
\begin{enumerate}
  \item[\rm(i)]\; Let $\varphi\in \mathcal{S}(\mathbb{R}^3)$, then there holds
\begin{equation*}
\big\|\varphi\ast f\|_{L^\infty(\R^3)}\leq C\|f\|_{1,\,1},\quad \forall\, f\in L^1_{\rm ul}(\R^3),
\end{equation*}
where $C$ is a positive constant independent of $f$.
  \item [\rm (ii)]\;If $m\geq1$, then
\begin{equation*}
\|f\|_{q,\,m\lambda}\leq (C m^{3})^{\frac1q}\|f\|_{q,\,\lambda},\quad\forall\,f\in L^q_{\rm ul}(\R^3)\quad \text{and}\quad \forall\,\lambda>0.
\end{equation*}
\end{enumerate}
\end{lem}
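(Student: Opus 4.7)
The plan is to prove the two parts separately, handling (i) by a dyadic decomposition exploiting the Schwartz decay of $\varphi$ together with the uniform local $L^1$ bound of $f$, and (ii) by a standard volume-counting covering argument that compares $B(x,m\lambda)$ to a controlled union of $\lambda$-balls.

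For part (i), fix $x\in\R^3$ and write $\R^3=\bigcup_{z\in\Z^3}Q_z$ where $Q_z=z+[0,1)^3$ is the unit cube at lattice point $z$. Then
\begin{equation*}
|(\varphi\ast f)(x)|\leq \sum_{z\in\Z^3}\int_{Q_z}|\varphi(x-y)||f(y)|\,{\rm d}y.
\end{equation*}
Since $\varphi\in\mathcal{S}(\R^3)$, for any $N$ we have $|\varphi(w)|\leq C_N(1+|w|)^{-N}$, and for $y\in Q_z$ one has $|x-y|\geq |x-z|-\sqrt{3}$, hence $\sup_{y\in Q_z}|\varphi(x-y)|\leq C_N\langle \dist(x,Q_z)\rangle^{-N}$. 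Moreover, $Q_z$ is contained in a ball of radius $\sqrt{3}/2<1$ centered at the center of $Q_z$, so $\int_{Q_z}|f|\,{\rm d}y\leq \|f\|_{1,\,1}$. Choosing $N=4$ (say) and using that the sum $\sum_{z\in\Z^3}\langle\dist(x,Q_z)\rangle^{-4}$ is bounded uniformly in $x$, one concludes $|(\varphi\ast f)(x)|\leq C\|f\|_{1,\,1}$ with $C$ independent of $x$ and $f$.

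For part (ii), fix $x\in\R^3$ and $\lambda>0$, and cover $B(x,m\lambda)$ by $\lambda$-balls: pick a maximal $\lambda$-separated collection of points $\{y_i\}_{i=1}^N\subset B(x,m\lambda)$. The balls $B(y_i,\lambda/2)$ are pairwise disjoint and contained in $B(x,m\lambda+\lambda/2)$, so by volume counting $N\leq (m\lambda+\lambda/2)^3/(\lambda/2)^3=(2m+1)^3\leq Cm^3$ for $m\geq 1$; by maximality $B(x,m\lambda)\subset\bigcup_{i=1}^N B(y_i,\lambda)$. Therefore
\begin{equation*}
\int_{|x-y|<m\lambda}|f(y)|^q\,{\rm d}y\leq \sum_{i=1}^N\int_{|y_i-y|<\lambda}|f(y)|^q\,{\rm d}y\leq Cm^3\|f\|_{q,\,\lambda}^{\,q},
\end{equation*}
and taking the supremum over $x$ followed by the $q$-th root yields the desired inequality.

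Neither step is genuinely delicate: the only point that requires care is verifying that the lattice sum $\sum_{z}\langle\dist(x,Q_z)\rangle^{-N}$ in part (i) is bounded uniformly in $x$ (which is immediate once $N>3$ by comparing with the integral $\int_{\R^3}\langle y\rangle^{-N}\,{\rm d}y$), and, for part (ii), keeping track of the correct exponent on $m$ in the volume count — since the ambient dimension is $3$, each ball of radius $\lambda/2$ has volume proportional to $\lambda^3$ and the enclosing ball has volume proportional to $(m\lambda)^3$, giving the factor $m^3$. No deep tool beyond the Schwartz decay of $\varphi$ and a volume covering is needed, so this lemma should follow cleanly from these two elementary arguments.
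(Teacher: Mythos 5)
Your proposal is correct, but there is nothing in the paper to compare it against: Lemma \ref{lemma-young} is imported wholesale from Taniuchi \cite{Tan04}, and the paper gives no proof of either part. What you have produced is therefore a self-contained substitute for the citation, and both halves hold up under scrutiny. In part (i), the unit-cube decomposition is sound: each cube $Q_z$ sits inside a ball of radius $\sqrt{3}/2<1$, so $\int_{Q_z}|f|\leq\|f\|_{1,\,1}$, and the lattice sum $\sum_{z\in\Z^3}\langle\dist(x,Q_z)\rangle^{-N}$ with $N=4>3$ is indeed uniformly bounded in $x$ (comparing each summand with $\int_{Q_z}\langle x-y\rangle^{-N}\,\mathrm{d}y$, which is legitimate because $\langle x-y\rangle\leq C\langle\dist(x,Q_z)\rangle$ on $Q_z$). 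In part (ii), the maximal $\lambda$-separated set argument is the standard covering lemma: disjointness of the half-radius balls plus containment in $B(x,m\lambda+\lambda/2)$ gives $N\leq(2m+1)^3\leq 27m^3$ for $m\geq1$, maximality gives the covering $B(x,m\lambda)\subset\bigcup_iB(y_i,\lambda)$, and the exponent $\frac{1}{q}$ in the statement emerges exactly as you describe, by bounding the integral over the big ball by $N$ copies of the sup over $\lambda$-balls and taking the $q$-th root. The only stylistic remark is that in (i) the intermediate inequality $|x-y|\geq|x-z|-\sqrt{3}$ is a detour; the bound $|x-y|\geq\dist(x,Q_z)$ for $y\in Q_z$ gives the needed estimate directly. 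Either way, your two elementary arguments fully justify the lemma that the paper only cites.
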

Lastly, we show some properties of solutions to the stationary Euler system, which  are the key point of the blowup argument.
 \begin{lem}\label{L3.1}
Let $\Omega$ be a connected domain in $\R^{3}$ with  Lipschitz boundary, and the functions $v\in H(\Omega)$ and $p\in D^{1,\frac{3}{2}}(\Omega)$
satisfy the stationary Euler system
\begin{equation}\label{E3.1}
\left \{
\begin{array}{ll}
v\cdot\nabla \,v+\nabla p=0 \qquad\ \ \ &\text{in}\ \ \Omega,\\
{\rm div}\ v=0 \ \ \ &\text{in}\ \ \Omega,\\
v=0 \ \ \ &\text{on}\ \ \partial\Omega.
\end{array}
\right.
\end{equation}
Then
$$
\exists\  c\in \R\ \mbox{such that}\ p(x)\equiv c\ \ \mbox{for}\ \ \mathfrak{H}^{2}\mbox{-almost all }\ \ x\in\partial\Omega,
$$
where $\mathfrak{H}^{m}$ is denoted by the $m$-dimensional Hausdorff measure.
\end{lem}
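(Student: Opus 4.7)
The plan is to exploit the classical Bernoulli-function identity associated with the stationary Euler system and then to transfer the information from the interior to the boundary via the zero boundary condition on $v$. Introduce the Bernoulli function
$$\Phi := \tfrac{1}{2}|v|^{2} + p.$$
Applying the well-known vector identity
$$v\cdot\nabla v = \nabla\!\left(\tfrac{1}{2}|v|^{2}\right) - v\times\mathrm{curl}\,v$$
to the first equation of \eqref{E3.1} rewrites the momentum equation (in $\mathcal{D}'(\Omega)$) as
$$\nabla\Phi = v\times\mathrm{curl}\,v.$$
Note that $\Phi\in D^{1,3/2}(\Omega)$: indeed $v\in D_{0}^{1,2}(\Omega)\hookrightarrow L^{6}(\Omega)$ by the Sobolev embedding in Lemma~\ref{L2.8}, so $\nabla(|v|^{2}/2)=v\cdot\nabla v\in L^{3/2}(\Omega)$, and $\nabla p\in L^{3/2}(\Omega)$ by assumption. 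Dotting the displayed identity with $v$ and using $v\perp(v\times\mathrm{curl}\,v)$ yields the \emph{weak Bernoulli law}
$$v\cdot\nabla\Phi = 0 \qquad\text{a.e. in }\Omega,$$
which expresses that $\Phi$ is constant along the streamlines of $v$.

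Because $v\in H(\Omega)\subset D_{0}^{1,2}(\Omega)$ has vanishing trace on $\partial\Omega$, the $L^{3/2}$-trace of $|v|^{2}/2$ on $\partial\Omega$ is zero, and hence
$$\Phi\big|_{\partial\Omega} = p\big|_{\partial\Omega}\qquad\mathfrak{H}^{2}\text{-a.e.\ on }\partial\Omega.$$
Consequently, proving the lemma reduces to showing that the boundary trace $\Phi|_{\partial\Omega}$ is $\mathfrak{H}^{2}$-a.e.\ equal to a single constant.

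For this last reduction, I would invoke a Morse--Sard / Kronrod-type decomposition adapted to the Sobolev function $\Phi\in D^{1,3/2}(\Omega)$: for $\mathcal{L}^{1}$-almost every level $t$ in the essential range of $\Phi$, the level set $\{\Phi=t\}$ is a rectifiable $2$-surface, and the weak Bernoulli law together with $\mathrm{div}\,v=0$ implies that $v$ is tangential to these level surfaces. Combining this structural information with the hypothesis that $\Omega$ is \emph{connected}, with the vanishing of $v$ on $\partial\Omega$, and with a contradiction argument in the spirit of the Korobkov--Pileckas--Russo analysis (if $\Phi$ took two different essential values on $\partial\Omega$, the region bounded by the corresponding level surfaces would violate either the flux relation produced by $\mathrm{div}\,v=0$ or the continuity of $\Phi$ along almost every level set joining them), one concludes that there is a single constant $c\in\mathbb{R}$ with $p\equiv c$ $\mathfrak{H}^{2}$-a.e.\ on $\partial\Omega$. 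The main obstacle is precisely executing this last step rigorously at the available regularity: the Bernoulli identity and the trace identification in the first two paragraphs are essentially routine approximation arguments using $v\in L^{6}$ and $p\in L^{3}$, whereas the geometric measure-theoretic analysis of level sets for Sobolev (rather than $C^{1}$) functions, and the topological conclusion that forces a single boundary value, is the technical heart of the proof.
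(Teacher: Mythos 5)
Your first two steps are sound but carry essentially no content for this particular lemma: since $v$ has zero trace on $\partial\Omega$, the identity $\Phi|_{\partial\Omega}=p|_{\partial\Omega}$ is immediate, so introducing the Bernoulli function $\Phi=\tfrac12|v|^{2}+p$ merely restates the claim to be proven. The weak Bernoulli law $v\cdot\nabla\Phi=0$ is likewise correct but gives no information on $\partial\Omega$, where $v$ vanishes identically and there are no streamlines to propagate constancy along. The entire burden of the lemma therefore falls on your third paragraph, and that step, which you yourself flag as the ``technical heart,'' is a genuine gap: the Morse--Sard/Kronrod level-set machinery of Korobkov--Pileckas--Russo is not available at the regularity you have. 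Their analysis requires a scalar potential with two more derivatives than the velocity (a stream function in $W^{2,2}$ in the plane or in the axially symmetric reduction, where the Bourgain--Korobkov--Kristensen Morse--Sard theorem for $W^{2,1}(\R^{2})$ applies); for a genuinely three-dimensional field with only $\Phi\in D^{1,3/2}(\Omega)$ the level sets $\{\Phi=t\}$ are not rectifiable surfaces for a.e.\ $t$ --- indeed $\Phi$ need not even have a continuous representative --- and no flux or connectedness argument of the type you sketch can be run. Moreover, even in the Korobkov--Pileckas--Russo papers the constancy of the pressure on the boundary is \emph{not} a consequence of the Bernoulli law; it is a separate lemma proved by a direct local argument near the boundary, and the Bernoulli/level-set analysis is used afterwards for a different purpose.

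The paper's proof is exactly that direct local argument (going back to Amick). Fix $z_{0}\in\partial\Omega$, flatten the boundary as a Lipschitz graph $\tilde x_{3}=g(\tilde x_{1},\tilde x_{2})$, and work in the thin slab $\mathcal{A}$ of height $\delta$ above the graph. Since $v$ vanishes on the graph, the one-dimensional Hardy inequality in the normal direction gives
$$\Big\|\tfrac{v}{\tilde x_{3}-g}\Big\|_{L^{2}(\mathcal{A})}\leq 2\|\partial_{\eta}v\|_{L^{2}(\mathcal{A})},\qquad\text{hence}\qquad \int_{\mathcal{A}}\frac{|v\cdot\widetilde\nabla v|}{|\tilde x_{3}-g|}\,{\rm d}\tilde x\leq 2\|\nabla v\|_{L^{2}(\mathcal{A})}^{2}\to0\ \ \text{as}\ \delta\to0.$$
Averaging over the slab and using $\widetilde\nabla p=-v\cdot\widetilde\nabla v$ then shows that the tangential distributional derivatives of the boundary trace of $p$ vanish, so $p$ is locally constant, hence constant, on $\partial\Omega$. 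If you want to salvage your write-up, you should replace the level-set paragraph by this boundary-layer/Hardy argument; as it stands the proposal reduces the lemma to an unproved (and, at $W^{1,3/2}$ regularity in $\R^{3}$, unprovable by the cited technology) geometric statement.
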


\begin{proof}
The proof just follows the ideas developed in  \cite{Amick,ABLS}.  We present a proof in some detail of this lemma for the reader's convenience.

Let $z_{0}\in\partial\Omega$ and choose  a new orthogonal coordinate system $\tilde{x}=(\tilde{x}_{1},\tilde{x}_{2}, \tilde{x}_{3})$ centered at $z_{0}$
with the $x_{3}$-axis pointing along the inner normal to $\partial\Omega$ at $z_{0}$. Then $(v,p)$ satisfies
$$
v\cdot\widetilde{\nabla}v+\widetilde{\nabla}p=0\ \ \ \text{in }\ \Omega',
$$
where
$\widetilde{\nabla}\triangleq \big(\frac{\partial}{\partial\tilde{x}_{1}}, \frac{\partial}{\partial\tilde{x}_{2}}, \frac{\partial}{\partial\tilde{x}_{3}}\big)$
 and $\Omega'$ is the domain of $\Omega$ in the new coordinate system $\tilde{x}$.
For sufficient small $\epsilon>0$, the boundary component $\partial\Omega'$ is given locally by
$$
\tilde{x}_{3}=g(\tilde{x}_{1}, \tilde{x}_{2}),\ \  (\tilde{x}_{1},\tilde{x}_{2})\in \B_{\epsilon}(0)\subset \R^{2}, \ \ \mbox{with}\ g\in C^{0,1}\big(\B_{\epsilon}(0)\big).
$$
Let $\delta>0$ be sufficiently small such that
$$
\mathcal{A}=A(\epsilon,\delta)=\big\{(\tilde{x}_{1},\tilde{x}_{2}, \tilde{x}_{3}): \,\, (\tilde{x}_{1},\tilde{x}_{2})\in \B_{\epsilon}(0),\,\, \tilde{x}_{3}
\in (g(\tilde{x}_{1},\tilde{x}_{2}), g\big(\tilde{x}_{1},\tilde{x}_{2})+\delta\big)\big\}\subset \Omega'.
$$
One easily estimates
$$
\int_{\mathcal{A}}\frac{|v\cdot\widetilde{\nabla} v|}{|\tilde{x}_{3}-g(\tilde{x}_{1},\tilde{x}_{2})|}\,\mathrm{d}\tilde{x}\leq
 \big\|\widetilde{\nabla}u\big\|_{L^{2}(\mathcal{A})}\Big\|\frac{v}{\tilde{x}_{3}-g(\tilde{x}_{1},\tilde{x}_{2})}\Big\|_{L^{2}(\mathcal{A})}
 $$
and
\begin{align*}
&\int_{\mathcal{A}}\frac{v^{2}}{|\tilde{x}_{3}-g(\tilde{x}_{1},\tilde{x}_{2})|^{2}}\,{\rm d}\tilde{x}
=\int_{\B_{\epsilon}(0)}\,{\rm d}\tilde{x}'\int_{g(\tilde{x}_{1},\tilde{x}_{2})}^{g(\tilde{x}_{1},\tilde{x}_{2})+\delta}
\frac{v^{2}}{|\tilde{x}_{3}-g(\tilde{x}_{1},\tilde{x}_{2})|^{2}}\,{\rm d}\tilde{x}_{3}\\
\leq& \int_{\B_{\epsilon}(0)}\,\mathrm{d}\tilde{x}'\int_{g(\tilde{x}_{1},\tilde{x}_{2})}^{g(\tilde{x}_{1},\tilde{x}_{2})+\delta}
\frac{1}{|\tilde{x}_{3}-g(\tilde{x}_{1},\tilde{x}_{2})|^{2}}
\Big(\int_{g(\tilde{x}_{1},\tilde{x}_{2})}^{\tilde{x}_{3}}|\partial_{\eta}v(\tilde{x}',\eta)|\,{\rm d}\eta\Big)^{2}\,{\rm d}\tilde{x}_{3}
\leq 4\int_{\mathcal{A}}|\partial_{\eta}v(\tilde{x}',\eta)|^{2}\,{\rm d}\tilde{x},
\end{align*}
where  we have used the fact $u(\tilde{x}_{1},\tilde{x}_{2},\tilde{x}_{3})=0$ at $\tilde{x}_{3}=g(\tilde{x}_{1},\tilde{x}_{2})$ and the following  Hardy inequality that
$$
\int_{a}^{b}\Big(\frac{w(x)}{x-a}\Big)^{2}\,{\rm d}x\leq 4\int_{a}^{b}\big(w'(x)\big)^{2}\,{\rm d}x
$$
for all functions $w(x)\in C^{1}\big([a,b]\big )$ which vanish at $x=a$.
\medskip

For any scalar function $\phi(\tilde{x}_1,\tilde{x}_2)\in C_0^{\infty}(\B_{\epsilon}(0))$ and  $g(\tilde{x}_1,\tilde{x}_2)\in W^{1,\infty}(\B_{\epsilon}(0))$, we have that for $i=1,2$
\begin{align*}
\int_{\mathcal{A}}\partial_{\tilde{x_i}}  \phi \,p(\tilde{x}_1,\tilde{x}_2, \tilde{x}_3)\,{\rm d}\tilde{x}_1{\rm d}\tilde{x}_2{\rm d}x'_3
=&\int_{\B_{\epsilon}(0)}\int_0^\delta\partial_{\tilde{x_i}}\phi\, p\big(\tilde{x}_1,\tilde{x}_2, g(\tilde{x}_1,\tilde{x}_2)+x'_3\big)\,{\rm d}\tilde{x}_1{\rm d}\tilde{x}_2{\rm d}x'_3\\
=&\int_{\B_{\epsilon}(0)}\int_0^\delta\phi(\tilde{x}_1,\tilde{x}_2) \Big({\partial p\over \partial \tilde{x}_i}+{\partial p\over \partial \tilde{x}_3}{\partial g\over \partial \tilde{x}_i}\Big){\rm d}\tilde{x}_1{\rm d}\tilde{x}_2{\rm d}x'_3\\
=&\int_{\mathcal{A}}\phi(\tilde{x}_1,\tilde{x}_2)\Big ({\partial p\over \partial \tilde{x}_i}+{\partial p\over \partial \tilde{x}_3}{\partial g\over \partial \tilde{x}_i}\Big)\,{\rm d}\tilde{x}.
\end{align*}
By integrating by parts and the Lebesgue theorem, we have that for $i=1,2$
\begin{equation*}
\begin{split}
 \bigg|\int_{\B_{\epsilon}(0)}\partial_{\tilde{x_i}}  \phi \,p\big(\tilde{x}_1,\tilde{x}_2, g(\tilde{x}_1, \tilde{x}_2)\big)\,{\rm d}\tilde{x}_1{\rm d}\tilde{x}_2\bigg|
 =&\bigg|\lim_{\delta\to 0}{1\over \delta}\int_{\mathcal{A}}\partial_{\tilde{x_i}}  \phi \,p\big(\tilde{x}_1,\tilde{x}_2, \tilde{x_3}\big)\,{\rm d}\tilde{x}\bigg|\\
 =&\bigg|\lim_{\delta\to 0}{1\over\delta}\int_{\mathcal{A}}\phi(\tilde{x}_1,\tilde{x}_2)\Big ({\partial p\over \partial \tilde{x}_i}+{\partial p\over \partial \tilde{x}_3}{\partial g\over \partial \tilde{x}_i}\Big)\,\mathrm{d}\tilde{x}\bigg|\\
 \leq&\|g\|_{W^{1,\infty}}\lim_{\delta\to 0}{1\over\delta}\int_{\mathcal{A}}|\phi(\tilde{x}_1,\tilde{x}_2)|\,|\widetilde{\nabla}p|\,{\rm d}\tilde{x}\\
 \leq& C\lim_{\delta\to 0}\int_{\mathcal{A}}{\big|u\cdot\widetilde{\nabla}u\big|\over |\tilde{x}_3-g(\tilde{x}_1,\tilde{x}_2)|}\,{\rm d}\tilde{x}=0.
\end{split}
\end{equation*}
This estimate implies that for $i=1,2$
\begin{equation*}
\begin{split}
\int_{\B_{\epsilon}(0)}\partial_{\tilde{x_i}}  \phi \,p\big(\tilde{x}_1,\tilde{x}_2, g(\tilde{x}_1, \tilde{x}_2)\big)\,{\rm d}\tilde{x}_1{\rm d}\tilde{x}_2=-\int_{\B_{\epsilon}(0)}\phi\partial_{\tilde{x}_i}\big(p(\tilde{x_1},\tilde{x}_2)\big)\,{\rm d}\tilde{x}_1{\rm d}\tilde{x}_2=0.
\end{split}
\end{equation*}
Thus, arbitrariness of $\phi$, enables us to conclude that $p$ is a constant on $\partial\Omega$ almost everywhere.
\end{proof}

 \subsection{Solution of the linear elliptic equation and properties of solution for the linear fractional diffusion equations}
In this subsection, we first focus on the following linear equation
\begin{equation}\label{EL}
-\Delta U-\frac12\big(x\cdot\nabla U+U\big)=f(x)  \quad \text{in}\quad\R^3.
\end{equation}
In order to find the solution of \eqref{EL}, we define inspired by the homogeneous principle that
\begin{equation}\label{defU}
\begin{split}
U(x)\triangleq&\int_0^1\frac{1}{\big(4\pi s\big)^{\frac32 }}\int_{\R^3}e^{-\frac{|y|^2}{4(1-s)}}\frac{1}{(1-s)^{\frac32}}f\left(\frac{x-y}{\sqrt{1-s}}\right)\,\mathrm{d}y\mathrm{d}s\\
=&\int_0^1\int_{\R^3}\Phi(y,s)\frac{1}{(1-s)^{\frac32}}f\left(\frac{x-y}{\sqrt{1-s}}\right)\,\mathrm{d}y\mathrm{d}s.
\end{split}
\end{equation}
Here and what in follows, we denote
\[\Phi(x,t)=\frac{1}{(4\pi t)^{\frac32}}e^{-\frac{|x|^2}{4t}}\quad\text{for all}\quad(x,t)\in\R^3\times(0,+\infty).\]
Setting $\widetilde{f}(x,t)=\frac{1}{t^{\frac32}}f\big(\frac{x}{\sqrt{t}}\big),$
one has
\[U(x)=\int_0^1\int_{\R^3}\Phi(y,s)\widetilde{f}(x-y,1-s)\,\mathrm{d}y\mathrm{d}s.\]
Consequently this convolution should be solution of equation \eqref{EL}.
\begin{prop}\label{prop-E}
	Let $U$ be defined in \eqref{defU}, and $f\in C^2(\R^3)$ satisfying
	\begin{equation}\label{con}
	\sup_{x\in\R^3}|x|^3|f|(x)<+\infty.
	\end{equation}
	Then
	$U\in C^2(\R^3)$ solves the linear elliptic equation
	$-\Delta U-\frac12\big(x\cdot\nabla U+U\big)=f(x) $ in  $\R^3.$
\end{prop}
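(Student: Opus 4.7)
The plan is to interpret $U(x)$ as $u(x,1)$ for the Duhamel solution $u$ of the inhomogeneous heat equation with source $\widetilde f$, and to read off the elliptic identity from the self-similar scaling of $u$. Set
\[
u(x,t)\triangleq\int_0^t\int_{\R^3}\Phi(x-y,t-s)\widetilde f(y,s)\,\mathrm{d}y\mathrm{d}s,\qquad(x,t)\in\R^3\times(0,+\infty).
\]
Changing variables $(y,s)\mapsto(x-y,1-s)$ in \eqref{defU} identifies $U(x)=u(x,1)$, while the rescaling $(y,s)\mapsto(\lambda y,\lambda^2 s)$, combined with the homogeneity $\widetilde f(\lambda y,\lambda^2 s)=\lambda^{-3}\widetilde f(y,s)$, gives the parabolic self-similarity $u(x,t)=t^{-1/2}U(x/\sqrt t)$ with the same profile $U$. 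Once $u$ is known to solve $\partial_t u-\Delta u=\widetilde f$ in the classical sense, inserting this ansatz into the heat equation and simplifying via the chain rule produces
\[
t^{-3/2}\bigl(-\Delta U(y)-\tfrac{1}{2}\,y\cdot\nabla U(y)-\tfrac{1}{2}\,U(y)\bigr)=t^{-3/2}f(y),\qquad y=x/\sqrt t,
\]
so evaluating at $t=1$ yields the target elliptic equation.

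What remains is the rigorous verification that $u\in C^2_x$ on $\R^3\times(0,+\infty)$ and that the heat equation holds pointwise. From \eqref{con} and the continuity of $f$ I will first extract the global bound $|f(y)|\le C(1+|y|^{3})^{-1}$, and hence
\[
|\widetilde f(y,s)|\le\frac{C}{s^{3/2}+|y|^3}.
\]
At the representative point $t=1$ I will split the $s$-integral at $s=1/2$. On $[0,1/2]$ the kernel $\Phi(x-y,1-s)$ is smooth and bounded uniformly in $s$, so every $x$-derivative can be moved onto $\widetilde f$ (which is $C^2$ in $x$ thanks to $f\in C^2$), and the rescaling $y=\sqrt s\,z$ reduces the spatial integral to $\int_{\R^3}(1+|z|^3)^{-1}\,\mathrm{d}z<\infty$. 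On $[1/2,1]$ the roles reverse: I keep the $x$-derivatives on $\Phi$, pick up harmless $(1-s)^{-1/2}$ or $(1-s)^{-1}$ factors from differentiating the heat kernel, and rescale $y=\sqrt{1-s}\,z$ to turn $(1-s)^{3/2}+|y|^3$ into $(1-s)^{3/2}(1+|z|^3)$. Both sub-integrals then converge absolutely and uniformly in $x$ on compacta, so $U\in C^2(\R^3)$. The pointwise identity $\partial_t u-\Delta u=\widetilde f$ then follows from the classical Duhamel argument: the interior of the $s$-integral contributes $\Delta u$ via $(\partial_t-\Delta)\Phi=0$, while the boundary contribution from $s\to t^-$ equals $\widetilde f(x,t)$ since $\Phi(\cdot,\tau)\to\delta_0$ in $\mathcal{S}'(\R^3)$ as $\tau\to 0^+$ and $\widetilde f(\cdot,t)$ is continuous.

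I expect the main technical obstacle to be the bookkeeping of the singular factor $s^{-3/2}$ of $\widetilde f$ near $s=0$, combined with the absence of any a priori decay on the derivatives of $f$. Fixing $t=1$ confines $s$ to $[0,1]$ where that singularity is integrable once paired with the spatial decay via the rescaling $y=\sqrt s\,z$, but one must be careful to place the $x$-derivatives on whichever factor is smoother in each sub-range of $s$. Once this split is handled cleanly, every remaining step is standard heat-kernel analysis combined with the self-similar change of variables.
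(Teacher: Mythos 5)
Your high-level route is genuinely different from the paper's: the paper verifies the profile equation by manipulating the convolution directly (integrating by parts in $y$ and $s$, using the identities $\tfrac12 y\cdot\nabla_y\Phi=-s\Delta_y\Phi-\tfrac32\Phi$ and $(\partial_s-\Delta_y)\Phi=0$, and reading off the boundary contributions at $s=0$ and $s=1$), whereas you identify $U$ with the $t=1$ slice of the Duhamel solution, prove $u(x,t)=t^{-1/2}U(x/\sqrt t)$ from the joint homogeneity of $\Phi$ and $\widetilde f$, and transfer $\partial_t u-\Delta u=\widetilde f$ to the profile. That scaling computation is correct and is arguably the cleaner way to see where the drift term $\tfrac12(x\cdot\nabla U+U)$ comes from.

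The technical verification, however, contains steps that fail as written. First, $\int_{\R^3}(1+|z|^3)^{-1}\,\mathrm{d}z$ is \emph{not} finite: in spherical coordinates it is $4\pi\int_0^\infty r^2(1+r^3)^{-1}\,\mathrm{d}r$, which diverges logarithmically. The decay $|f(z)|\le C(1+|z|^3)^{-1}$ coming from \eqref{con} is exactly borderline and $f\notin L^1(\R^3)$ in general; convergence of the $s\le 1/2$ piece must instead come from pairing $\widetilde f(\cdot,s)$ with the Gaussian tail of $\Phi(\cdot,1-s)$, which yields a bound of order $1+\log(1/s)$, integrable on $(0,1/2)$. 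Second, your split is oriented the wrong way. On $s\in[0,1/2]$, moving both $x$-derivatives onto $\widetilde f$ produces $\nabla^2_y\widetilde f(y,s)=s^{-5/2}(\nabla^2 f)(y/\sqrt s)$; even after the rescaling $y=\sqrt s\,z$ a non-integrable power of $s$ survives, and the hypotheses give no decay (or even global boundedness) of $\nabla^2 f$, so this integral need not converge at all. On $s\in[1/2,1]$, keeping both derivatives on $\Phi$ costs $\|\nabla_x^2\Phi(\cdot,1-s)\|_{L^1}\sim(1-s)^{-1}$, which is \emph{not} harmless since $\int_{1/2}^1(1-s)^{-1}\,\mathrm{d}s=+\infty$. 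The workable division is the opposite one: keep all derivatives on the uniformly smooth, Gaussian-decaying kernel when $1-s\ge\tfrac12$, and near $s=1$ place at least one derivative on $\widetilde f(\cdot,s)$, which for $s\ge\tfrac12$ is as smooth as $f$ itself; even then some control of $\nabla f$ and $\nabla^2 f$ at infinity is needed that the stated hypotheses do not supply. (The paper is equally silent on this point, asserting $U\in C^2$ without proof; in the intended application $f$ is smooth with decaying derivatives, so the issue is harmless there.) With the split reversed and the false integrability claim replaced by the Gaussian-cutoff argument, your derivation goes through under the same implicit assumptions the paper uses.
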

\begin{proof}
	According to the representation \eqref{defU}, it is easy to show that $U\in C^2(\R^3)$. So we need to show that $u(x,t)$ is the solution of equation \eqref{EL}.
We compute
\begin{equation}\label{eq-II-1}
-\Delta U=\int_0^1\int_{\R^3}\big(-\Delta_y\big)\Phi(y,s)\widetilde{f}(x-y,1-s)\,\mathrm{d}y\mathrm{d}s
\end{equation}
and
\begin{equation}\label{eq-II-2}
\begin{split}
-\frac12\left(x\cdot\nabla U+U\right)
=&\int_0^1\int_{\R^3}\Phi(y,s)\left(-\frac12x\cdot\nabla_x-\frac12\right) \widetilde{f}(x-y,1-s)\,\mathrm{d}y\mathrm{d}s\\
=&-\frac12\int_0^1\int_{\R^3}\Phi(y,s)\big((x-y)\cdot\nabla_x\big) \widetilde{f}(x-y,1-s)\,\mathrm{d}y\mathrm{d}s\\
&-\frac12\int_0^1\int_{\R^3}\Phi(y,s)\big(y\cdot\nabla_x\big) \widetilde{f}(x-y,1-s)\,\mathrm{d}y\mathrm{d}s\\
&-\frac12\int_0^1\int_{\R^3}\Phi(y,s)\widetilde{f}(x-y,1-s)\,\mathrm{d}y\mathrm{d}s.
\end{split}
\end{equation}
Since
\begin{align*}
\frac12(x-y)\cdot\nabla_x\left(\widetilde{f}(x-y,1-s)\right)
=&\frac{(x-y)}{2(1-s)^2}\cdot\nabla f\left(\frac{x-y}{\sqrt{1-s}}\right)\\
=&(1-s)\partial_s\left(\widetilde{f}(x-y,1-s)\right)-\frac32\widetilde{f}(x-y,1-s)\\
=&\partial_s\left((1-s)\widetilde{f}(x-y,1-s)\right)-\frac12\widetilde{f}(x-y,1-s),
\end{align*}
we readily have
\begin{equation}\label{eq-II-3}
-\frac12(x-y)\cdot\nabla_x\left(\widetilde{f}(x-y,1-s)\right)-\frac12\widetilde{f}(x-y,1-s)=-\partial_s\left((1-s)\widetilde{f}(x-y,1-s)\right).
\end{equation}
On the other hand, we see that
\begin{equation}\label{eq-II-4}
\begin{split}
&-\frac12\int_0^1\int_{\R^3}\Phi(y,s)\big(y\cdot\nabla_x\big) \widetilde{f}(x-y,1-s)\,\mathrm{d}y\mathrm{d}s\\
=&\frac12\int_0^1\int_{\R^3}\Phi(y,s)\big(y\cdot\nabla_y\big) \widetilde{f}(x-y,1-s)\,\mathrm{d}y\mathrm{d}s\\
=&-\frac32\int_0^1\int_{\R^3}\Phi(y,s) \widetilde{f}(x-y,1-s)\,\mathrm{d}y\mathrm{d}s
 -\frac12\int_0^1\int_{\R^3}\big(y\cdot\nabla_y\big)\Phi(y,s) \widetilde{f}(x-y,1-s)\,\mathrm{d}y\mathrm{d}s.
\end{split}
\end{equation}
A simple calculation yields
\begin{equation}\label{eq-II-5}
\frac12\big(y\cdot\nabla_y\big)\Phi(y,s)=-\frac{1}{(4\pi s)^{\frac32}}\frac{|y|^2}{4s}e^{-\frac{|y|^2}{4s}}
=-s\Delta_y\Phi(y,s)-\frac{3}{2}\Phi(y,s).
\end{equation}
Plugging \eqref{eq-II-5} into \eqref{eq-II-4} leads to
\begin{equation}\label{eq-II-6}
\begin{split}
 -\frac12\int_0^1\int_{\R^3}\Phi(y,s)\big(y\cdot\nabla_x\big) \widetilde{f}(x-y,1-s)\,\mathrm{d}y\mathrm{d}s
= \int_0^1\int_{\R^3}\big(s\Delta_y\big)\Phi(y,s) \widetilde{f}(x-y,1-s)\,\mathrm{d}y\mathrm{d}s.
\end{split}
\end{equation}
Inserting \eqref{eq-II-3} and \eqref{eq-II-6} into \eqref{eq-II-2}, we readily have
\begin{equation}\label{eq-II-7}
\begin{split}
-\frac12\big(x\cdot\nabla U+U\big)
=&-\int_0^1\int_{\R^3}\Phi(y,s) \partial_s\left((1-s)\widetilde{f}(x-y,1-s)\right)\,\mathrm{d}y\mathrm{d}s\\
&+\int_0^1\int_{\R^3}\big(s\Delta_y\big)\Phi(y,s) \widetilde{f}(x-y,1-s)\,\mathrm{d}y\mathrm{d}s.
\end{split}
\end{equation}
With \eqref{eq-II-1} and \eqref{eq-II-7} in hand,  we find that
\begin{equation}\label{eq-II-8}
\begin{split}
-\Delta U-\frac12\big(x\cdot\nabla U+U\big)
=&-\int_0^1\int_{\R^3}\Phi(y,s) \partial_s\left((1-s)\widetilde{f}(x-y,1-s)\right)\,\mathrm{d}y\mathrm{d}s\\
&-\int_0^1\int_{\R^3}\big(\Delta_y\big)\Phi(y,s)\left((1-s) \widetilde{f}(x-y,1-s)\right)\,\mathrm{d}y\mathrm{d}s.
\end{split}
\end{equation}
Integrating by parts with respect to time $t$, we have
\begin{align*}
&-\int_0^1\int_{\R^3}\Phi(y,s) \partial_s\left((1-s)\widetilde{f}(x-y,1-s)\right)\,\mathrm{d}y\mathrm{d}s\\
=&\int_0^1\int_{\R^3}\big(\partial_s\big)\Phi(y,s) \left((1-s)\widetilde{f}(x-y,1-s)\right)\,\mathrm{d}y\mathrm{d}s\\
&-\lim_{s\to1-}\int_{\R^3}\Phi(y,s)\left((1-s)\widetilde{f}(x-y,1-s)\right)\,\mathrm{d}y\\
&+\lim_{s\to0+}\int_{\R^3}\Phi(y,s)\left((1-s)\widetilde{f}(x-y,1-s)\right)\,\mathrm{d}y.
\end{align*}
Plugging this estimate in \eqref{eq-II-8} gives
\begin{equation*}\label{eq-II-9}
\begin{split}
-\Delta U-\frac12\big(x\cdot\nabla U+U\big)
=&\int_0^1\int_{\R^3}\big(\partial_s-\Delta_y\big)\Phi(y,s)\left((1-s) \widetilde{f}(x-y,1-s)\right)\,\mathrm{d}y\mathrm{d}s\\
&-\lim_{s\to1-}\int_{\R^3}\Phi(y,s)\left((1-s)\widetilde{f}(x-y,1-s)\right)\,\mathrm{d}y\\
&+\lim_{s\to0+}\int_{\R^3}\Phi(y,s)\left((1-s)\widetilde{f}(x-y,1-s)\right)\,\mathrm{d}y\\
=&-\lim_{s\to1-}\int_{\R^3}\Phi(y,s)\left((1-s)\widetilde{f}(x-y,1-s)\right)\,\mathrm{d}y\\
&+\lim_{s\to0+}\int_{\R^3}\Phi(y,s)\left((1-s)\widetilde{f}(x-y,1-s)\right)\,\mathrm{d}y.
\end{split}
\end{equation*}
Thanks to the condition \eqref{con}, we know that
\[\lim_{s\to1-}\left((1-s)\widetilde{f}(x-y,1-s)\right)=0.\]
This enables us to conclude
\[\lim_{s\to1-}\int_{\R^3}\Phi(y,s)\left((1-s)\widetilde{f}(x-y,1-s)\right)\,\mathrm{d}y=0.\]
On the other hand, we observe that
\begin{align*}
\lim_{s\to0+}\int_{\R^3}\Phi(y,s)\left((1-s)\widetilde{f}(x-y,1-s)\right)\,\mathrm{d}y
=\int_{\R^3}\delta(y)\widetilde{f}(x-y,1)\,\mathrm{d}y=f(x).
\end{align*}
Therefore we finally obtain
\[-\Delta U-\frac12\big(x\cdot\nabla U+U\big)=f(x) \qquad \text{for all}\,\,\, x\in\R^3.\]
So we finish the proof of the proposition.
\end{proof}
Next, we will investigate some properties of the linear fractional diffusion equation.
Let $0<\alpha\le1$, and $u$ be the solution  to the fractional diffusion equation
\begin{align*}
\partial_{t}u+(-\Delta)^{\alpha}u&=f(x,t)\quad \mbox{in}\ \ \R^{3}\times (0,+\infty)\\
u(x,0)&=\varphi(x)\quad\,\,\,\;\mbox{in}\ \ \R^{3}
\end{align*}
where $f\in C_{0}^{\infty}\big(\R^{3}\times [0,+\infty)\big)$ and $\varphi(x)\in C_{0}^{\infty}(\R^{3})$.
\medskip

By Duhamel formula, one writes
\begin{equation}\label{E2.1}
u(x,t)=G^\alpha_{t}\ast \varphi+\int_{0}^{t}G^\alpha_{t-s}(x-y)f(s,y)\,{\rm d}s,\quad    (t,x)\in\R^+\times\R^{3},
\end{equation}
where
$$
G^\alpha_{t}(x)=\mathcal{F}^{-1}\left(e^{-t|\xi|^{2\alpha}}\right)\ \qquad \text{for all}\,\,\, t>0
$$
where $\mathcal{F}^{-1}$ denote the inverse Fourier transform. The function $G^\alpha_{t}$  is the probability density function of a spherically symmetric
$2\alpha$-stable process whose generator corresponds to  the fractional Laplacian $(-\Delta)^{\alpha}$:
$$
\int_{\R^{3}} G^\alpha_{t}(x)\,\mathrm{d}x=1\qquad \text{for all}\,\,\, t>0.
$$

\begin{lem}[\cite{BG,MYZ}] \label{L2.1}
\begin{enumerate}
  \item [(i)] For $(x,t)\in\R^n\times(0,+\infty)$, we have
$$
G^\alpha_{t}(x)=t^{-\frac{n}{2\alpha}}G^\alpha_{1}\Big(\frac{x}{t^{\frac{1}{2\alpha}}}\Big),
$$
where $G^\alpha_{1}(x)$ is a smooth strictly positive radial function on $\R^{n}$, and
$$
G^\alpha_{1}(x)=\left((2\pi)^{\frac{n}{2}}|x|^{\frac{n}{2}-1}\right)^{-1}\int_{0}^{\infty}e^{-t^{2\alpha}}t^{\frac{n}{2}}J_{\frac{n-2}{2}}\big(|x|t\big)\,\mathrm{d}t,
$$
where $J_{\mu}$ denotes the Bessel function of first kind of order $\mu$.
\vskip0.2cm

  \item[(ii)] $\displaystyle
\lim_{|x|\to+\infty}|x|^{n+2\alpha}G^\alpha_{1}(x)=C_{\alpha,n}\sin\alpha\pi.
$
\medskip

  \item [(iii)]$
\big|\nabla^{k}G^\alpha_{t}(x)\big|\leq t\big(t^{\frac{1}{2\alpha}}+|x|\big)^{-n-2\alpha-k}.
$
\medskip

\item [(iv)] $
\big|(-\Delta)^{\alpha}G^\alpha_{t}(x)\big|\leq \big(t^{\frac{1}{2\alpha}}+|x|\big)^{-n-2\alpha}.
$
\end{enumerate}
\end{lem}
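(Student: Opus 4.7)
The plan is to work directly with the Fourier definition $\widehat{G^\alpha_t}(\xi) = e^{-t|\xi|^{2\alpha}}$ and derive each item in turn. For (i), the substitution $\xi = t^{-1/(2\alpha)}\eta$ in the inverse Fourier transform immediately yields the scaling identity. Because $e^{-|\xi|^{2\alpha}}$ is radial, $G^\alpha_1$ is radial as well, and passing to spherical coordinates together with the classical formula for the Fourier transform of a radial function on $\R^n$ (which produces the Bessel kernel $J_{(n-2)/2}$) delivers the explicit representation stated in (i).

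For part (ii), my preferred route is subordination. Since $r \mapsto e^{-t r^{2\alpha}}$ is completely monotone on $[0,\infty)$ for $\alpha \in (0,1]$, there exists a one-sided $\alpha$-stable density $\eta^\alpha_t(s) \geq 0$ with
$$e^{-t|\xi|^{2\alpha}} = \int_0^\infty e^{-s|\xi|^2}\, \eta^\alpha_t(s) \, \mathrm{d}s,$$
whence
$$G^\alpha_t(x) = \int_0^\infty (4\pi s)^{-n/2} e^{-|x|^2/(4s)}\, \eta^\alpha_t(s) \, \mathrm{d}s.$$
The asymptotic $\eta^\alpha_1(s) \sim C_\alpha \sin(\alpha\pi)\, s^{-1-\alpha}$ as $s \to \infty$, obtainable from Laplace-transform inversion together with a contour deformation of the Bromwich integral, transfers by dominated convergence to the sharp tail bound $\lim_{|x| \to \infty} |x|^{n+2\alpha} G^\alpha_1(x) = C_{\alpha,n} \sin(\alpha\pi)$.

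For (iii), the self-similarity from (i) gives $\nabla^k G^\alpha_t(x) = t^{-(n+k)/(2\alpha)} (\nabla^k G^\alpha_1)(x/t^{1/(2\alpha)})$, so it suffices to establish the weight-free bound $|\nabla^k G^\alpha_1(y)| \lesssim (1+|y|)^{-n-2\alpha-k}$. Near the origin, smoothness of $G^\alpha_1$ together with integrability of $|\xi|^k e^{-|\xi|^{2\alpha}}$ yields a uniform bound, while for $|y|$ large, differentiating under the integral in either the Bessel representation or the subordinated form and using the tail of $\eta^\alpha_1$ from the previous step produces the matching decay rate. Tracking powers of $t$ in the rescaling then recovers the factor $t(t^{1/(2\alpha)} + |x|)^{-n-2\alpha-k}$ of (iii). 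For (iv) one uses that $G^\alpha_t$ satisfies $\partial_t G^\alpha_t + (-\Delta)^\alpha G^\alpha_t = 0$, so that
$$(-\Delta)^\alpha G^\alpha_t(x) = -\partial_t G^\alpha_t(x) = \frac{1}{2\alpha\, t}\bigl( n\, G^\alpha_t(x) + x \cdot \nabla G^\alpha_t(x)\bigr)$$
by the scaling identity; applying (iii) with $k = 0$ and $k = 1$ to the two summands, and noting $|x| \leq t^{1/(2\alpha)} + |x|$, gives the bound $(t^{1/(2\alpha)} + |x|)^{-n-2\alpha}$.

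The main obstacle is the sharp asymptotic in (ii): pinning down the constant $C_{\alpha,n}\sin(\alpha\pi)$ and excluding faster-than-power decay. This requires either a careful saddle-point analysis of the one-dimensional stable density or, in the subordination approach, a tail expansion of $\eta^\alpha_1$ uniform enough to commute the limit with the Gaussian integral. Everything else amounts to bookkeeping once the sharp asymptotic of (ii) is secured.
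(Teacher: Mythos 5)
The paper does not prove this lemma at all: it is quoted verbatim from the references \cite{BG,MYZ} (Blumenthal--Getoor for the tail asymptotics, Miao--Yuan--Zhang for the kernel estimates), so there is no in-paper argument to compare against. Judged on its own, your sketch is essentially the classical proof from those sources and is sound: the scaling and Bessel representation in (i) follow from the radial Fourier inversion formula exactly as you say; the subordination identity $G^\alpha_t(x)=\int_0^\infty (4\pi s)^{-n/2}e^{-|x|^2/(4s)}\eta^\alpha_t(s)\,\mathrm{d}s$ combined with the tail law $\eta^\alpha_1(s)\sim C_\alpha\sin(\alpha\pi)\,s^{-1-\alpha}$ is precisely how Blumenthal--Getoor obtain (ii) (the $\sin(\alpha\pi)$ entering through $\Gamma(1-\alpha)^{-1}=\pi^{-1}\Gamma(\alpha)\sin(\alpha\pi)$); and your bookkeeping for (iii) and the identity $(-\Delta)^{\alpha}G^\alpha_t=-\partial_tG^\alpha_t=\tfrac{1}{2\alpha t}\bigl(nG^\alpha_t+x\cdot\nabla G^\alpha_t\bigr)$ for (iv) check out, since $|x|\le t^{1/(2\alpha)}+|x|$ absorbs the extra factor. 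Two small imprecisions worth fixing: complete monotonicity should be asserted for $\lambda\mapsto e^{-t\lambda^{\alpha}}$ as a function of $\lambda=|\xi|^{2}$ (the map $r\mapsto e^{-tr^{2\alpha}}$ is \emph{not} completely monotone in $r$ once $\alpha>1/2$), and for $\alpha=1$ the subordinating measure degenerates to a point mass so the tail argument for (ii) does not literally apply --- though (ii) is then trivial because the Gaussian decays superpolynomially and $\sin\pi=0$. You correctly identify the only genuinely delicate point, namely justifying the interchange of the $|x|\to\infty$ limit with the integral against the stable density, which requires the uniform two-sided bound $\eta^\alpha_1(s)\le Cs^{-1-\alpha}$ together with the superexponential vanishing of $\eta^\alpha_1$ near $s=0$.
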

\begin{lem}\label{L2.3}
Let $\varphi\in L^{(r,\infty)}(\R^{n})$ with $1<r<+\infty$. Then we have
\begin{enumerate}
	\item [a)]for each $p\geq r,$  $
	\|G^\alpha_{t}\ast\varphi\|_{L^{p}(\R^n)}\leq C(n,p,r)t^{-\frac{n}{2\alpha}(\frac{1}{r}-\frac{1}{p})}\|\varphi\|_{L^{(r,\infty)}(\R^n)};
	$
	\smallskip
	
	\item[b)] $u(x,t)=G^\alpha_t\ast \varphi(x)\in \mathrm{BC}_{\rm w}\big([0,+\infty),\,L^{(r,\infty)}(\R^n)\big).
	$
\end{enumerate}
\end{lem}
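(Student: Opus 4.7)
The plan is to derive both parts from the Lorentz-space convolution inequality in Lemma 2.10(i), combined with the scaling and pointwise decay of $G^\alpha_t$ recorded in Lemma 2.12.

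First, from the self-similar structure $G^\alpha_t(x)=t^{-n/(2\alpha)}G^\alpha_1(t^{-1/(2\alpha)}x)$ I would record the scaling identity
\[\|G^\alpha_t\|_{L^{q,s}(\R^n)}=t^{-\frac{n}{2\alpha}(1-\frac{1}{q})}\|G^\alpha_1\|_{L^{q,s}(\R^n)},\]
valid for $1\le q\le\infty$ and $1\le s\le\infty$. Since $G^\alpha_1$ is smooth, positive, and satisfies $G^\alpha_1(x)\lesssim (1+|x|)^{-n-2\alpha}$ by Lemma 2.12(ii)--(iii), every Lorentz norm $\|G^\alpha_1\|_{L^{q,s}}$ is finite in the relevant range.

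For part (a), given $p\ge r$, I would pick $q\in[1,\infty)$ by $\tfrac{1}{q}+\tfrac{1}{r}=\tfrac{1}{p}+1$, so that $1-\tfrac{1}{q}=\tfrac{1}{r}-\tfrac{1}{p}$. Applying Lemma 2.10(i) with summability parameters $(s_1,s_2)=(1,\infty)$ gives
\[\|G^\alpha_t\ast\varphi\|_{L^p(\R^n)}\le C\,\|G^\alpha_t\|_{L^{q,1}(\R^n)}\|\varphi\|_{L^{(r,\infty)}(\R^n)}\le C\,t^{-\frac{n}{2\alpha}(\frac{1}{r}-\frac{1}{p})}\|\varphi\|_{L^{(r,\infty)}(\R^n)},\]
which is exactly (a); the endpoint $p=r$ is handled separately via $\|G^\alpha_t\|_{L^1}=1$ and the Lorentz refinement of classical Young that sends $L^1\ast L^{(r,\infty)}$ into $L^{(r,\infty)}$.

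For part (b), the uniform $L^{(r,\infty)}$-bound $\sup_{t\ge 0}\|G^\alpha_t\ast\varphi\|_{L^{(r,\infty)}}\lesssim\|\varphi\|_{L^{(r,\infty)}}$ follows from a second application of the Lorentz-space convolution inequality with the kernel in $L^1\hookrightarrow L^{(1,\infty)}$. For weak continuity in $t\in[0,+\infty)$, it suffices by density and uniform boundedness to test against $\psi\in\mathcal{S}(\R^n)$, which is dense in the predual $L^{(r',1)}(\R^n)$; by symmetry of convolution,
\[\int_{\R^n}(G^\alpha_t\ast\varphi)\,\psi\,\mathrm{d}x=\int_{\R^n}\varphi\,(G^\alpha_t\ast\psi)\,\mathrm{d}x,\]
and the problem reduces to showing $G^\alpha_t\ast\psi\to G^\alpha_{t_0}\ast\psi$ in $L^{(r',1)}(\R^n)$ as $t\to t_0$. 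For $t_0>0$ this is immediate by dominated convergence using Lemma 2.12(iii); for $t_0=0$ it is the approximate-identity property of the family $\{G^\alpha_t\}$, which is a non-negative family of $L^1$-densities of unit mass concentrating at the origin.

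The main obstacle is the continuity at $t=0^+$: since $L^{(r,\infty)}$ is not separable and $\mathcal{S}(\R^n)$ is not dense in it, strong continuity cannot be expected. The argument must therefore combine the uniform bound from part (a) with weak-$\ast$ convergence tested against the dense subspace $\mathcal{S}(\R^n)\subset L^{(r',1)}(\R^n)$, which is precisely why the conclusion is phrased using $\mathrm{BC}_{\rm w}$ rather than strong continuity.
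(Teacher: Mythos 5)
Your proposal is correct and follows essentially the same route as the paper: part (a) rests on the generalized Young inequality in Lorentz spaces combined with the scaling $\|G^\alpha_t\|_{L^{q,s}}=t^{-\frac{n}{2\alpha}(1-\frac{1}{q})}\|G^\alpha_1\|_{L^{q,s}}$ (the paper obtains weak-$L^{p}$ bounds and then interpolates, while you exploit the second Lorentz index $(s_1,s_2)=(1,\infty)$ to land in $L^{p,1}\hookrightarrow L^{p}$ directly --- a cosmetic difference), and part (b) is the same duality argument against the predual $L^{(r',1)}(\R^n)$ via density of nice test functions and the approximate-identity property of $G^\alpha_t$. The only caveat is the endpoint $p=r$: your $L^1\ast L^{(r,\infty)}\to L^{(r,\infty)}$ bound yields only the weak norm, and in fact the strong $L^{r}$ estimate asserted in the statement fails there (take $\varphi(x)=|x|^{-n/r}$); the paper's interpolation argument likewise covers only $p>r$, so this is a defect of the lemma as stated rather than of your proof.
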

\begin{proof} a) Since
{\color{blue}\begin{equation*}
\|G^\alpha_{t}\|_{L^{(p_1,\infty)}(\R^n)}\leq C\|G^\alpha_{t}\|_{L^{p_1}(\R^n)} =Ct^{-(1-\frac{1}{p_{1}})\frac{n}{2\alpha}}\|G^\alpha_{1}\|_{L^{p_{1}}(\R^n)},
\end{equation*}}
we obtain by the generalized Young inequality in Lemma \ref{GHY} that for all $p\in[r,+\infty),$
$$
\big\|G^\alpha _{t}\ast\varphi\big\|_{L^{(p,\infty)}(\R^n)}\leq \|G_{t}\|_{L^{(p_1,\infty)}(\R^n)}\,\|\varphi\|_{L^{(r,\infty)}(\R^n)}
\leq Ct^{-(1-\frac{1}{p_{1}})\frac{n}{2\alpha}}\|\varphi\|_{L^{(r,\infty)}(\R^n)},
$$
where $1+\frac{1}{p}=\frac{1}{p_{1}}+\frac{1}{r}$. This inequality together with the interpolation theorem yields the first desired result.

b)\
Now let $v\in L^{(r',1)}(\R^{n})$ which is  the dual space of $L^{(r,\infty)}(\R^{n})$.  We observe that for all $\varphi\in C_0^\infty(\R^n)$,
\begin{equation}\label{CC-1}
\big|\langle G^\alpha_{t}\ast\varphi-\varphi,v\rangle\big|=\big|\langle\varphi,G^\alpha_{t}\ast v-v\rangle\big|\leq \|\varphi\|_{L^{(r,\infty)}(\R^n)}\|G^\alpha_{t}\ast v-v\|_{L^{(r',1)}(\R^n)}.
\end{equation}
Since $C_{0}^{\infty}(\R^{n})$ is dense in $L^{(r',1)}(\R^{n})$, we have that $\varepsilon>0, $ there  exists a function  $\widetilde\varphi\in C_{0}^{\infty}(\R^{n})$ such that
\begin{equation}\label{CC}
\big\|v-\widetilde\varphi\big\|_{L^{(r',1)}(\R^n)}<\varepsilon.
\end{equation}
On the other hand,  we have by the fact that $\widetilde\varphi\in C_{0}^{\infty}(\R^{n})$  that  $G^\alpha_{t}\ast \widetilde\varphi\in L^\infty((0,+\infty), H^s_1(\R^n))$
for all $s\geq0.$ Since $G^\alpha_{t}\ast \widetilde\varphi$ solves
\[\partial_tu=-(-\Delta )^\alpha u,\]
we immediately get that $\|\partial_tG^\alpha_{t}\ast \widetilde\varphi\|_{H^s_1(\R^n)}\in L^\infty([0,+\infty))$ for all $s\geq0.$
This implies
that $u\in C((0,+\infty), H^s_1(\R^n))$ for all $s>0, $  and then we have $u\in C((0,+\infty), L^{(r',1)}(\R^n))$. Combining this fact with \eqref{CC} yields
$$
\big\|G^\alpha_{t}\ast v-v\big\|_{L^{(r',1)}(\R^n)}\to0\ \ \ \mbox{as}\ \ t\to0+
$$
It follows from \eqref{CC-1} that $\langle G_{t}\ast\varphi-\varphi,v\rangle \to0$ as $t\to0+$,
from which we obtain $ u(x,t)$ is weak $*$ continuous at 0 in the sense of $L^{(r,\infty)}(\R^n)$.  Similarly, we can show  that $u(x,t)$ is weak $*$ continuous for all $t>0$  in the sense of $L^{(r,\infty)}(\R^n)$.
\end{proof}

\begin{prop}\label{prop-U1}
Let 	$\varphi(x)=\frac{1}{|x|^{2\alpha-1}}$ with $\alpha \in (1/2,1]$, and $u_\alpha(x,t)=G_t^\alpha\ast \varphi(x) $. Then we have
\begin{enumerate}
	\item[\rm(i)]\; $u_\alpha \in \mathrm{BC}_{\rm w}\big([0,+\infty),\,L^{(\frac{3}{2\alpha-1},\infty)}(\R^3)\big)$.
	\item [\rm (ii)]\;  for all $s\in(\frac{3}{2\alpha-1},+\infty)$, $ \|u_\alpha(\cdot,1)\|_{L^p(\R^3)}<+\infty$, and for all $s>0$ and $p>\frac{3}{2\alpha}$,  $$\|\nabla u_\alpha(\cdot,1)\|_{H^s_p(\R^3)}<+\infty.$$
		\item [\rm (iii)]\;$\displaystyle \sup_{x\in\R^3}\langle x\rangle^{2\alpha-1+|\beta|}|D^\beta u_\alpha(x,1)|<+\infty$ for every  $\beta$.
\end{enumerate}
\end{prop}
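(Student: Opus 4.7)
The plan is to reduce everything to the pointwise estimate in (iii), which is the structural heart of the proposition; assertions (i) and (ii) then follow by invoking the tools already assembled in the excerpt. A key preliminary observation is that the homogeneous datum $\varphi(x)=|x|^{-(2\alpha-1)}$ belongs to $L^{(3/(2\alpha-1),\infty)}(\R^3)$. Consequently, part (i) is immediate from Lemma \ref{L2.3}(b) with $r=\frac{3}{2\alpha-1}$, and the baseline $L^p$ bound for $u_\alpha(\cdot,1)$ at the endpoint follows from Lemma \ref{L2.3}(a).

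For (iii) I would split the convolution
$$D^\beta u_\alpha(x,1)=\int_{\R^3}D^\beta G_1^\alpha(x-y)\,\varphi(y)\,\mathrm{d}y$$
into the three natural regions $\{|y|\le |x|/2\}$, $\{|y|\ge 2|x|\}$, and the middle annulus $\{|x|/2\le |y|\le 2|x|\}$. In the inner and outer regions the decay $|D^\beta G_1^\alpha(z)|\lesssim (1+|z|)^{-3-2\alpha-|\beta|}$ provided by Lemma \ref{L2.1}(iii), combined with the explicit integration of $\varphi$, yields contributions of order $|x|^{1-4\alpha-|\beta|}$, comfortably better than the target $\langle x\rangle^{-(2\alpha-1+|\beta|)}$ for $\alpha>1/2$. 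The middle annulus is the delicate piece and, in my view, the only nontrivial step in the whole proposition: for $|\beta|=0$ the naive bound $|\varphi(y)|\lesssim |x|^{-(2\alpha-1)}$ already gives the right answer, but for $|\beta|\ge 1$ we need to recover an extra $|x|^{-|\beta|}$. The standard device is to exploit the cancellation $\int_{\R^3} D^\beta G_1^\alpha\,\mathrm{d}y=0$ by subtracting the $(|\beta|-1)$-th Taylor polynomial of $\varphi$ centered at $x$; since $\varphi$ is $C^\infty$ off the origin with $|D^\gamma\varphi(y)|\lesssim |x|^{-(2\alpha-1+|\gamma|)}$ on the annulus, the Taylor remainder is $O\bigl(|x-y|^{|\beta|}|x|^{-(2\alpha-1+|\beta|)}\bigr)$ and integrates against $|D^\beta G_1^\alpha|$ to give the desired bound. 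Uniform control for bounded $x$ is routine from Lemma \ref{L2.1}(iii) and the fact that $\varphi\in L^1_{\rm loc}(\R^3)$ (since $2\alpha-1<3$).

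With (iii) in hand, (ii) is essentially free: the bound $|u_\alpha(x,1)|\lesssim \langle x\rangle^{-(2\alpha-1)}$ lies in $L^p(\R^3)$ exactly when $p>\frac{3}{2\alpha-1}$, while $|\nabla u_\alpha(x,1)|\lesssim \langle x\rangle^{-2\alpha}$ lies in $L^p(\R^3)$ when $p>\frac{3}{2\alpha}$. The Bessel-potential estimate then follows because (iii) controls \emph{all} derivatives of $\nabla u_\alpha(\cdot,1)$ with the corresponding polynomial decay, so $\nabla u_\alpha(\cdot,1)\in W^{k,p}(\R^3)$ for every integer $k\ge 0$ when $p>\frac{3}{2\alpha}$, and the Bessel norm $\|\cdot\|_{H^s_p}$ is dominated by $\|\cdot\|_{W^{k,p}}$ for any integer $k\ge s$, with the non-integer case handled by complex interpolation. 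Thus the whole proposition rests on the Taylor-cancellation argument in the middle annulus, which is the only place where the homogeneity of $\varphi$ is used in an essential way.
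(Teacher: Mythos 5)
Your proposal is correct, but for part (iii) it takes a genuinely different route from the paper. The paper proves (iii) by distributing the weight via $|x|^{2\alpha-1+|\beta|}\leq C\big(|x-y|^{2\alpha-1+|\beta|}+|y|^{2\alpha-1+|\beta|}\big)$: the term with the weight on the kernel is handled by the generalized Young inequality in Lorentz spaces (Lemma \ref{GHY}), and the term with the weight on the datum uses the boundedness of $|y|^{2\alpha-1}\varphi(y)$; only the case $\alpha=1$, $|\beta|=0$ is written out, the rest being asserted by ``repeating the same process.'' Your region decomposition with Taylor cancellation is in fact the more robust argument for $|\beta|\geq 1$, since a literal repetition of the paper's splitting produces the term $\int |D^{\beta}G_1^{\alpha}(x-y)|\,|y|^{|\beta|}\,\mathrm{d}y$, which grows like $|x|^{|\beta|}$ and requires exactly the kind of cancellation you invoke; so your approach buys a complete treatment of the derivative case, while the paper's buys brevity at the $|\beta|=0$ level. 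Likewise, you derive (ii) from the pointwise bounds of (iii), which cleanly avoids the endpoint $p=\tfrac{3}{2\alpha-1}$ and actually justifies the Bessel-potential bound on $\nabla u_\alpha(\cdot,1)$, whereas the paper cites Lemmas \ref{L2.1} and \ref{L2.3} for both claims. One small point to tighten: the vanishing-moment identity $\int_{\R^3}D^{\beta}G_1^{\alpha}(z)\,z^{\gamma}\,\mathrm{d}z=0$ for $|\gamma|\leq|\beta|-1$ holds over all of $\R^3$, not over the middle annulus alone, so you should subtract the Taylor polynomial globally and then estimate $\varphi-P$ on all three regions (the inner and outer contributions of $P$ are controlled by $|P(y)|\lesssim |x|^{-(2\alpha-1)}$ on $\{|y|\leq|x|/2\}$ and $|P(y)|\lesssim\sum_{|\gamma|\le|\beta|-1}|x|^{-(2\alpha-1)-|\gamma|}|y|^{|\gamma|}$ on $\{|y|\geq 2|x|\}$, both of which integrate to $O(|x|^{1-4\alpha-|\beta|})$); with that adjustment the argument closes.
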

\begin{proof}
	Since $\varphi(x)=\frac{1}{|x|^{2\alpha-1}}$ with $\alpha \in (1/2,1]$, it is easy to check that
$\varphi\in L^{(\frac{3}{2\alpha-1},\infty)}(\R^3)$.
 Thus, we can get the first two results by  Lemma \ref{L2.1} and Lemma \ref{L2.3}.
	
	For $\alpha=1,$ we see that
	\[u_1(x,1)=\frac{1}{(4\pi)^{\frac32}}\int_{\R^3}e^{-\frac{|x-y|^2}{4}}\varphi(y)\,\mathrm{d}y.\]
	We calculate
	\begin{equation*}
	\big||x|u_1(x,1)\big|\leq \frac{1}{(4\pi)^{\frac32}}\int_{\R^3}|x-y|e^{-\frac{|x-y|^2}{4}}|\varphi|(y)\,\mathrm{d}y+\frac{1}{(4\pi)^{\frac32}}\int_{\R^3}e^{-\frac{|x-y|^2}{4}}\big||y|\varphi\big|(y)\,\mathrm{d}y.
	\end{equation*}
	On one hand,
	\begin{equation*}
	\frac{1}{(4\pi)^{\frac32}}\int_{\R^3}e^{-\frac{|x-y|^2}{4}}\big||y|\varphi\big|(y)\,\mathrm{d}y
	\leq\sup_{x\in\R^3}\big||x|\varphi\big|(x)\,\frac{1}{(4\pi)^{\frac32}}\int_{\R^3}e^{-\frac{|x-y|^2}{4}}\,\mathrm{d}y\leq\sup_{x\in\R^3}\big||x|\varphi\big|(x).
	\end{equation*}
	On the other hand, we obtain by the generalized Young inequality that
	\begin{equation*}
	\frac{1}{(4\pi)^{\frac32}}\int_{\R^3}|x-y|e^{-\frac{|x-y|^2}{4}}|\varphi|(y)\,\mathrm{d}y
	\leq \big\||\cdot|\Phi(\cdot)\big\|_{L^{(2,1)}(\R^3)}\|\varphi\|_{L^{(3,1)}(\R^3)}.
	\end{equation*}
	Combining both estimates yields the third result for the case $|\beta|=0$ and $\alpha=1$. Repeating the same process, we can show the third result for each $\beta $ and $\alpha\in(1/2,1].$
\end{proof}

\setcounter{equation}{0}
\setcounter{equation}{0}
\section{Existence and regularity of  solutions to the corresponding elliptic  system }

\subsection{Existence of  solutions in $H^{\alpha}(\R^{3})$}

In this subsection, we  establish the existence of the  solution $U(x)$ of \eqref{E3.2} by the Leray-Schauder principle.
In this subsection, we always assume that $\alpha\in(5/8,1]$.
 From \eqref{E1.1},  we know that  the profile $U(x)$ of $u(x,t)$ satisfies
\begin{equation}\label{E3.2-}
\left\{ \begin{aligned}
&(-\Delta)^{\alpha}U+U\cdot\nabla U+\nabla P-\frac{2\alpha-1}{2\alpha}U(x)-\frac{1}{2\alpha}x\cdot \nabla U=0, \\
&\mbox{div}\ U=0,
\end{aligned}
\right.\qquad \text{in}\quad\R^{3}.
\end{equation}
Letting $U_{0}=G_{1}\ast u_0$, there exists a  pressure $P_0(x)$  such that
$$
(-\Delta)^{\alpha}U_{0}+\nabla P_{0}-\frac{2\alpha-1}{2\alpha}U_{0}(x)-\frac{1}{2\alpha}x\cdot \nabla U_{0}=0.
$$
We decompose $U=U_{0}+V$, then  the difference part $V(x)$ satisfies for $x\in\R^{3}$
\begin{equation}\label{E3.2}
 \left\{ \begin{aligned}
(-\Delta)^{\alpha}V-\frac{2\alpha-1}{2\alpha}V(x)-\frac{1}{2\alpha}x\cdot \nabla V+\nabla P=-U_{0}\cdot\nabla U_{0},
-(U_{0}+V)\cdot\nabla V-V\cdot\nabla U_{0}, \\
 \mbox{div}\ V=0
\end{aligned}
\right.
\end{equation}
with a suitable scalar $P$. Thus the problem to solve $U$ is equivalent to solving  \eqref{E3.2}. For this purpose,
 we introduce the following hypervisicosity perturbation of \eqref{E3.2}:
\begin{equation}\label{E3.3}
 \left\{\begin{array}{ll}
-\epsilon\Delta V+(-\Delta)^{\alpha}V+\nabla P=\lambda\Big(\frac{2\alpha-1}{2\alpha}V(x)+\frac{1}{2\alpha}x\cdot \nabla V+F(V)\Big),\\
\mbox{div}\ V=0,
\end{array}
\right.
\end{equation}
where
$$F(V)=-U_{0}\cdot\nabla U_{0}-(U_{0}+V)\nabla V-V\cdot\nabla U_{0}, \;\;\;\lambda\in[0,1].$$

 To overcome the loss of compactness of $ H_{0,\sigma}^{1}(\R^{3})\cap H_{0,\sigma}^{\alpha}(\R^{3})$,
  we will approximate $\R^{3}$ by an increasing sequence of concentric balls,
  construct solutions of \eqref{E3.3} in these balls with zero boundary condition, and take a limit of the approximate solution sequence to obtain a desired solution in $\R^{3}$ to \eqref{E3.3} at $\lambda=1$.
   Letting $\epsilon\to0$, we finally obtain the existence of solution to problem \eqref{E3.2}, and then this solution
   is converted into a self-similar solution of \eqref{E1.1}.

Now we construct a weak solution $V_{R,\epsilon}$ of \eqref{E3.3} in the following  space
$$
 \mathcal{X}_{R}\triangleq \Big\{u:\,\,  u\in H_{0,\sigma}^{1}(\B_{R})\cap H_{0,\sigma}^{\alpha}(\B_{R})\quad\text{and}\quad u\equiv0\ \ \mbox{for}\ x\in \R^{3}\setminus\B_{R}\Big\}. $$
 This  means that  for all $\varphi\in \mathcal{X}_{R}$, to look for $V_{R,\epsilon}$ satisfying
\begin{equation}\label{E3.4}
\begin{split}
&\int_{\R^{3}}(\epsilon\nabla V_{R,\epsilon}\cdot \nabla \varphi+(-\Delta)^{\frac{\alpha}{2}} V_{R,\epsilon}\cdot (-\Delta)^{\frac{\alpha}{2}} \varphi) \,{\rm d}x\\
=&\lambda\int_{\B_{R}}\Big(\frac{2\alpha-1}{2\alpha}V_{R,\epsilon}+\frac{1}{2\alpha}x\cdot\nabla V_{R,\epsilon}+F(V_{R,\epsilon})\Big)\cdot \varphi \,{\rm d}x,
\end{split}
\end{equation}
for $\lambda\in[0,1]$. Here the second term of the left hand side is defined via Fourier transform
$$
\int_{\R^{3}}(-\Delta)^{\frac{\alpha}{2}} V_{R,\epsilon}\cdot (-\Delta)^{\frac{\alpha}{2}} \varphi \,{\rm d}x=\int_{\R^{3}}|\xi|^{2\alpha}\hat{\varphi}(\xi)\hat{V}_{R,\epsilon}(\xi)\,{\rm d}\xi.
$$
Let   $u, v\in \mathcal{X}_{R}$, we  introduce inner product as follows
$$
\langle u,v\rangle_{\mathcal{\mathcal{X}_{R}}}=\int_{\R^{3}}\epsilon\nabla u\cdot \nabla v\,{\rm d}x +\int_{\R^{3}}(-\Delta)^{\frac{\alpha}{2}} u\cdot (-\Delta)^{\frac{\alpha}{2}} v\,{\rm d}x.
$$
Then equation \eqref{E3.4} can be rewritten as
$$
\langle V,\varphi\rangle_{\mathcal{\mathcal{X}_{R}}}=\lambda\int_{\B_{R}}\Big(\frac{2\alpha-1}{2\alpha}V+\frac{1}{2\alpha}x\cdot\nabla V+F(V)\Big)\cdot \varphi \,{\rm d}x, \ \ \ \forall\ \varphi\in \mathcal{\mathcal{X}_{R}}.
$$
 By the Riesz representation theorem,  for any $f\in \mathcal{\mathcal{X}'_{R}}$ there exists a unique linear mapping
$\mathbb{T}(f)\in \mathcal{\mathcal{X}_{R}}$ such that
$$
\langle \mathbb{T}(f),\varphi\rangle_{\mathcal{X}_{R}}=\int_{\B_{R}}f\cdot\varphi \,{\rm d}x,\ \ \ \forall\, \varphi\in \mathcal{X}_{R},
$$
with
$$
\|\mathbb{T}(f)\|_{\mathcal{X}_{R}}\leq \|f\|_{\mathcal{X}'_{R}}.
$$
According to \eqref{E3.4}, we define the following operator
\begin{equation}\label{E3.5}
V_{R,\epsilon}=\lambda\mathbb{T}\Big(\frac{2\alpha-1}{2\alpha}V_{R,\epsilon}+\frac{1}{2\alpha}x\cdot\nabla V_{R,\epsilon}+F(V_{R,\epsilon})\Big)\triangleq \lambda S(V_{R,\epsilon}).
\end{equation}

To prove the existence of a solution $V_{R}$ of  integral equation \eqref{E3.5} at $\lambda=1$, we first have to prove that the set
$$
\Big\{x\in \mathcal{X}_{R}:\,\, x=\lambda Sx\ \ \ \mbox{for some}\ \lambda\in [0,1]\Big\}
$$
is bounded in $X$, and then prove the operator $S$ is continuous and compact.

\medskip
{\bf Step 1}: {\em a priori bound}
\begin{lem}[a priori estimate]\label{L3.2}
Let $V_{R,\epsilon}$ be the solution of \eqref{E3.4}, we have
$$
\int_{\R^{3}}\Big(\epsilon|\nabla V_{R,\epsilon}|^{2}+|(-\Delta)^{\frac{\alpha}{2}}V_{R,\epsilon}|^{2}+\frac{5-4\alpha}{4\alpha}|V_{R,\epsilon}|^{2}\Big)\,{\rm d}x\leq C(U_{0},R,\epsilon).
$$
\end{lem}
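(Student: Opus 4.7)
The natural strategy is an energy estimate obtained by testing the weak formulation \eqref{E3.4} against $\varphi = V_{R,\epsilon}$, which is admissible because $V_{R,\epsilon}\in \mathcal{X}_R$. The left-hand side becomes immediately the coercive quantity $\epsilon\|\nabla V_{R,\epsilon}\|_{L^2}^2 + \|(-\Delta)^{\alpha/2}V_{R,\epsilon}\|_{L^2}^2$. Two algebraic cancellations then structure the right-hand side. First, integration by parts using $V_{R,\epsilon}=0$ on $\partial\B_R$ gives $\int(x\cdot\nabla V)\cdot V\,\mathrm{d}x = -\tfrac{3}{2}\|V\|^2_{L^2}$, and combined with the $\tfrac{2\alpha-1}{2\alpha}\|V\|^2_{L^2}$ contribution it produces the coefficient $-\tfrac{5-4\alpha}{4\alpha}$, which on being transferred to the left yields precisely the mass term $\lambda\tfrac{5-4\alpha}{4\alpha}\|V\|^2_{L^2}$ (with nonnegative coefficient since $\alpha\leq 1$). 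Second, the transport contribution $\int(U_0+V)\cdot\nabla V\cdot V\,\mathrm{d}x = \tfrac{1}{2}\int(U_0+V)\cdot\nabla|V|^2\,\mathrm{d}x$ vanishes after integration by parts, by $\mathrm{div}(U_0+V)=0$ and $V|_{\partial\B_R}=0$.

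Only two nonlinear terms then remain, namely $\lambda\int U_0\cdot\nabla U_0\cdot V\,\mathrm{d}x$ and $\lambda\int V\cdot\nabla U_0\cdot V\,\mathrm{d}x$. For the first I would integrate by parts using $\mathrm{div}\,U_0 = 0$ to rewrite it as $-\int(U_0\otimes U_0):\nabla V\,\mathrm{d}x$, then apply Cauchy--Schwarz and Young's inequality to bound it by $C_\delta\|U_0\|^4_{L^4(\B_R)}+\delta\|\nabla V\|^2_{L^2}$, choosing $\delta$ to absorb a small fraction of $\epsilon\|\nabla V\|^2$ on the left. For the second, I would again use $\mathrm{div}\,V=0$ to rewrite it as $-\int U_{0,i} V_j\partial_j V_i\,\mathrm{d}x$, estimate it by $\|U_0\|_{L^\infty(\B_R)}\|V\|_{L^2}\|\nabla V\|_{L^2}$, split via Young's inequality into an $\epsilon$-absorbable derivative piece and a residual term $C(U_0,\epsilon)\|V\|^2_{L^2}$, and handle the residual through the Poincar\'e inequality on $\B_R$ (valid since $V$ vanishes on $\partial\B_R$) to convert $\|V\|^2_{L^2}\leq C(R)\|\nabla V\|^2_{L^2}$ and absorb it into the remaining portion of $\epsilon\|\nabla V\|^2$. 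All constants in this absorption step are allowed to depend on $U_0$, $R$, and $\epsilon$.

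The principal obstacle is the quadratic-in-$V$ term $\int V\cdot\nabla U_0\cdot V\,\mathrm{d}x$, which does not vanish for divergence-free $V$ and carries no smallness for general large data; its absorption succeeds only because of the artificial $\epsilon\Delta V$ regularization combined with Poincar\'e on the bounded domain $\B_R$, and consequently the final constant must be permitted to blow up as $\epsilon\to 0$ or $R\to\infty$ --- consistent with the stated dependence $C=C(U_0,R,\epsilon)$. The apparent absence of a factor $\lambda$ in front of $\tfrac{5-4\alpha}{4\alpha}\|V\|^2_{L^2}$ on the left-hand side of the lemma is harmless: from the $\lambda$-weighted estimate just derived, the extra amount $(1-\lambda)\tfrac{5-4\alpha}{4\alpha}\|V\|^2_{L^2}$ is dominated by $C(R,\alpha)\|\nabla V\|^2_{L^2}\leq C(U_0,R,\epsilon)$ via Poincar\'e, giving the $\lambda$-free version. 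Removing the $\epsilon$-regularization (and later $R\to\infty$) to upgrade Lemma \ref{L3.2} to an estimate uniform in these parameters will require the blow-up contradiction argument of Korobkov--Tsai advertised in the introduction, together with Lemma \ref{L3.1} on the constancy of the Euler pressure on $\partial\Omega$, but this is a separate step subsequent to the present lemma.
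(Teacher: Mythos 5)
Your energy identity is set up correctly (the cancellation producing $\tfrac{5-4\alpha}{4\alpha}\|V\|_{L^2}^2$ and the vanishing of the transport term match equation \eqref{E3.7} of the paper), and your treatment of $\int U_0\cdot\nabla U_0\cdot V\,\mathrm{d}x$ is fine. But the proof breaks down at the absorption of the quadratic term $\lambda\int V\cdot\nabla U_0\cdot V\,\mathrm{d}x$. After your integration by parts and Young's inequality this term produces a contribution of size $C\,\|U_0\|_{L^{\infty}(\B_R)}^{2}\,\|V\|_{L^2}^2$ (or, estimated directly, $\|\nabla U_0\|_{L^\infty}\|V\|_{L^2}^2$), i.e.\ a term quadratic in $V$ whose coefficient is determined by the size of the data. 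The only quadratic-in-$V$ quantities available on the left are $\epsilon\|\nabla V\|_{L^2}^2$, $\|(-\Delta)^{\alpha/2}V\|_{L^2}^2$ and $\lambda\tfrac{5-4\alpha}{4\alpha}\|V\|_{L^2}^2$; via Poincar\'e these control $\|V\|_{L^2}^2$ only with a \emph{fixed} coefficient (of order $\epsilon/C(R)$ plus a constant of order one). Absorption requires the coefficient of the term being absorbed to be smaller than the coefficient of the absorbing term --- this is a constraint on the data, not something that can be bought by letting the final constant $C(U_0,R,\epsilon)$ be large. For large $U_0$ (which is the entire point of the theorem) the inequality you obtain has the form $(\text{positive quantity})\leq A\|V\|_{L^2}^2-a\|V\|_{L^2}^2$ with $A>a$, which yields no bound at all. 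So the direct energy method cannot close for large self-similar data; this is precisely the difficulty the introduction flags.

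The paper's proof of Lemma \ref{L3.2} is instead a blow-up contradiction argument already at this stage (not only for the later $\epsilon$-, $R$-uniform bound of Lemma \ref{L3.5}, as your last paragraph suggests): one assumes $L_k^2\to\infty$ along a sequence $\lambda_k\to\lambda_0$, normalizes $\tilde V_k=V_k/L_k$, extracts a weak limit $V$ which solves the stationary Euler system in $\B_R$ with some pressure $P\in D^{1,3/2}(\B_R)$, and passes to the limit in the energy identity to get $1+\tfrac{\lambda_0(5-4\alpha)}{4\alpha}\|V\|_{L^2}^2=\lambda_0\int_{\B_R}(V\cdot\nabla V)\cdot U_0\,\mathrm{d}x$. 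The crucial structural cancellation --- invisible to Cauchy--Schwarz --- is that $V\cdot\nabla V=-\nabla P$ in the limit, so the right-hand side equals $-\lambda_0\int_{\B_R}U_0\cdot\nabla P\,\mathrm{d}x$, which vanishes because $P$ is constant on $\partial\B_R$ (Lemma \ref{L3.1}) and $\mathrm{div}\,U_0=0$; this gives the contradiction $1\leq 0$. You would need to replace your absorption step by this normalization--compactness--Euler-limit argument for the lemma to hold for arbitrarily large data.
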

\begin{proof}We will give a proof of Lemma \ref{L3.2} by contradiction. Now let us suppose that there exists a sequence $\lambda_{k}\in [0,1]$ and functions
$V_{k}\triangleq V^{(k)}_{R,\epsilon}\in \mathcal{X}_{R}$ such that
\begin{equation}\label{E3.6}
\left \{
\begin{array}{ll}
-\epsilon\Delta V_{k}+(-\Delta)^{\alpha}V_{k}+\nabla P_{k}=\lambda_{k}\Big(&\frac{2\alpha-1}{2\alpha}V_{k}(x)+\frac{1}{2\alpha}x\cdot \nabla V_{k}-U_{0}\cdot\nabla U_{0}\\
&-(U_{0}+V_{k})\nabla V_{k}-V_{k}\cdot\nabla U_{0}\Big)\\
\mbox{div}\ V_{k}=0,
\end{array}
\right.
\end{equation}
and
$$
L^{2}_{k}\triangleq\int_{\R^{3}}(\epsilon|\nabla V_{k}|^{2}+|(-\Delta)^{\frac{\alpha}{2}}V_{k}|^{2})\,{\rm d}x\to+\infty,\ \ \ \ \lambda_{k}\to\lambda_{0}\in[0,1].
$$
Multiplying \eqref{E3.6} by $V_{k}$ and integrating by parts in $\B_{R}$, we obtain
\begin{equation}\label{E3.7}
L_{k}^{2}+\frac{\lambda_{k}(5-4\alpha)}{4\alpha}\int_{\B_{R}}|V_{k}|^{2}\,{\rm d}x=\lambda_{k}\int_{\B_{R}}\Big(-U_{0}\cdot\nabla U_{0}
-V_{k}\cdot\nabla U_{0}\Big)\cdot V_{k}\,{\rm d}x,
\end{equation}
where we have used the fact that
$$
\int_{B_R}\big(U_{0}+V_{k})\cdot\nabla V_{k}\big)\cdot V_{k}\,{\rm d}x =0.
$$
Now we consider the normalized sequence of functions
$$
\tilde{V}_{k}\triangleq\frac{1}{L_{k}}V_{k}\quad\text{and}\quad\tilde{P}_{k}\triangleq\frac{1}{\lambda_{k}L_{k}^{2}}P_{k},
$$
such that
$$
\int_{\R^{3}}\Big(\epsilon|\nabla \tilde{V}_{k}|^{2}+|(-\Delta)^{\frac{\alpha}{2}}\tilde{V}_{k}|^{2}\Big)\,{\rm d}x=1.
$$
Therefore,  we can extract a subsequence still denoted by $\tilde{V}_{k}$ such that
$$
\tilde{V}_{k}\rightharpoonup V \ \ \ \mbox{in}\ \ H_{0}^{1}(\B_{R})\cap H_{0}^{\alpha}(\B_{R}).$$
This means
$$
\tilde{V}_{k}\rightarrow V  \ \ \ \mbox{in}\ \ L^{3}(\B_{R}).
$$
Multiplying identity \eqref{E3.7} by $\frac{1}{L_{k}^{2}} $ and taking a limit as $k\to+\infty$, we have
\begin{equation}\label{E3.8}
1+\frac{\lambda_{0}(5-4\alpha)}{4\alpha}\int_{\B_{R}}|V|^{2}\,{\rm d}x=-\lambda_{0}\int_{\B_{R}}(V\cdot\nabla U_{0})V\,{\rm d}x=\lambda_{0}\int_{\B_{R}}(V\cdot\nabla V)U_{0}\,{\rm d}x,
\end{equation}
this relation yields $\lambda_{0}>0$.

Multiplying equation \eqref{E3.6} by $\frac{1}{\lambda_{k}L_{k}^{2}}$, we have
\begin{align*}
\tilde{V}_{k}\cdot\nabla \tilde{V}_{k}+\nabla \tilde{P}_{k}={1\over L_k}&\Big({{\varepsilon}\over \lambda_k}\Delta \tilde{V}_k+{1\over \lambda_k}(-\Delta)^{\alpha}\tilde{V}_k+{2\alpha-1\over 2\alpha}\tilde{V}_k+\frac{1}{2\alpha}x\cdot\nabla\tilde{V}_k\\
&\,\,\,\,-{1\over L_k}U_0\cdot\nabla U_0-U_0\cdot\nabla\tilde{V}_k-\tilde{V}_k\cdot\nabla U_0\Big).
\end{align*}
Multiplying the above equation by $\varphi$ and integrating the resulting equality over $\mathbb{B}_R$, we can show that
$$
\int_{\B_{R}}(V\cdot\nabla V)\cdot\varphi\, \,{\rm d}x=0\qquad \text{for all}\,\, \; \varphi\in C_{0,\sigma}^{\infty}(\mathbb{B}_R).
$$
Hence, we have by the Rham Theorem (for example, see \cite{Simon}) that there exists a pressure $P\in D^{1,\frac{3}{2}}(\B_{R})\cap L^{3}(\B_{R})$ such that
$(V,P)$ solves
\begin{equation*}
\left \{
\begin{array}{ll}
V\cdot\nabla V+\nabla P=0\ \ \ \quad&\mbox{in}\ \ \B_{R},\\
\mbox{div}\ V=0 \ \ \ &\mbox{in}\ \ \B_{R},\\
V=0 \ \ \ &\mbox{in}\ \ \R^{3}\setminus\B_{R}.
\end{array}
\right.
\end{equation*}
By Lemma \ref{E3.1}, there exists a constant $c\in\R$ such that $P(x)=c$ on $\partial\B_{R}$.
This fact helps us to get
\begin{align*}
1+\frac{\lambda_{0}(5-4\alpha)}{4\alpha}\int_{\B_{R}}|V|^{2}\,{\rm d}x&=-\lambda_{0}\int_{\B_{R}}U_{0}\cdot\nabla P \,{\rm d}x\\
&=-\lambda_{0}\int_{\B_{R}}\mbox{div}(P U_{0})\,{\rm d}x\\
&=c\lambda_0\int_{\partial\B_R} U_0\cdot \overrightarrow{n}\,{\rm d}s\\
&=-c\lambda_0\int_{\B_R} \nabla \cdot U_0 \,{\rm d}x=0.
\end{align*}
This is a contradiction, and so we have completed the  proof of Lemma \ref{L3.2}.
\end{proof}

\medskip
{\bf Step 2:} Continuity and compactness

\begin{lem}\label{L3.3}
 The operator
$$
S: \mathcal{X}_{R}\ni v\to \mathbb{T}\left(\frac{2\alpha-1}{2\alpha}v+\frac{1}{2\alpha}x\cdot\nabla v+F(v)\right)\in \mathcal{X}_{R}
$$
is continuous and compact.
\end{lem}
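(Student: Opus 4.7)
\textbf{Proof proposal for Lemma \ref{L3.3}.}

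The plan is to factor $S=\mathbb{T}\circ N$, where
\[
N(v)\triangleq\frac{2\alpha-1}{2\alpha}v+\frac{1}{2\alpha}x\cdot\nabla v+F(v).
\]
Since the Riesz map $\mathbb{T}:\mathcal{X}'_R\to\mathcal{X}_R$ is a bounded linear isometry, it suffices to show that the nonlinear map $N:\mathcal{X}_R\to\mathcal{X}'_R$ is continuous and compact. I would first record the bound $\|N(v)\|_{\mathcal{X}'_R}\leq C(R,\epsilon,U_0)(1+\|v\|_{\mathcal{X}_R}^2)$: for any test $\varphi\in\mathcal{X}_R$ the divergence-free condition plus vanishing boundary values give
\[
\int_{\B_R}(w\cdot\nabla v)\cdot\varphi\,\mathrm{d}x=-\int_{\B_R}(w\otimes v):\nabla\varphi\,\mathrm{d}x\qquad\text{whenever }\mathrm{div}\,w=0,
\]
so each quadratic piece of $F(v)$ is controlled by $C(\|U_0\|_{L^4}+\|v\|_{L^4})^2\|\nabla\varphi\|_{L^2}$. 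The transport term is dealt with by another integration by parts,
\[
\int_{\B_R}(x\cdot\nabla v)\cdot\varphi\,\mathrm{d}x=-3\int_{\B_R}v\cdot\varphi\,\mathrm{d}x-\int_{\B_R}v\cdot(x\cdot\nabla\varphi)\,\mathrm{d}x,
\]
which is dominated by $(3+R)\|v\|_{L^2}\|\varphi\|_{H^1}$. Combining these with the Sobolev embedding $H^1_0(\B_R)\hookrightarrow L^4(\B_R)$, the fact that the $\mathcal{X}_R$-norm dominates $\sqrt{\epsilon}\,\|\nabla\cdot\|_{L^2}$, and the integrability of $U_0$ supplied by Proposition \ref{prop-U1}, yields the claimed bound; in particular $U_0\cdot\nabla U_0$ is a fixed element of $\mathcal{X}'_R$.

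For compactness, I take a bounded sequence $\{v_n\}\subset\mathcal{X}_R$. The continuous embedding $\mathcal{X}_R\hookrightarrow H^1_0(\B_R)$ together with the Rellich--Kondrachov theorem $H^1_0(\B_R)\hookrightarrow\hookrightarrow L^q(\B_R)$ for every $q\in[1,6)$ allows me to extract a subsequence with $v_n\rightharpoonup v$ in $\mathcal{X}_R$ and $v_n\to v$ strongly in every such $L^q$. Splitting the quadratic differences as
\[
(U_0+v_n)\cdot\nabla v_n-(U_0+v)\cdot\nabla v=(U_0+v_n)\cdot\nabla(v_n-v)+(v_n-v)\cdot\nabla v,
\]
and analogously for $v\cdot\nabla U_0$, each resulting expression tested against $\varphi$ and integrated by parts is bounded by $C(\|U_0\|_{L^4}+\|v_n\|_{L^4}+\|v\|_{L^4})\|v_n-v\|_{L^4}\|\nabla\varphi\|_{L^2}$. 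The linear and transport pieces of $N(v_n)-N(v)$ become, after the same integration by parts, pairings of $v_n-v$ with $\varphi$ and $x\cdot\nabla\varphi$, dominated by $(3+R)\|v_n-v\|_{L^2}\|\varphi\|_{\mathcal{X}_R}$. Strong $L^q$ convergence for $q\in[2,4]$ therefore forces $N(v_n)\to N(v)$ in $\mathcal{X}'_R$, whence $S(v_n)=\mathbb{T}N(v_n)\to\mathbb{T}N(v)=S(v)$ in $\mathcal{X}_R$. Continuity follows from exactly the same estimate applied to a strongly convergent sequence in $\mathcal{X}_R$.

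The main subtlety, in my view, is the transport term $x\cdot\nabla v$: a priori, weak convergence in $\mathcal{X}_R$ yields only weak $L^2$ convergence of $\nabla v_n$, which is too feeble to pass to the limit directly. The key observation is that on the bounded domain $\B_R$ the multiplier $|x|\leq R$ is bounded, so integration by parts can transfer the derivative onto the test function $\varphi$ and reduce everything to strong $L^2$ convergence of $v_n$ itself, which Rellich supplies. This boundedness of the coefficient is precisely what fails on $\R^3$, which explains why the authors first solve on balls and only afterwards send $R\to\infty$.
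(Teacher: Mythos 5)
Your proposal is correct and follows essentially the same route as the paper: bound the nonlinearity in $\mathcal{X}'_R$ via the Riesz map, use Rellich--Kondrachov on the bounded ball for compactness, and integrate by parts so that only strong $L^q(\B_R)$ convergence of $v_n$ is needed. The only differences are cosmetic (you use $L^4$--$L^4$--$L^2$ H\"older on the divergence-form quadratic terms where the paper uses $L^3$/$L^6$/$L^2$ splittings, and the paper proves continuity via an explicit local Lipschitz bound rather than by reusing the compactness estimate), and your closing observation about why the boundedness of the coefficient $x$ forces the approximation by balls matches the paper's strategy.
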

\begin{proof}\; First we prove $S$ is continuous.  Let $v_{1}, v_{2}\in \mathcal{X}_R$, then
\begin{align*}
&\|S(v_{1})-S(v_{2})\|_{\mathcal{X}_{R}}\\
=&
\Big\|\mathbb{T}\Big(\frac{2\alpha-1}{2\alpha}v_{1}+\frac{1}{2\alpha}x\cdot\nabla v_{1}+F(v_{1})\Big)-\mathbb{T}\Big(\frac{2\alpha-1}{2\alpha}v_{2}+\frac{1}{2\alpha}x\cdot\nabla v_{2}+F(v_{2})\Big)\Big\|_{\mathcal{X}_{R}}\\
\leq &C\Big(\|v_{1}-v_{2}\|_{\mathcal{X}'_R}+\frac{1}{2\alpha}\|x\cdot\nabla v_{1}-x\cdot\nabla v_{2}\|_{\mathcal{X}'_R}+\|F(v_{1})-F(v_{2})\|_{\mathcal{X}_{R}'}\Big).
\end{align*}
Since $L^{2}(\B_{R})\subset L^{\frac{6}{5}}(\B_{R})\subset \mathcal{X}'_R$, we have
$$
\|v_{1}-v_{2}\|_{\mathcal{X}'_{R}}\leq \|v_{1}-v_{2}\|_{L^{2}(\B_{R})},\,\,\ \ \|x\cdot\nabla v_{1}-x\cdot\nabla v_{2}\|_{\mathcal{X}'_{R}}\leq C\|\nabla(v_{1}-v_{2})\|_{L^{2}(\B_{R})},
$$
and
\begin{align*}
\|F(v_{1})-F(v_{2})\|_{\mathcal{X}'_{R}}\leq &\|U_{0}\|_{L^{3}(\B_{R})} \|\nabla(v_{1}-v_{2})\|_{L^{2}(\B_{R})}+\|\nabla U_{0}\|_{L^{2}(\B_{R})}\|v_{1}-v_{2}\|_{L^{3}(\B_{R})}\\&
+\|\nabla v_1\|_{L^2(\B_R)}\|v_1-v_2\|_{L^3(\B_R)}+\|v_2\|_{L^3(\B_R)}\|\nabla(v_1-v_2)\|_{L^2(\B_R)}.
\end{align*}
So
$$
\|S(v_{1})-S(v_{2})\|_{\mathcal{X}_{R}}\leq C\big (1+\|v_{1}\|_{\mathcal{X}_R}+\|v_{2}\|_{\mathcal{X}_R}\big)\|v_{1}-v_{2}\|_{\mathcal{X}_{R}},
$$
which implies that $S$ is continuous.

Now we prove $S$ is compact, it suffices to show that: for any bounded sequence $v_{k}$, there exists a subsequence $v_{kl}$ such that
\begin{align}\label{E3.9}
\|S(v_{k_{l}})-S(v)\|_{\mathcal{X}_{R}}\to0 \ \ \ \ \mbox{as}\ \ l\to+\infty, \ \ \mbox{for some}\ \  v\in \mathcal{X}_{R}.
\end{align}
Indeed, if $\|v_{k}\|_{\mathcal{X}_{R}}<C$, then by the Sobolev embedding theorem, there exists a subsequence, still denoted by $v_{k}$, such that
$$
v_{k}\to v\ \ \ \mbox{in}\quad \ L^{q}(\B_{R})\ \ \ \mbox{for}\ \ 1\leq q<6,
$$
from which we immediately have for any vector $\psi\in C^{\infty}_0(\R^3)$,
 \begin{align*}
\langle x\cdot\nabla (v_{k}-v),\psi\rangle=-\langle x\cdot\nabla\psi,v_{k}-v\rangle-\langle v_{k}-v,\psi\rangle\lesssim \|v_{k}-v\|_{L^2}\|\psi\|_{\mathcal{X}_R},
\end{align*}
and
\begin{align*}
\big\langle F(v_{k})-F(v),\psi\big\rangle
=&-\langle U_0\cdot\nabla(v_{k}-v),\psi\rangle-\langle v_{k}\cdot\nabla(v_{k}-v),\psi\rangle\\
&-\langle (v_{k}-v)\cdot\nabla v,\psi\rangle-\langle (v_{k}-v)\cdot\nabla U_0,\psi\rangle\\
=&+\langle U_0\cdot\nabla\psi,v_{k}-v\rangle+\langle v_{k}\cdot\nabla\psi,v_{k}
-v\rangle\\&-\langle (v_{k}-v)\cdot\nabla v,\psi\rangle-\langle (v_{k}-v)\cdot\nabla U_0,\psi\rangle\\
\leq&C \|v_{k}-v\|_{L^3}\|\psi\|_{\mathcal{X}_R}.
\end{align*}
The above inequalities imply \eqref{E3.9}.
\end{proof}

From Lemma \ref{L3.2}, Lemma \ref{L3.3}, we conclude that the operator $S$ satisfies all the requirements of the Leray-Schauder principle, and so we have:  \begin{prop}[Existence in $\B_{R}$]\label{P3.4}
 The system \eqref{E3.4} has a solution $V_{R,\epsilon}\in \mathcal{X}_{R}$.
\end{prop}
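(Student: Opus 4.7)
The plan is to apply the Leray--Schauder fixed point theorem directly to the operator $S: \mathcal{X}_R \to \mathcal{X}_R$ introduced in \eqref{E3.5}. By construction of $\mathbb{T}$ via the Riesz representation theorem and by the equivalence between \eqref{E3.4} and \eqref{E3.5}, a fixed point of $S$ at $\lambda = 1$ is exactly a weak solution $V_{R,\epsilon} \in \mathcal{X}_R$ of \eqref{E3.4}.

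The Leray--Schauder principle has two hypotheses to verify. The first is that $S$ is a continuous and compact operator on $\mathcal{X}_R$, which is precisely the content of Lemma \ref{L3.3}. The second is the existence of a uniform bound
\[
\sup\bigl\{\|v\|_{\mathcal{X}_R} : v = \lambda S(v) \text{ for some } \lambda \in [0,1]\bigr\} < \infty,
\]
which is exactly the conclusion of Lemma \ref{L3.2}: every $v$ satisfying $v = \lambda S(v)$ is a weak solution of \eqref{E3.6} with parameter $\lambda$, and the blowup-contradiction argument used in Lemma \ref{L3.2} yields the $\mathcal{X}_R$-bound independent of $\lambda \in [0,1]$.

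With these two pieces in hand, the Leray--Schauder fixed point theorem produces $V_{R,\epsilon} \in \mathcal{X}_R$ such that $V_{R,\epsilon} = S(V_{R,\epsilon})$, which is a weak solution of \eqref{E3.4} at $\lambda = 1$. Since Lemmas \ref{L3.2} and \ref{L3.3} have already been established, there is no genuine obstacle here; the proof is a one-paragraph invocation of the abstract theorem. The only thing to remark on is that the bound in Lemma \ref{L3.2} depends on $R$ and $\epsilon$, but this dependence is harmless at this stage since $R$ and $\epsilon$ are fixed; the delicate, $R$- and $\epsilon$-uniform estimates needed to pass to the limit $R \to \infty$ and $\epsilon \to 0$ will be the content of the subsequent subsections.
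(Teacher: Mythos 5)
Your proposal is correct and matches the paper exactly: the paper also proves Proposition \ref{P3.4} by a one-line invocation of the Leray--Schauder principle, citing Lemma \ref{L3.2} for the uniform bound on the set $\{v : v = \lambda S(v),\ \lambda \in [0,1]\}$ and Lemma \ref{L3.3} for continuity and compactness of $S$. Your added remark that the $R$- and $\epsilon$-dependence of the bound is harmless at this stage is accurate and consistent with the paper's subsequent development.
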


Now, we wish to extend statements of Proposition \ref{P3.4} from $\B_R$ to $\R^{3}$. We will establish the following uniform bound independent of $\epsilon, R$:

\begin{lem}\label{L3.5}
Let $V_{R,\epsilon}$ be the solution of \eqref{E3.4}, we have the a priori bound
$$
\int_{\R^{3}}\big(\epsilon|\nabla V_{R,\epsilon}|^{2}+|(-\Delta)^{\frac{\alpha}{2}}V_{R,\epsilon}|^{2}+\frac{5-4\alpha}{4\alpha}|V_{R,\epsilon}|^{2}\big)\,{\rm d}x\leq C(U_{0}).
$$
\end{lem}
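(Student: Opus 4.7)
The plan is to extend the blow-up/contradiction scheme of Lemma \ref{L3.2} so that the resulting bound is independent of both $R$ and $\epsilon$. Suppose for contradiction that there exist sequences $R_k$, $\epsilon_k$ and solutions $V_k := V_{R_k,\epsilon_k} \in \mathcal{X}_{R_k}$ of \eqref{E3.4} with $\lambda = 1$ such that
$$L_k^2 := \int_{\R^3}\bigl(\epsilon_k|\nabla V_k|^2 + |(-\Delta)^{\alpha/2}V_k|^2\bigr)\,{\rm d}x \longrightarrow +\infty.$$
Testing \eqref{E3.6} with $V_k$ yields, exactly as in Lemma \ref{L3.2},
$$L_k^2 + \frac{5-4\alpha}{4\alpha}\|V_k\|_{L^2(\R^3)}^2 = -\int_{\R^3}U_0\cdot\nabla U_0\cdot V_k\,{\rm d}x - \int_{\R^3}(V_k\cdot\nabla U_0)\cdot V_k\,{\rm d}x.$$

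Normalize $\tilde V_k := V_k/L_k$. The uniform bound $\|\tilde V_k\|_{H^\alpha(\R^3)} \le 1$ produces, along a subsequence, $\tilde V_k \rightharpoonup V$ weakly in $H^\alpha(\R^3)$ and strongly in $L^q_{\rm loc}(\R^3)$ for every $q < 6/(3-2\alpha)$ by Lemma \ref{L2.8}. The pointwise decay of $U_0$ and $\nabla U_0$ from Proposition \ref{prop-U1} combined with strong local convergence and a tail estimate using $\tilde V_k \otimes \tilde V_k \in L^{3/(3-2\alpha)}$ lets me pass to the limit in the rescaled energy identity to obtain
$$1 + \frac{5-4\alpha}{4\alpha}\int_{\R^3}|V|^2\,{\rm d}x = -\int_{\R^3}(V\cdot\nabla U_0)\cdot V\,{\rm d}x.$$
Rescaling \eqref{E3.6} itself and testing against arbitrary $\varphi \in C^\infty_{0,\sigma}(\R^3)$, the artificial viscosity and fractional diffusion terms each carry a factor $1/L_k$ and vanish in the limit, so $V \in H^\alpha(\R^3)$ with $\mathrm{div}\,V = 0$ solves the stationary Euler system
$$V\cdot\nabla V + \nabla P = 0\qquad \text{in}\ \R^3$$
in the sense of distributions, for some pressure $P$ extracted via de Rham's theorem.

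The contradiction will come from showing the right-hand side of the limiting energy identity vanishes. Using $V\cdot\nabla V = -\nabla P$, the divergence-free condition $\mathrm{div}\,U_0 = 0$, and $\mathrm{div}\,V = 0$,
$$-\int_{\R^3}(V\cdot\nabla U_0)\cdot V\,{\rm d}x = \int_{\R^3}(V\cdot\nabla V)\cdot U_0\,{\rm d}x = -\int_{\R^3}U_0\cdot\nabla P\,{\rm d}x = \int_{\R^3}P\,\mathrm{div}\,U_0\,{\rm d}x = 0,$$
which forces $1 + \frac{5-4\alpha}{4\alpha}\|V\|_{L^2}^2 = 0$, an impossibility. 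The main technical obstacle is the rigorous justification of these integrations by parts on the whole space: unlike in Lemma \ref{L3.2}, there is no compact boundary where Lemma \ref{L3.1} could pin $P$ to a constant, so I must rely on integrability and decay alone to kill the boundary contributions over large spheres. Concretely, the Sobolev embedding gives $V \in L^{6/(3-2\alpha)}$ and hence $V \otimes V \in L^{3/(3-2\alpha)}$; Calder\'on--Zygmund theory applied to $-\Delta P = \partial_i\partial_j(V_iV_j)$ places $P$ in the same space; and the pointwise decay $|U_0(x)| + |x||\nabla U_0(x)| \lesssim \langle x\rangle^{-(2\alpha-1)}$ from Proposition \ref{prop-U1} must be coupled with these bounds to show the flux integrals on $\partial B_\rho$ tend to zero as $\rho \to \infty$. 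Calibrating these decay exponents sharply in the range $\alpha \in (5/8, 1]$ will be the delicate step.
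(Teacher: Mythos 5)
Your proposal follows the paper's proof of Lemma \ref{L3.5} essentially verbatim through the contradiction setup, the normalization $\tilde V_k = V_k/L_k$, the compactness step, the passage to the limiting energy identity, and the observation that the limit $V$ solves the stationary Euler system on $\R^3$; the genuine difference is in how the final contradiction is extracted. The paper never estimates the pressure at this stage: it uses $U_0\cdot\nabla U_0\in L^2(\R^3)$ to place $U_0$ in $W^{1,q}(\R^3)$ with $\operatorname{div}U_0=0$, approximates $U_0$ by $\varphi_n\in C^\infty_{0,\sigma}(\R^3)$ with $\nabla\varphi_n\to\nabla U_0$ in $L^q$, and notes that testing the weak Euler equation against a compactly supported divergence-free field gives $\int_{\R^3}(V\otimes V):\nabla\varphi_n\,\mathrm{d}x=0$ identically (the pressure pairs with $\operatorname{div}\varphi_n=0$ and no boundary term ever appears); passing to the limit in $n$ using $V\otimes V\in L^1\cap L^{3/(3-2\alpha)}$ gives $\int_{\R^3}(V\otimes V):\nabla U_0\,\mathrm{d}x=0$, contradicting the energy identity. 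You instead introduce $P$ via de Rham, place $P\in L^{3/(3-2\alpha)}(\R^3)$ by Calder\'on--Zygmund, and kill the flux $\int_{\partial B_\rho}P\,U_0\cdot n\,\mathrm{d}S$ over large spheres. This route does close: since $\int_1^\infty\int_{\partial B_\rho}|P|^{3/(3-2\alpha)}\,\mathrm{d}S\,\mathrm{d}\rho<\infty$, there is a sequence $\rho_j\to\infty$ along which H\"older on the sphere gives $\int_{\partial B_{\rho_j}}|P|\,\mathrm{d}S=o\big(\rho_j^{2\alpha-1}\big)$, exactly cancelling $|U_0|\lesssim\rho_j^{-(2\alpha-1)}$ for every $\alpha\in(5/8,1]$ --- so the ``delicate calibration'' you flag succeeds, but it is strictly more work than the paper's density argument, whose whole point is to avoid any quantitative information on $P$. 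One bookkeeping remark: the lemma bounds the full quantity including $\tfrac{5-4\alpha}{4\alpha}\|V_{R,\epsilon}\|_{L^2}^2$, and the paper accordingly puts the $L^2$ term inside $L_k^2$ before assuming $L_k\to\infty$; with your definition of $L_k^2$ (gradient terms only) you must also exclude the case where only $\|V_k\|_{L^2}$ blows up, which does follow from the tested energy identity (it yields $\|V_k\|_{L^2}\le C(1+L_k)$) but should be stated.
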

\begin{proof} Since  the a priori bound is independent of $\lambda$ by Lemma \ref{L3.2}, we suppose $\lambda=1$ at the moment. Now we proceed, as previously, by a contradiction argument. In fact, suppose that its assertion is not true. Then there exist sequences $\B_{k}\triangleq\B_{R_{k}},\,\, \epsilon_{k}$ and
$V_{k}\triangleq V_{R_{k},\epsilon_{k}}\in \mathcal{X}_{R_{k}}$ such that
$$
L_{k}^{2}\triangleq\int_{\R^{3}}\Big(\epsilon_{k}|\nabla V_{k}|^{2}+|(-\Delta)^{\frac{\alpha}{2}}V_{k}|^{2}+\frac{5-4\alpha}{4\alpha}|V_{k}|^{2}\Big)\,{\rm d}x\to+\infty,
$$
where $\epsilon_{k}\to \epsilon_{0}\in [0,1]$.

Multiplying the equation \eqref{E3.4} by $V_{k}$ and integrating by parts in $\B_{k}$, we have
\begin{equation}\label{E3.10}
L_{k}^{2}=\int_{\B_{k}}\Big(-U_{0}\cdot\nabla U_{0}
-V_{k}\cdot\nabla U_{0}\Big)V_{k}\,{\rm d}x,
\end{equation}
where we have used the fact that
$$
\int_{\B_{k}}\Big(\big(U_{0}+V_{k}\big)\nabla V_{k}\Big)V_{k}\,{\rm d}x =0.
$$
Now let
$$
\tilde{V}_{k}\triangleq\frac{1}{L_{k}}V_{k}\quad\text{and}\quad \tilde{P}_{k}\triangleq\frac{1}{L_{k}^{2}}P_{k},
$$
then they satisfy
\begin{equation}\label{E3.11}
\begin{split}
\tilde{V}_{k}\cdot\nabla \tilde{V}_{k}+\nabla \tilde{P}_{k}
=\frac{1}{L_{k}}\bigg(&\epsilon_k\Delta \tilde{V}_{k}+(-\Delta)^{\alpha}\tilde{V}_k+{2\alpha-1\over 2\alpha}\tilde{V}_k+\frac{1}{2\alpha}x\cdot\nabla \tilde{V}_{k}\\
&\,\,-\frac{1}{L_{k}}U_{0}\cdot\nabla U_{0}
-U_{0}\cdot\nabla \tilde{V}_{k}-\tilde{V}_{k}\cdot\nabla U_{0}\bigg)
\end{split}
\end{equation}
and
$$
\int_{\B_{k}}\Big(\epsilon_{k}|\nabla \tilde{V}_{k}|^{2}+|(-\Delta)^{\frac{\alpha}{2}}\tilde{V}_{k}|^{2}+\frac{5-4\alpha}{4\alpha}|\tilde{V}_{k}|^{2}\Big)\,{\rm d}x=1.
$$
Thus we could extract a subsequence still denoted by $\tilde{V}_{k}$ such that
\begin{align*}
&\tilde{V}_{k}\rightharpoonup V \ \ \ \ \mbox{in}\ \  H_{0}^{\alpha}(\B_{k});\\
&\tilde{V}_{k}\to V \ \ \ \ \mbox{in}\ \  L^{q}(\Omega')\ \ \mbox{for any bounded}\ \Omega'\subset\R^{3}\ \mbox{and}\ \  1\leq q<\frac{6}{3-2\alpha}.
\end{align*}
Thanks to Proposition \ref{prop-U1}, one  has that for $\alpha>\frac58,$
\begin{equation}\label{E3.11+}
\|U_0\cdot\nabla U_0\|_{L^2(\R^3)}\leq C\big\|\langle\cdot\rangle^{-(4\alpha-1)}\big\|_{L^2(\R^3)}<+\infty.
\end{equation}
Therefore we have by the Cauchy-Schwarz inequality that for $\alpha>\frac58,$
\begin{equation}\label{E3.12-}
\int_{\R^{3}}U_{0}\cdot\nabla U_{0}\cdot\tilde{V}_{k}\,{\rm d}x\leq \bigg(\int_{\R^{3}}|U_{0}\cdot\nabla U_{0}|^{2}\,{\rm d}x\bigg)^{\frac{1}{2}}\bigg(\int_{\R^{3}}|\tilde{V}_{k}|^{2}\,{\rm d}x\bigg)^{\frac{1}{2}}<+\infty.
\end{equation}
Multiplying \eqref{E3.10} by $\frac{1}{L_{k}^{2}}$ and taking a limit as $k\to+\infty$, we have by \eqref{E3.12-} that
\begin{equation}\label{E3.12}
1=\int_{\R^{3}}(-V\cdot\nabla U_{0})V\,{\rm d}x=-\int_{\R^{3}}(V_{i} V_{j})\partial_{i}U_{0,j} \,{\rm d}x.
\end{equation}
 From \eqref{E3.11+},  we have $U_{0}\in W^{1,q}(\R^{3})$ with some large enough $q$ and $ {\rm div}\ U_{0}=0$. By the density argument, there exists a sequence
$\varphi_{n}\in  C_{0,\sigma}^{\infty}(\R^{3})$ such that
$$
\big\|\nabla (\varphi_{n}-U_{0})\big\|_{L^{q}(\R^{3})}\to0.
$$
From this limitation  and the fact $V\in L^{m}(\R^{3})$ for $1\leq m\leq\frac{6}{3-2\alpha}$ , we immediately have
$$
0=\int_{\R^{3}}(V\otimes V)\cdot\nabla\varphi_{n} \,{\rm d}x\to \int_{\R^{3}}(V\otimes V)\cdot\nabla U_{0} \,{\rm d}x=-1,
$$
which is a contradiction.
\end{proof}

\begin{prop}[Existence in $\R^{3}$]\label{L3.16}
The system \eqref{E3.3} has a solution $V_{\epsilon}\in H_{\sigma}^{1}(\R^{3})\cap H_{\sigma}^{\alpha}(\R^{3})$ for $\lambda=1$.
\end{prop}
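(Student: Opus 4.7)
The plan is a standard exhaustion argument: we take the approximate solutions $V_{R,\epsilon}$ produced by Proposition \ref{P3.4} on a sequence of balls $\B_{R_k}$ with $R_k\to +\infty$, invoke the uniform a priori bound of Lemma \ref{L3.5} (which is independent of both $R$ and $\epsilon$), and pass to the limit in the weak formulation \eqref{E3.4} at $\lambda=1$.

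First, I would fix $\epsilon>0$ and extend each $V_{R_k,\epsilon}$ by zero to all of $\R^3$, so that $V_{R_k,\epsilon}\in H^1_{0,\sigma}(\R^3)\cap H^\alpha_{0,\sigma}(\R^3)$. From Lemma \ref{L3.5},
\[
\int_{\R^{3}}\Big(\epsilon|\nabla V_{R_k,\epsilon}|^{2}+|(-\Delta)^{\frac{\alpha}{2}}V_{R_k,\epsilon}|^{2}+\tfrac{5-4\alpha}{4\alpha}|V_{R_k,\epsilon}|^{2}\Big)\,{\rm d}x\leq C(U_{0}),
\]
so after extracting a subsequence (not relabeled) we obtain $V_{R_k,\epsilon}\rightharpoonup V_\epsilon$ weakly in $H^1(\R^3)\cap H^\alpha(\R^3)$, weakly in $L^2(\R^3)$, and by the local compact embedding of Lemma \ref{L2.8} together with Rellich--Kondrachov for $H^1$, strongly in $L^q_{\rm loc}(\R^3)$ for every $q\in[1,6)$. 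The divergence-free condition and the weighted-$L^2$ control persist under these limits, so $V_\epsilon\in H^1_\sigma(\R^3)\cap H^\alpha_\sigma(\R^3)$.

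Next, I would fix an arbitrary test vector $\varphi\in C_{0,\sigma}^\infty(\R^3)$, say with $\supp\varphi\subset \B_{R_0}$, and verify that
\[
\int_{\R^{3}}\!\big(\epsilon\nabla V_{R_k,\epsilon}\cdot\nabla\varphi+(-\Delta)^{\frac{\alpha}{2}}V_{R_k,\epsilon}\cdot(-\Delta)^{\frac{\alpha}{2}}\varphi\big)\,\mathrm{d}x
=\int_{\B_{R_k}}\!\Big(\tfrac{2\alpha-1}{2\alpha}V_{R_k,\epsilon}+\tfrac{1}{2\alpha}x\cdot\nabla V_{R_k,\epsilon}+F(V_{R_k,\epsilon})\Big)\cdot\varphi\,\mathrm{d}x
\]
passes to the limit term by term. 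The two linear dissipative terms on the left and the zero-order $V_{R_k,\epsilon}$-term on the right converge by weak convergence. For the transport term, I would first integrate by parts to transfer the derivative onto $\varphi$ (using $\mathrm{div}\,V_{R_k,\epsilon}=0$ and $\mathrm{div}(x\varphi)=3\varphi+x\cdot\nabla\varphi$), turning it into
\[
\int_{\B_{R_0}} V_{R_k,\epsilon}\cdot\bigl(\tfrac{1}{2\alpha}(3\varphi+x\cdot\nabla\varphi)\bigr)\,\mathrm{d}x,
\]
which converges by local weak (indeed strong) $L^2$ convergence on $\B_{R_0}$. The nonlinear term $(U_0+V_{R_k,\epsilon})\cdot\nabla V_{R_k,\epsilon}$ is handled in the usual way: integrate by parts to write it as $-\int\bigl((U_0+V_{R_k,\epsilon})\otimes V_{R_k,\epsilon}\bigr):\nabla\varphi\,\mathrm{d}x$, then use the strong $L^3_{\rm loc}$ convergence of $V_{R_k,\epsilon}$ on $\supp\varphi$ and the $L^\infty_{\rm loc}$-type control of $U_0$ from Proposition \ref{prop-U1} to pass to the limit. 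The linear drift $V_{R_k,\epsilon}\cdot\nabla U_0$ and the forcing $U_0\cdot\nabla U_0$ pose no difficulty since $\nabla U_0$ and $U_0\cdot\nabla U_0$ lie in appropriate $L^q$ spaces by Proposition \ref{prop-U1}.

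The main obstacle I expect is the transport term $\tfrac{1}{2\alpha}x\cdot\nabla V_{R_k,\epsilon}$, whose coefficient $x$ is unbounded at infinity. Because the test function $\varphi$ is compactly supported, this is harmless at the level of the weak formulation (the integration-by-parts above confines everything to $\B_{R_0}$), but care is required to make sure the identity recovered in the limit holds in the sense of distributions against arbitrary $\varphi\in C_{0,\sigma}^\infty(\R^3)$, not just in a subspace where weighted norms are controlled. Once this is checked, the recovered limit $V_\epsilon$ is a weak solution of \eqref{E3.3} at $\lambda=1$ belonging to $H^1_\sigma(\R^3)\cap H^\alpha_\sigma(\R^3)$, which is the asserted statement.
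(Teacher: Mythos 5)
Your proposal is correct and follows essentially the same route as the paper: extract a subsequence from the $V_{R_k,\epsilon}$ using the uniform bound of Lemma \ref{L3.5}, combine weak convergence in $H^1_\sigma\cap H^\alpha_\sigma$ with strong local $L^q$ convergence, and pass to the limit term by term in the weak formulation against compactly supported divergence-free test functions, treating the convection terms after integration by parts and using the decay of $U_0$. (Only a cosmetic remark: your integrated-by-parts transport term should carry a minus sign, $-\int V\cdot(3\varphi+x\cdot\nabla\varphi)/(2\alpha)\,\mathrm{d}x$, but this does not affect the convergence argument.)
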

\begin{proof}  Let $\B_{R}$ be the ball in $\R^{3}$ with radius, then by Lemma \ref{L3.5} there exists a solution
$V_{R,\epsilon}\in \mathcal{X}_{R}$ of the system \eqref{E3.4} in $\B_{R}$, which satisfies
\begin{equation*}
\int_{\R^{3}}\Big(\epsilon|\nabla V_{R,\epsilon}|^{2}+|(-\Delta)^{\frac{\alpha}{2}}V_{R,\epsilon}|^{2}+\frac{5-4\alpha}{4\alpha}|V_{R,\epsilon}|^{2}\Big)\,{\rm d}x\leq C(U_{0}).
\end{equation*}
Due to the above uniform regularity estimate, there exists a converging subsequence $\{V_{R_{j},\epsilon}\}_{j=1}^{\infty}$ (where $R_{j}\uparrow +\infty$)  such that
\begin{align}\label{weak1}
&V_{R_{j},\epsilon}\rightharpoonup V_{\epsilon}\ \ \ \mbox{in}\ \  H_{\sigma}^{1}(\R^{3})\cap H_{\sigma}^{\alpha}(\R^{3});
\end{align}
\begin{align*}
&V_{R_{j},\epsilon}\rightharpoonup V_{\epsilon}\ \ \ \mbox{in}\ \ L^{q}(\R^{3})\ \ \mbox{with}\ \ 2\leq q\leq6,
\end{align*}
and for any $0<R<+\infty$
\begin{align}\label{strong1}
V_{R_{j},\epsilon}\to V_{\epsilon}\ \ \ \  \mbox{in}\ \  L^{q}(\B_{R})\ \ \mbox{for}\ \ 1\leq q<6,
\end{align}
where we have used the fact that $V_{R_{j},\epsilon}\equiv0$ in $\R^{3}\setminus\B_{R_{j}}$.

We first  show that $V_{\epsilon}$ satisfies \eqref{E3.3} in the sense of
distribution. For $\forall\, \varphi\in C^\infty_{0,\sigma}(\R^3)$, seeding $R_j\to +\infty$ we easily  derive
\begin{equation}\label{1}
\begin{split}
&\int_{\R^3}\epsilon  \nabla V_{R_j,\epsilon} \nabla\varphi \,{\rm d}x+\int_{\R^3}(-\Delta)^{\frac{\alpha}{2}}V_{R_j,\epsilon}(-\Delta)^{\frac{\alpha}{2}}\varphi \,{\rm d}x
+\int_{\R^3}x\cdot \nabla V_{R_j, \epsilon}\varphi \,{\rm d}x\\\to& \int_{\R^3}\epsilon  \nabla V_\epsilon \nabla\varphi\,{\rm d}x+\int_{\R^3}(-\Delta)^{\frac{\alpha}{2}}V_{\epsilon}(-\Delta)^{\frac{\alpha}{2}}\varphi \,{\rm d}x+\int_{\R^3}x\cdot \nabla V_{\epsilon}\varphi \,{\rm d}x.
\end{split}
\end{equation}
Since $\varphi\in C_0^\infty(\mathbb{R}^3)$, we may assume that
$\mathrm{supp}\,\varphi\subset \mathbb{B}_{R}.$
The simple calculation yields
\begin{equation}\label{eq.conv1}
\begin{split}
&\int_{\mathbb{R}^3}V_{R_j,\varepsilon}\cdot\nabla V_{R_j,\varepsilon} \varphi(x)\,{\rm d}x-\int_{\mathbb{R}^3}V_{\varepsilon}\cdot\nabla V_{\varepsilon} \varphi(x)\,{\rm d}x\\
=&\int_{\mathbb{R}^3}\left(V_{R_j,\varepsilon}\otimes\big(V_{R_j,\varepsilon}-V_{\varepsilon}\big)\right):\nabla\varphi(x)\,{\rm d}x
+\int_{\mathbb{R}^3}\left(\big(V_{R_j,\varepsilon}-V_{\varepsilon}\big)\otimes V_{R_j}\right):\nabla\varphi(x)\,{\rm d}x\\
\triangleq&I_1+I_2.
\end{split}
\end{equation}
We choose $R_j$ lager than $R$, by the H\"older inequality, we immediately obtain
\begin{equation*}
I_1=\int_{\mathbb{R}^3}\left(V_{R_j,\varepsilon}\otimes\big(V_{R_j,\varepsilon}-V_{\varepsilon}\big)\right):\nabla\varphi(x)\,{\rm d}x
\leq\big\|V_{R_j,\varepsilon}\big\|_{L^2(\mathbb{R}^3)}\big\|V_{R_j,\varepsilon}\big\|_{L^2(\mathbb{B}_R)}\big\|\nabla\varphi\big\|_{L^2(\mathbb{R}^3)}.
\end{equation*}
This estimate together with the strong convergence  \eqref{strong1} yields
\begin{equation}\label{eq.conv2}
I_1\to0\quad\text{as}\quad R_j\to+\infty.
\end{equation}
Similarly, we have
\begin{equation}\label{eq.conv3}
I_2\to0\quad\text{as}\quad R_j\to+\infty.
\end{equation}
Inserting \eqref{eq.conv2} and \eqref{eq.conv3} into \eqref{eq.conv1} leads to
\begin{equation}\label{eq.I1}
\int_{\mathbb{R}^3}V_{R_j,\varepsilon}\cdot\nabla V_{R_j,\varepsilon} \cdot\varphi(x)\,{\rm d}x\to\int_{\mathbb{R}^3}V_{\varepsilon}\cdot\nabla V_{\varepsilon} \cdot\varphi(x)\,{\rm d}x
\end{equation}
as $j$ goes to infinity.
\smallskip

We observe that
\begin{equation}\label{eq.conv4}
\int_{\mathbb{R}^3}U_0\cdot\nabla V_{R_j,\varepsilon}\cdot \varphi(x)\,{\rm d}x= -\int_{\mathbb{R}^3}V_{R_j,\varepsilon}\otimes U_0\cdot\nabla\varphi(x)\,{\rm d}x.
\end{equation}
Since $|U_0|\leq(1+|x|)^{1-2\alpha}$ and $\varphi\in C_0^\infty(\R^3)$, it easy to check that $U_0\cdot\nabla\varphi(x)\in L^2(\mathbb{R}^2)$. Thus the weak convergence \eqref{weak1}  enables us to infer that
\begin{equation}\label{eq.conv5}
\int_{\mathbb{R}^3}V_{R_j,\varepsilon}\otimes U_0\cdot\nabla\varphi(x)\,{\rm d}x\to \int_{\mathbb{R}^3}V_{\varepsilon}\otimes U_0\cdot\nabla\varphi(x)\,{\rm d}x \qquad\text{as}\,\,\, j\to+\infty.
\end{equation}
Plugging \eqref{eq.conv5} into \eqref{eq.conv4} gives
\begin{equation}\label{eq.I2}
\int_{\mathbb{R}^3}U_0\cdot\nabla V_{R_j,\varepsilon} \cdot\varphi(x)\,{\rm d}x= -\int_{\mathbb{R}^3}V_{\varepsilon}\otimes U_0\cdot\nabla\varphi(x)\,{\rm d}x \quad\text{as}\quad R_j\to+\infty.
\end{equation}
In the same fashion as used in \eqref{eq.I2}, one can conclude  that
\begin{equation}\label{eq.I3}
\int_{\mathbb{R}^3}V_{R_j,\varepsilon} \cdot\nabla U_0 \cdot\varphi(x)\,{\rm d}x= -\int_{\mathbb{R}^3}U_0\otimes V_{\varepsilon}\cdot\nabla\varphi(x)\,{\rm d}x \quad\text{as}\quad R_j\to+\infty.
\end{equation}
Collecting \eqref{eq.I1}, \eqref{eq.I2} and \eqref{eq.I3}, we readily get
\begin{equation*}
\int_{\mathbb{R}^3}F\big( V_{R_j,\varepsilon}\big) \varphi(x)\,{\rm d}x\to  \int_{\mathbb{R}^3}F\big( V_{\varepsilon}\big) \varphi(x)\,{\rm d}x \qquad\text{as}\,\,\, R_j\to+\infty.
\end{equation*}
Thus, we obtain that $V_{\epsilon}$ satisfies \eqref{E3.3} in the sense of distribution. By density argument, for $\forall\, \varphi\in H_{\sigma}^{1}(\R^{3})\cap H_{\sigma}^{\alpha}(\R^{3})$ satisfying
$
\big\||\cdot|\varphi(\cdot)\big\|_{L^{2}(\R^{3})}<+\infty,
$
we have
\begin{equation*}
\int_{\R^{3}}\left(\epsilon\nabla V_{\epsilon}: \nabla \varphi+(-\Delta)^{\frac{\alpha}{2}} V_{\epsilon}\cdot (-\Delta)^{\frac{\alpha}{2}} \varphi\right) \,{\rm d}x
=\int_{\R^3}\Big(\frac{2\alpha-1}{2\alpha}V_{\epsilon}+\frac{1}{2\alpha}x\cdot\nabla V_{\epsilon}+F(V_{\epsilon})\Big)\cdot \varphi \,{\rm d}x.
\end{equation*}
 Finally, according to the weak low semi-continuity of the norm, the
limit function $V_{\epsilon}$ satisfies
\begin{equation}\label{eq.I3-add}
\int_{\R^{3}}\Big(\epsilon|\nabla V_{\epsilon}|^{2}+|(-\Delta)^{\frac{\alpha}{2}}V_{\epsilon}|^{2}+\frac{5-4\alpha}{4\alpha}|V_{\epsilon}|^{2}\Big)\,{\rm d}x\leq C(U_{0}).
\end{equation}
This estimate implies the desired result.
\end{proof}
\medskip

Letting $\epsilon\to0$, we obtain the main result of this subsection by the classical diagonalization argument.

\begin{thm}\label{P3.7}
There is a function $V\in  H_{\sigma}^{\alpha}(\R^{3})$ satisfies system \eqref{E3.2} in the sense of distribution.
\end{thm}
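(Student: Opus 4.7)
The plan is to extract a subsequential weak limit of the family $\{V_\epsilon\}_{\epsilon>0}$ produced by Proposition \ref{L3.16} and verify that the limit $V$ satisfies the $\epsilon=0$ equation \eqref{E3.2} in the sense of distributions. The crucial input is the uniform bound \eqref{eq.I3-add}, which is independent of $\epsilon$. From it, the family $\{V_\epsilon\}$ is bounded in $H^\alpha_\sigma(\R^3)$ and $\{\sqrt{\epsilon}\,\nabla V_\epsilon\}$ is bounded in $L^2(\R^3)$. By reflexivity, there exists a subsequence $\epsilon_k\downarrow 0$ and $V\in H^\alpha_\sigma(\R^3)$ such that
\begin{equation*}
V_{\epsilon_k}\rightharpoonup V\ \text{ in }\ H^\alpha_\sigma(\R^3),\qquad \sqrt{\epsilon_k}\,\nabla V_{\epsilon_k}\rightharpoonup W\ \text{ in }\ L^2(\R^3)
\end{equation*}
for some $W$, and by the compact embedding in Lemma \ref{L2.8} applied on an exhausting sequence of balls together with a diagonal argument,
\begin{equation*}
V_{\epsilon_k}\to V\ \text{ in }\ L^q_{\rm loc}(\R^3)\ \text{ for every }\ q\in[1,\tfrac{6}{3-2\alpha}).
\end{equation*}

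Next, I would fix an arbitrary test vector $\varphi\in C^\infty_{0,\sigma}(\R^3)$ with $\operatorname{supp}\varphi\subset\B_R$ and pass to the limit term-by-term in the weak formulation of \eqref{E3.3}. The viscous perturbation is the easiest: Cauchy--Schwarz gives
\begin{equation*}
\Bigl|\epsilon_k\!\int_{\R^3}\nabla V_{\epsilon_k}\cdot\nabla\varphi\,{\rm d}x\Bigr|\leq\sqrt{\epsilon_k}\,\Bigl(\epsilon_k\!\int_{\R^3}|\nabla V_{\epsilon_k}|^2\,{\rm d}x\Bigr)^{1/2}\|\nabla\varphi\|_{L^2}\longrightarrow 0.
\end{equation*}
The fractional dissipation, the zero-order term $\tfrac{2\alpha-1}{2\alpha}V_{\epsilon_k}$, and the scaling term $\tfrac{1}{2\alpha}x\cdot\nabla V_{\epsilon_k}$ (tested against $\varphi$ after integration by parts, so that $x\cdot\nabla$ falls on $\varphi$) all pass to the limit by weak convergence in $H^\alpha_\sigma$, since their duals against $\varphi\in C^\infty_{0,\sigma}$ are continuous linear functionals on $H^\alpha_\sigma(\R^3)$. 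The forcing $U_0\cdot\nabla U_0$ is independent of $\epsilon_k$, so nothing is needed there.

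The core step, and the main obstacle, is the three nonlinear pieces. For the truly quadratic term I would write, after integrating by parts,
\begin{equation*}
\int_{\R^3}\!V_{\epsilon_k}\!\cdot\nabla V_{\epsilon_k}\cdot\varphi\,{\rm d}x=-\int_{\B_R}\!V_{\epsilon_k}\otimes V_{\epsilon_k}:\nabla\varphi\,{\rm d}x,
\end{equation*}
and then mimic the decomposition used in \eqref{eq.conv1}: writing $V_{\epsilon_k}\otimes V_{\epsilon_k}-V\otimes V=V_{\epsilon_k}\otimes(V_{\epsilon_k}-V)+(V_{\epsilon_k}-V)\otimes V$ and using $L^2_{\rm loc}(\B_R)$-strong convergence of one factor against $L^2$-boundedness of the other to obtain convergence. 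For the cross terms $U_0\cdot\nabla V_{\epsilon_k}\cdot\varphi$ and $V_{\epsilon_k}\cdot\nabla U_0\cdot\varphi$, I would move the derivative onto $\varphi$ (using $\mathrm{div}\,U_0=0$) and appeal to the pointwise decay $|U_0(x)|\lesssim\langle x\rangle^{1-2\alpha}$ from Proposition \ref{prop-U1}, which makes $U_0\otimes\nabla\varphi$ and $\nabla U_0\otimes\varphi$ lie in $L^2(\R^3)$; then weak convergence of $V_{\epsilon_k}$ to $V$ in $L^2_{\rm loc}$ (from its $L^2$-bound and local compactness) closes the passage to the limit. The delicate point here is the linear $x\cdot\nabla V_{\epsilon_k}$ contribution: when tested against $\varphi\in C^\infty_{0,\sigma}$ one gets $-\langle V_{\epsilon_k},x\cdot\nabla\varphi+\varphi\rangle$, which is handled by the $L^2$-bound on $V_{\epsilon_k}$ alone.

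Once every term has been sent to its limit, $V$ satisfies \eqref{E3.2} against all $\varphi\in C^\infty_{0,\sigma}(\R^3)$; divergence-freeness is preserved under weak limits, so $V\in H^\alpha_\sigma(\R^3)$. The pressure $P$ is then recovered by the de Rham theorem as in Lemma \ref{L3.3}. Finally, weak lower semicontinuity of the norm combined with \eqref{eq.I3-add} delivers the quantitative bound
\begin{equation*}
\int_{\R^3}\Bigl(|(-\Delta)^{\alpha/2}V|^2+\tfrac{5-4\alpha}{4\alpha}|V|^2\Bigr)\,{\rm d}x\leq C(U_0),
\end{equation*}
which is natural to record for the subsequent regularity arguments in the paper. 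The only real subtlety in the whole scheme is isolating the local strong convergence needed for the quadratic term; every other passage is either weak convergence against a fixed dual element or the trivial $\sqrt{\epsilon_k}$ smallness of the viscous correction.
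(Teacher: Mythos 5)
Your argument is correct and is exactly the ``classical diagonalization argument'' the paper invokes in one sentence for Theorem \ref{P3.7}: you extract a weak $H^\alpha_\sigma$ limit from the $\epsilon$-uniform bound \eqref{eq.I3-add}, kill the viscous term via the $\sqrt{\epsilon_k}$ factor, and pass to the limit term by term just as the paper does in its detailed $R_j\to\infty$ passage in Proposition \ref{L3.16}. The only blemish is immaterial: integrating $x\cdot\nabla V_{\epsilon_k}$ by parts against $\varphi$ in $\R^3$ produces $-\langle V_{\epsilon_k},\,x\cdot\nabla\varphi+3\varphi\rangle$ rather than $-\langle V_{\epsilon_k},\,x\cdot\nabla\varphi+\varphi\rangle$, but either way the pairing is against a fixed compactly supported function and converges under weak $L^2$ convergence.
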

\begin{rem}  i) When $\alpha\leq\frac{5}{8}$, we  do not know whether $V\in H^{\alpha}(\R^{3})$ or not via energy argument due to $U_{0}\cdot \nabla U_{0}\not\in L^{2}(\R^{n})$, and so  we can not construct solutions of \eqref{E1.4a}  by the blow-up argument for this case.

(ii) For $\alpha=1$, the distributional solution of system \eqref{E3.2} established in Theorem \ref{P3.7} that for any $\varphi\in H^{1}(\R^{3})$ with
$$\big\||\cdot|\varphi(\cdot)\big\|_{L^{2}(\R^{3})}<+\infty,$$ we have
\begin{equation}\label{WEN}
\int_{\R^{3}}\nabla V:\nabla \varphi \,{\rm d}x-\int_{\R^{3}}P\text{div}\,\varphi\,\mathrm{d}x
=\int_{\R^3}\Big(\frac{1}{2}V+\frac{1}{2}x\cdot\nabla V+F(V)\Big)\cdot \varphi \,{\rm d}x.
\end{equation}
 This is a starting  point for the study of decay estimate of the solution $V(x)$.
\end{rem}

\subsection{Improved regularity of  $V(x)$}

First of all, we review the following fractional Leibniz estimate which was shown in  \cite{Gr}:

Let
$$
\alpha>0,\, 1\leq p\leq+\infty,\,\, 1<p_{1}, p_{1}, q_{1}, q_{2}\leq+\infty, \,\,\frac{1}{p}=\frac{1}{p_{1}}+\frac{1}{q_{1}}=\frac{1}{p_{2}}+\frac{1}{q_{2}},
$$
then for $f,g \in C_{0}^{\infty}(\R^{n})$, we have
\begin{equation}\label{Leib}
\|(-\Delta)^{\frac{\alpha}{2}}(fg)\|_{L^{p}(\R^{n})}\leq C \|(-\Delta)^{\frac{\alpha}{2}}f\|_{L^{p_{1}}(\R^{n})}\|g\|_{L^{q_{1}}(\R^{n})}
+C\|(-\Delta)^{\frac{\alpha}{2}}g\|_{L^{p_{2}}(\R^{n})}\|f\|_{L^{q_{2}}(\R^{n})}.
\end{equation}

Next we consider some basic properties of  the following non-local Stokes operator:
\begin{equation}\label{S1}
\left\{\begin{aligned}
(-\Delta)^{\alpha}u+\lambda u+\nabla q&=f(x), \quad\ \ x\in \R^{n},\\
\qquad\text{\quad \quad div}\ u&=0, \ \ \ \ \ \ \ \ \ x\in \R^{n},
\end{aligned}\right.
\end{equation}
where $\lambda>0$.
\begin{lem}\label{L3.9-}
Let $f(x)\in C_{0}^{\infty}(\R^{n})$, then system \eqref{S1} admits a solution $(u,q)$ such that
\begin{equation}\label{elliptic estimate}
\|(-\Delta)^{\alpha}u\|_{L^{p}(\R^{n})}+\lambda^{\frac{1}{2}}\|(-\Delta)^{\frac{\alpha}{2}}u\|_{L^{p}(\R^{n})}+\lambda\|u\|_{L^{p}(\R^{n})}+\|\nabla q\|_{L^{p}(\R^{n})}
\leq C \|f\|_{L^{p}(\R^{n})}
\end{equation}
with $1<p<+\infty$.
\end{lem}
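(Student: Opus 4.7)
The plan is to reduce \eqref{S1} to a scalar problem by isolating the pressure and applying the Leray--Hopf projector $\mathbb{P}=\mathrm{Id}-\nabla\Delta^{-1}\mathrm{div}$. Taking the divergence of the first equation and using $\mathrm{div}\, u=0$ yields $\Delta q=\mathrm{div}\, f$, so $\nabla q=\nabla\Delta^{-1}\mathrm{div}\, f$, which up to signs is of the form $R_iR_j f^j$ with the $R_i$ the Riesz transforms. Since the Riesz transforms are bounded on $L^p(\R^n)$ for $1<p<+\infty$ by classical Calder\'on--Zygmund theory, this already delivers $\|\nabla q\|_{L^p(\R^n)}\leq C\|f\|_{L^p(\R^n)}$.

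Applying $\mathbb{P}$ to the equation and using $\mathbb{P} u=u$, $\mathbb{P}\nabla q=0$, the velocity problem reduces to
$$
\bigl((-\Delta)^\alpha+\lambda\bigr)u=\mathbb{P} f,
$$
which I would solve explicitly by setting
$$
\widehat{u}(\xi)=\frac{1}{|\xi|^{2\alpha}+\lambda}\,\widehat{\mathbb{P} f}(\xi).
$$
The three estimates on $u$ in \eqref{elliptic estimate} then amount to showing that the Fourier multipliers
$$
m_1(\xi)=\frac{|\xi|^{2\alpha}}{|\xi|^{2\alpha}+\lambda},\qquad m_2(\xi)=\frac{\lambda^{1/2}|\xi|^{\alpha}}{|\xi|^{2\alpha}+\lambda},\qquad m_3(\xi)=\frac{\lambda}{|\xi|^{2\alpha}+\lambda}
$$
are $L^p$-Fourier multipliers whose operator norm is uniform in $\lambda>0$. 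The key observation is that the rescaling $\xi=\lambda^{1/(2\alpha)}\eta$ turns each $m_j$ into a fixed $\lambda$-independent symbol, namely $|\eta|^{2\alpha}/(|\eta|^{2\alpha}+1)$, $|\eta|^{\alpha}/(|\eta|^{2\alpha}+1)$ and $1/(|\eta|^{2\alpha}+1)$ respectively, and the $L^p$-multiplier norm is invariant under such Euclidean dilations.

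It therefore remains to verify that these three fixed symbols $\tilde m_j$ satisfy the Mikhlin--H\"ormander condition
$$
|\eta|^{|\beta|}\bigl|\partial^\beta \tilde m_j(\eta)\bigr|\leq C_\beta\qquad\text{for all}\;|\beta|\leq \lfloor n/2\rfloor+1,\;\eta\in\R^n\setminus\{0\};
$$
the multiplier theorem will then provide $L^p$-boundedness for $1<p<+\infty$. Combined with the Calder\'on--Zygmund bound $\|\mathbb{P} f\|_{L^p}\leq C\|f\|_{L^p}$, this gives \eqref{elliptic estimate}, and plugging the resulting $u$ and $q=\Delta^{-1}\mathrm{div}\, f$ back into \eqref{S1} confirms that we have indeed produced a solution.

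The main technical obstacle is the verification of Mikhlin's condition near $\eta=0$, since $|\eta|^{2\alpha}$ is only H\"older continuous there when $2\alpha\notin 2\N$. To handle this I would write $\tilde m_j(\eta)=h_j(|\eta|^{2\alpha})$ with smooth bounded profiles $h_1(t)=t/(t+1)$, $h_2(t)=\sqrt{t}/(t+1)$ and $h_3(t)=1/(t+1)$ on $[0,+\infty)$, and apply the Fa\`a di Bruno formula. Each derivative $\partial^\beta\tilde m_j(\eta)$ is then a finite sum of terms of the shape $h_j^{(k)}(|\eta|^{2\alpha})\,|\eta|^{2\alpha k-|\beta|}$ multiplied by harmless smooth factors depending only on $\eta/|\eta|$; the bound $|h_j^{(k)}(t)|\leq C_k(1+t)^{-1}$ together with the prefactor $|\eta|^{|\beta|}$ then absorbs precisely the singularity, giving the uniform bound. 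For the special case $\alpha=1$ this reduces to the classical Stokes resolvent estimate, which is an additional sanity check on the argument.
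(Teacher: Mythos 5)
Your proposal follows essentially the same route as the paper: the pressure is recovered from $\Delta q=\mathrm{div}\,f$ by Calder\'on--Zygmund theory, the velocity is given by the Fourier multiplier $(\lambda+|\xi|^{2\alpha})^{-1}$ composed with the Leray projector, and the three terms in \eqref{elliptic estimate} are bounded as $\lambda$-uniform $L^p$ multipliers (the paper treats the $\lambda^{1/2}$ term by interpolating between its $H^{2\alpha}_p$ and $L^p$ bounds rather than as a separate symbol, and is in fact less careful than you are, merely noting $m\in L^{\infty}$ before invoking Calder\'on--Zygmund). One small correction: the claimed bound $|h_j^{(k)}(t)|\leq C_k(1+t)^{-1}$ fails near $t=0$ for $h_2(t)=\sqrt{t}/(1+t)$ when $k\geq 1$ (the derivatives degenerate like $t^{1/2-k}$), but the corresponding Fa\`a di Bruno term is still $O\bigl(|\eta|^{\alpha-|\beta|}\bigr)$, so the Mikhlin--H\"ormander condition and hence your conclusion stand.
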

\begin{proof}  We introduce
\begin{align*}
q\triangleq-\partial_{i}\Gamma\ast f_{i};\ \ \ \ \   u=(u_{i})\triangleq\big(\{\delta_{ij}\mathcal{B}^{\lambda}_{\alpha}+\partial_{i}\partial_{j}\Gamma\ast\mathcal{B}^{\lambda}_{\alpha}\}\ast f_{j}\big)
\end{align*}
where $\Gamma$, $\mathcal{B}^{\lambda}_{\alpha}$ are the fundamental solutions of the operator $-\Delta$ and  $(-\Delta)^{\alpha}+\lambda I$ respectively, i.e.
$$
\Gamma(x)= \mathcal{F}^{-1}[|\xi|^{-2}]=\frac{1}{(n-2)\omega_{n}|x|^{n-2}}\ \ \mbox{for}\ \ n\geq 3
$$
and
\begin{align*}
\mathcal{B}^{\lambda}_{\alpha}(x)&=\mathcal{F}^{-1}\Big[\frac{1}{\lambda+|\xi|^{\alpha}}\Big]=\frac{1}{(2\pi)^{3}}\int_{\R^{3}}\frac{1}{\lambda+|\xi|^{\alpha}}e^{i\xi\cdot x}\,{\rm d}\xi\\&
=\int_{\R^{3}}\int_{0}^{+\infty}e^{-(\lambda+|\xi|^{\alpha})t}e^{i\xi\cdot x}\,{\rm d}t{\rm d}\xi=\int_{0}^{+\infty}e^{-\lambda t}G_{t}^{\alpha}(x)\,{\rm d}t\in C^{\infty}(\R^{n}\setminus \{0\})
\end{align*}
which is usually called as Besesel potentials. Now we define $u_{1}(x,t)=(G_{t}^{\alpha}\ast f)(x,t)$, then
$$u_{1}(x,t)\in C^{\infty}(\R^{n}\times[0,\infty))\cap L^{\infty}(\R^{n}\times[0,\infty))$$ and $\partial_{t}u_{1}+(-\Delta)^{\alpha}u_{1}=0,\ \ u_{1}(x,0)=f(x)$
for $(x,t)\in \R^{n}\times(0,+\infty)$, from which we immediately infer  $\hat{u}_{1}(x):=(\mathcal{B}^{\lambda}_{\alpha}\ast f)(x)$ is a smooth solution of
$$
(-\Delta)^{\alpha}u+\lambda u=f(x), \quad\ \ x\in \R^{n}.
$$
Thus, $(u,q)$ fulfills equation \eqref{S1}. In addition,  we have
$$
(-\Delta)^{\alpha}u_{j}=\mathcal{F}^{-1}\left[\frac{|\xi|^{2\alpha}}{\lambda+|\xi|^{2\alpha}}\left(\delta_{ij}+\frac{\xi_{i}\xi_{j}}{|\xi|^{2}}\right)\mathcal{F}f_{j}\right]
\triangleq \mathcal{F}^{-1}[m(\xi)\mathcal{F}f_{j}]
$$
with $m(\xi)\in L^{\infty}(\R^{n})$, then by the Calderon-Zygmund inequality, we derive
$$
\|(-\Delta)^{\alpha}u_{j}\|_{L^{p}(\R^{n})}\leq C \|f\|_{L^{p}(\R^{n})}.
$$
On the other hand, we note
$$
\lambda u_{j}=\mathcal{F}^{-1}\left[\frac{\lambda}{\lambda+|\xi|^{\alpha}}\left(\delta_{ij}+\frac{\xi_{i}\xi_{j}}{|\xi|^{2}}\right)\mathcal{F}f_{j}\right]
\triangleq \mathcal{F}^{-1}[m_{1}(\xi)\mathcal{F}f_{j}]
$$
with $m_{1}(\xi)\in L^{\infty}(\R^{n})$, and derive, by using the Calderon-Zygmund inequality again
$$
\|\lambda u_{j}\|_{L^{p}(\R^{n})}\leq C \|f\|_{L^{p}(\R^{n})}.
$$
Due to the the following interpolation equality
$$
\|u\|_{H^{\alpha}_{p}(\R^{n})}\leq C\|u\|^{\frac{1}{2}}_{H^{2\alpha}_{p}(\R^{n})}\|u\|^{\frac{1}{2}}_{L^{p}(\R^{n})},
$$
we obtain
$$
\lambda^{\frac{1}{2}}\|(-\Delta)^{\frac{\alpha}{2}}u\|_{L^{p}(\R^{n})}\leq C \|f\|_{L^{p}(\R^{n})}.
$$
Finally, by the elliptic estimates, we derive
$$
\|\nabla q\|_{L^{p}(\R^{n})}\leq C \|f\|_{L^{p}(\R^{n})}.
$$
Combining the above discussion, we complete the proof.
\end{proof}

Combining  Lemma \ref{L3.9-} with a density argument, we obtain immediately the following regularity result.
\begin{cor}\label{RC}
Let $f\in L^{p}(\R^{n})$ with $1<p<+\infty$, then equation \eqref{S1} admits a unique strong solution $(u,q)\in H^{2\alpha}_{p}(\R^{n})\times \dot{H}^{1}(\R^{n})$ satisfying \eqref{elliptic estimate}
\end{cor}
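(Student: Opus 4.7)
The plan is to obtain the result by a standard density argument based on the a priori estimate \eqref{elliptic estimate} of Lemma \ref{L3.9-}, treating existence and uniqueness separately.

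\textbf{Existence.} Given $f\in L^p(\R^n)$, I would choose an approximating sequence $\{f_k\}\subset C_0^\infty(\R^n)$ with $f_k\to f$ in $L^p(\R^n)$. By Lemma \ref{L3.9-}, each $f_k$ produces a smooth solution pair $(u_k,q_k)$ of \eqref{S1} with right-hand side $f_k$, satisfying the full estimate \eqref{elliptic estimate}. Applying the same estimate to the differences $(u_k-u_l,q_k-q_l)$, which solve \eqref{S1} with right-hand side $f_k-f_l$, one gets
\[
\|(-\Delta)^{\alpha}(u_k-u_l)\|_{L^p}+\lambda^{\frac12}\|(-\Delta)^{\frac{\alpha}{2}}(u_k-u_l)\|_{L^p}+\lambda\|u_k-u_l\|_{L^p}+\|\nabla(q_k-q_l)\|_{L^p}\le C\|f_k-f_l\|_{L^p},
\]
so $\{u_k\}$ is Cauchy in $H^{2\alpha}_p(\R^n)$ and $\{\nabla q_k\}$ is Cauchy in $L^p(\R^n)$. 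Let $u$ and $\nabla q$ be the corresponding limits; since $\nabla q_k$ is a gradient in $L^p$, its limit is the gradient (distributionally) of some $q\in \dot H^1(\R^n)$ (defined up to an additive constant). Passing to the limit in the equation $(-\Delta)^\alpha u_k+\lambda u_k+\nabla q_k=f_k$ in the sense of distributions, together with $\mathrm{div}\,u_k=0$, yields \eqref{S1} for $(u,q)$. Lower semicontinuity of the $L^p$ norms under the strong $L^p$ convergence then transfers \eqref{elliptic estimate} from the approximate solutions to $(u,q)$.

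\textbf{Uniqueness.} If $(u^{(1)},q^{(1)})$ and $(u^{(2)},q^{(2)})$ are two solutions in $H^{2\alpha}_p(\R^n)\times \dot H^1(\R^n)$ with the same $f$, their difference solves the homogeneous version of \eqref{S1}. I would approximate this difference by a Schwartz sequence (or apply the a priori estimate directly to the difference, invoking density of $C_0^\infty$ and a cutoff–mollification argument justifying that \eqref{elliptic estimate} extends to all of $H^{2\alpha}_p$) to conclude $\lambda\|u^{(1)}-u^{(2)}\|_{L^p}\le C\cdot 0 = 0$, hence $u^{(1)}=u^{(2)}$, and then $\nabla(q^{(1)}-q^{(2)})=0$ gives $q^{(1)}=q^{(2)}$ modulo constants.

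\textbf{Main obstacle.} The only nontrivial point is making sure the passage to the limit is taken in function spaces consistent with the Stokes structure: one must control $\nabla q_k$ in $L^p$ globally on $\R^n$ (not just locally), and identify the limit of $\nabla q_k$ as the gradient of a well-defined $\dot H^1$ distribution. Both are direct consequences of the pressure estimate already contained in \eqref{elliptic estimate}, so the whole argument reduces to routine linear-PDE bookkeeping once Lemma \ref{L3.9-} is in hand.
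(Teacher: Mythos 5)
Your proposal is correct and is essentially the paper's own argument: the paper disposes of Corollary \ref{RC} in one line, ``combining Lemma \ref{L3.9-} with a density argument,'' which is exactly the approximation-by-$C_0^\infty$ scheme you lay out, including the Cauchy-sequence bookkeeping for $u_k$ in $H^{2\alpha}_p(\R^n)$ and for $\nabla q_k$ in $L^p(\R^n)$. The one point worth tightening is uniqueness: the estimate \eqref{elliptic estimate} is derived for the solution given by the explicit representation in Lemma \ref{L3.9-}, so applying it to the difference of two arbitrary solutions quietly presupposes what you are trying to prove; a clean fix is to take the divergence of the homogeneous system to see that $\nabla q$ is harmonic and in $L^p(\R^n)$, hence zero, after which $\big((-\Delta)^{\alpha}+\lambda\big)u=0$ with $u\in L^p(\R^n)$ forces $\hat u=0$ as a tempered distribution --- which is also the spirit of the uniqueness argument the paper actually writes out for Lemma \ref{L3.9}.
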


Following the argument of \cite{Kry}, we present the following regularity lemma of the nonlocal elliptic operator, which plays the  key role in improving
regularity of solution $V(x)$.

\begin{lem}\label{L3.9}
Let $p\in(1, +\infty)$, $g, f_{1},..., f_{n}\in L^{p}(\R^{n})$ and $\lambda>0$, the equation
\begin{equation}\label{R1}
(-\Delta)^{\alpha}u(x)+\lambda u+\nabla q=\partial_{i}f_{i}(x)+g(x)\ \ \ x\in\R^{n}
\end{equation}
has a  solution $u\in H_{p}^{2\alpha-1}(\R^{n})$ such that
\begin{equation}\label{R1.1}
\|(-\Delta)^{\frac{2\alpha-1}{2}}u\|_{L^{p}(\R^{n})}+\lambda^{\frac{2\alpha-1}{2\alpha}}\|u\|_{L^{p}(\R^{n})}\leq C \sum_{i=1}^{n}\|f_{i}\|_{L^{p}(\R^{n})}+\lambda^{-\frac{1}{2\alpha}}\|g\|_{L^{p}(\R^{n})}.
\end{equation}
Furthermore, this  equation can only have one solution in $L^{p}(\R^{n})$.
\end{lem}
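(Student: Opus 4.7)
The plan is to solve \eqref{R1} via Fourier multipliers and derive \eqref{R1.1} from the Mikhlin--H\"ormander multiplier theorem, using the natural rescaling $\xi=\lambda^{1/(2\alpha)}\eta$ to isolate every $\lambda$-dependence into an explicit prefactor. First I would eliminate the pressure by applying the Leray--Hopf projection $\mathbb{P}=I-\nabla(-\Delta)^{-1}\operatorname{div}$; since $\mathbb{P}\nabla q\equiv 0$, the problem reduces to $\bigl((-\Delta)^{\alpha}+\lambda\bigr)u=\mathbb{P}(\partial_{i}f_{i}+g)$. I then split $u=u^{(1)}+u^{(2)}$, where $u^{(1)}$ is sourced by $g$ and $u^{(2)}$ by $\partial_{i}f_{i}$, each defined as the inverse Fourier transform of $\widehat{\mathbb{P}(\cdot)}/(|\xi|^{2\alpha}+\lambda)$ applied to the respective source.

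The main step is to check four Fourier multipliers: $m_{1}(\xi)=1/(|\xi|^{2\alpha}+\lambda)$, $m_{2}(\xi)=\xi_{j}/(|\xi|^{2\alpha}+\lambda)$, $m_{3}(\xi)=|\xi|^{2\alpha-1}/(|\xi|^{2\alpha}+\lambda)$ and $m_{4}(\xi)=|\xi|^{2\alpha-1}\xi_{j}/(|\xi|^{2\alpha}+\lambda)$. Under $\xi=\lambda^{1/(2\alpha)}\eta$ these become $\lambda^{-1}$, $\lambda^{-(2\alpha-1)/(2\alpha)}$, $\lambda^{-1/(2\alpha)}$ and $1$ respectively, each multiplying a fixed ($\lambda$-independent) symbol in $\eta$. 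A direct inspection confirms the Mikhlin estimate $|\eta|^{|\beta|}|\partial_{\eta}^{\beta}\phi(\eta)|\le C_{\beta}$: near the origin the factors $|\eta|^{2\alpha-1}$ are tamed by the weight $|\eta|^{|\beta|}$ (since $\alpha>0$), while at infinity each symbol decays like $|\eta|^{-1}$ with derivatives of matching order. Combined with the $L^{p}$-boundedness of $\mathbb{P}$, this yields
\[
\|(-\Delta)^{(2\alpha-1)/2}u^{(2)}\|_{L^{p}}+\lambda^{(2\alpha-1)/(2\alpha)}\|u^{(2)}\|_{L^{p}}\le C\sum_{i=1}^{n}\|f_{i}\|_{L^{p}},
\]
\[
\|(-\Delta)^{(2\alpha-1)/2}u^{(1)}\|_{L^{p}}+\lambda^{(2\alpha-1)/(2\alpha)}\|u^{(1)}\|_{L^{p}}\le C\lambda^{-1/(2\alpha)}\|g\|_{L^{p}},
\]
which sum to \eqref{R1.1}.

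For uniqueness, if $u\in L^{p}(\R^{n})$ solves the homogeneous version of \eqref{R1}, then applying $\mathbb{P}$ gives $\bigl((-\Delta)^{\alpha}+\lambda\bigr)u=0$ in the sense of tempered distributions; taking the Fourier transform leads to $(\lambda+|\xi|^{2\alpha})\widehat{u}=0$, and since $\lambda+|\xi|^{2\alpha}>0$ everywhere one concludes $\widehat{u}=0$ and hence $u\equiv 0$. The main obstacle I foresee is the limited smoothness of $|\xi|^{2\alpha-1}$ at the origin, which prevents a naive classical differentiation of the symbols; a clean remedy is to decompose using a smooth cutoff at $|\eta|\sim 1$, handling the low-frequency part via the pointwise kernel bounds of Lemma \ref{L2.1}(iii)--(iv) and the high-frequency part via the standard Mikhlin theorem, or equivalently to appeal to the homogeneous multiplier theorem used in Krylov's framework \cite{Kry} that only demands the scale-invariant bounds verified above.
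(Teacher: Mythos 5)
Your proposal is correct in substance and reaches the same estimate, but it is organized differently from the paper's argument. For existence, the paper does not verify the divergence-form multipliers directly: it first solves the non-divergence-form resolvent problem $(-\Delta)^{\alpha}v_{i}+\lambda v_{i}+\nabla q_{i}=f_{i}$ (and likewise for $g$) via Lemma \ref{L3.9-}/Corollary \ref{RC}, obtaining the full $H^{2\alpha}_{p}$ bound with the $\lambda$-weights, and then sets $u=\partial_{i}v_{i}+v$ and interpolates to land in $H^{2\alpha-1}_{p}$; your direct Mikhlin check of $m_{1},\dots,m_{4}$ after the rescaling $\xi=\lambda^{1/(2\alpha)}\eta$ is the same computation unpacked, and it buys a slightly cleaner bookkeeping of the powers of $\lambda$. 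Two small points there: for $m_{3}(\eta)=|\eta|^{2\alpha-1}/(|\eta|^{2\alpha}+1)$ the low-frequency bound $|\eta|^{|\beta|}|\partial^{\beta}m_{3}|\le C$ requires $2\alpha-1\ge 0$, not merely $\alpha>0$ (harmless here since the paper only invokes the lemma for $\alpha>5/8$), and $m_{4}$ is a zero-order symbol that is bounded rather than decaying at infinity, which is all Mikhlin needs. For uniqueness your route is genuinely different: the paper mollifies ($\psi(y)=\int\varphi(y+x)w(x)\,{\rm d}x$), kills the pressure by testing against the divergence-free part of the Helmholtz decomposition of $\varphi$, and concludes by a maximum-principle argument for $(-\Delta)^{\alpha}+\lambda$ at an interior extremum; you instead take Fourier transforms and use $\lambda+|\xi|^{2\alpha}>0$. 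Your version is shorter but needs two justifications the paper's avoids: (i) dividing a tempered distribution by the non-smooth symbol $\lambda+|\xi|^{2\alpha}$ should be replaced by convolving the equation with the $L^{1}$ kernel $\mathcal{B}^{\lambda}_{\alpha}$ already introduced in Lemma \ref{L3.9-}, and (ii) applying $\mathbb{P}$ only yields $((-\Delta)^{\alpha}+\lambda)\mathbb{P}u=0$, so to conclude $u\equiv0$ you must use that the solutions in question are divergence-free (so $\mathbb{P}u=u$), which is implicit in the paper's setting but worth stating. Neither point is a genuine gap; both are routine to patch.
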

\begin{proof}  We consider
\begin{equation*}
(-\Delta)^{\alpha}v_{i}(x)+\lambda v_{i}(x)+\nabla q_{i}=f_{i}(x) \ \ \ \,\,\text{in}\,\,\,\in\R^{n},
\end{equation*}
and
\begin{equation*}
(-\Delta)^{\alpha}v(x)+\lambda v(x)+\nabla q=g(x)\ \ \ \,\,\text{in}\,\,\,\in\R^{n}.
\end{equation*}
By the Corollary \ref{RC}, the above two equations have  solution $v_{i}$ and $v$ respectively, such that
\begin{equation}\label{R2}\left.\begin{aligned}
&\|(-\Delta)^{\alpha}v_{i}\|_{L^{p}(\R^{n})}+\lambda^{\frac{1}{2}}\|(-\Delta)^{\frac{\alpha}{2}}v_{i}\|_{L^{p}(\R^{n})}+\lambda\|v_i\|_{L^{p}(\R^{n})}\leq C  \|f_{i}\|_{L^{p}(\R^{n})},\\
\smallskip
&\|(-\Delta)^{\alpha}v\|_{L^{p}(\R^{n})}+\lambda^{\frac{1}{2}}\|(-\Delta)^{\frac{\alpha}{2}}v\|_{L^{p}(\R^{n})}+
\lambda\|v\|_{L^{p}(\R^{n})}\leq C\|g\|_{L^{p}(\R^{n})}.\end{aligned}\right.
\end{equation}
Letting $u\triangleq  \partial_{i}v_{i}+v$, we easily obtain \eqref{R1.1} by the interpolation theory.

To prove $u$ is a unique solution to \eqref{R1}, we only prove that for any  $w\in L^{p}(\R^{n})$ solves \eqref{R1} with $f_{i}=0,\, g=0$ in sense of distribution, then $w\equiv 0$. To do this, we introduce
$$
\psi(y)=\int_{\R^{n}}\varphi(y+x)w(x)\,{\rm d}x,\ \  \ \forall\,\varphi\in C_{0}^{\infty}(\R^{n}).
$$
Since $\varphi\in C_{0}^{\infty}(\R^{n})$ and $w\in L^{p}(\R^{n})$, the function $\psi(y)$ is infinitely smooth and tends to zero as $y\to+\infty$. By a simple calculation, we can derive
\begin{equation}\label{R4}
(-\Delta)^{\alpha}\psi(y)+\lambda \psi(y)=\int_{\R^{n}}w(x)\Big((-\Delta)^{\alpha}\varphi(y+x)+\lambda \varphi(y+x)\Big)\, {\rm d}x=0.
\end{equation}
In fact, by the Helmholtz decomposition, we see that for $\forall\ \varphi\in C_{0}^{\infty}(\R^{n})$
$$
\varphi=\varphi_{1}+\nabla \varphi_{2}
$$
with ${\rm div}\ \varphi_{1}=0$. And then
\begin{align}\label{R4}
\begin{split}
(-\Delta)^{\alpha}\psi(y)+\lambda \psi(y)&=\int_{\R^{n}}w(x)\Big((-\Delta)^{\alpha}\varphi(y+x)+\lambda \varphi(y+x)\Big)\, {\rm d}x\\&
=\int_{\R^{n}}\Big((-\Delta)^{\alpha}w(x)+\lambda w(x)\Big)\Big(\varphi_{1}(y+x)+\nabla \varphi_{2}(y+x)\Big)\, {\rm d}x\\&
=\int_{\R^{n}}\Big((-\Delta)^{\alpha}w(x)+\lambda w(x)\Big)\varphi_{1}(y+x)\, {\rm d}x=0.
\end{split}
\end{align}

Now we claim $\psi(y)\equiv0$. Indeed, we suppose by contradiction that $\sup_{y\in\R^{n}}\psi(y)>0$. Since
 $\psi(y)\to0 $ as $ |y|\to+\infty$,
we can find $y_{0}\in\R^{n}$ such that $u(y_{0})=\sup_{y\in\R^{n}}\psi(y)>0$.

We see that
$$
(-\Delta)^{\alpha}\psi(y_{0})+\lambda \psi(y_{0})=C_{n,\alpha}{\rm p.v.}\int_{\R^{n}}\frac{\psi(y_{0})-\psi(y)}{|y^{0}-y|^{n+\alpha}}\,{\rm d}y+\lambda \psi(y_{0})>0
$$
which contradicts  \eqref{R4}. Therefore we must have   $\sup_{y\in\R^{n}}\psi(y)\le0$. Similarly, we also infer $\inf_{y\in\R^{n}}\psi(y)\geq0$. Thus we have $\psi(y)\equiv0$.
 The arbitrariness of $\varphi$ with $\psi(0)=0$ leads to the conclusion that $w=0$. The lemma is thus proved.
\end{proof}
\begin{cor}\label{C3.11}
Let $f_{i}\ (i=1,2,...,n),\ g\in H_{p}^{m}(\R^{n})$, then \eqref{R1} has a  solution $u\in H_{p}^{2\alpha-1+m}(\R^{n})$, which is a unique solution in $L^{p}(\R^{n})$.
\end{cor}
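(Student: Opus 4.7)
The strategy is to bootstrap Lemma \ref{L3.9} by applying the Bessel potential $(I-\Delta)^{m/2}$ to both sides of equation \eqref{R1}, thereby reducing the higher regularity problem to the $m=0$ case already handled. Since $(I-\Delta)^{m/2}$ is a Fourier multiplier, it commutes with $(-\Delta)^{\alpha}$, $\nabla$, and each $\partial_i$, and it maps $H_p^m(\R^n)$ isometrically onto $L^p(\R^n)$.

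First, I would set $\tilde{f}_i\triangleq (I-\Delta)^{m/2}f_i$ and $\tilde{g}\triangleq (I-\Delta)^{m/2}g$, which belong to $L^p(\R^n)$ by hypothesis. By Lemma \ref{L3.9}, there exist $\tilde{u}\in H_p^{2\alpha-1}(\R^n)$ and $\tilde{q}$ solving
\begin{equation*}
(-\Delta)^{\alpha}\tilde{u}+\lambda \tilde{u}+\nabla\tilde{q}=\partial_i\tilde{f}_i+\tilde{g}\qquad \text{in}\ \R^n,
\end{equation*}
together with the bound
\begin{equation*}
\|(-\Delta)^{\frac{2\alpha-1}{2}}\tilde{u}\|_{L^p}+\lambda^{\frac{2\alpha-1}{2\alpha}}\|\tilde{u}\|_{L^p}\leq C\sum_{i}\|\tilde{f}_i\|_{L^p}+\lambda^{-\frac{1}{2\alpha}}\|\tilde{g}\|_{L^p}.
\end{equation*}

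Second, I would define $u\triangleq (I-\Delta)^{-m/2}\tilde{u}$ and $q\triangleq (I-\Delta)^{-m/2}\tilde{q}$. Since $(I-\Delta)^{-m/2}$ commutes with all operators appearing in the equation, the pair $(u,q)$ solves \eqref{R1}, and the mapping property of the Bessel potential gives $u\in H_p^{2\alpha-1+m}(\R^n)$ with the natural quantitative bound derived by applying $(I-\Delta)^{m/2}$ to the estimate above.

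For uniqueness in $L^p(\R^n)$, if $w_1,w_2\in L^p$ are two solutions of \eqref{R1} with the same data, then $w\triangleq w_1-w_2\in L^p$ is a distributional solution of the homogeneous equation with $f_i=0$ and $g=0$, and the uniqueness assertion at the end of Lemma \ref{L3.9} forces $w\equiv 0$. The only point requiring mild care is that the commutation of $(I-\Delta)^{\pm m/2}$ with $(-\Delta)^\alpha$, $\nabla$, and $\partial_i$ be justified at the distributional level; this is immediate because all these operators are Fourier multipliers acting on tempered distributions, so no genuine obstacle arises and the proof reduces to bookkeeping.
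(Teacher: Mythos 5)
Your proposal is correct and follows the route the paper implicitly intends: the corollary is stated as an immediate consequence of Lemma \ref{L3.9}, and conjugating the equation by the Fourier multiplier $(I-\Delta)^{m/2}$ (which commutes with $(-\Delta)^{\alpha}$, $\lambda I$, $\nabla$ and $\partial_i$) reduces the $H_p^m$ data case to the $L^p$ case, while uniqueness in $L^p$ is exactly the uniqueness assertion of Lemma \ref{L3.9}. No gap.
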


\begin{defn}
Let $0<\alpha\leq1$, we say that $u\in D'(\R^{n})$ satisfies $(-\Delta)^{\alpha}u+u=0$ in an open set $\Omega$ if for every $\phi\in C_{0}^{\infty}(\Omega)$ such that
\begin{equation}\label{a-harmonic}
\big\langle u, (-\Delta)^{\alpha}\phi+\phi\big\rangle=0.
\end{equation}
\end{defn}
\begin{lem}[\cite{La,Silv}]\label{Regu of harmo}
Let $u\in L_{\rm loc}^{1}(\R^{n})$,  and it satisfies \eqref{a-harmonic} and
  $$\int_{\R^{n}}\frac{\big|u(x)\big|}{1+|x|^{n+2\alpha}}\,\mathrm{d}x<+\infty.$$ Then $u$ is smooth in $\R^n$.
\end{lem}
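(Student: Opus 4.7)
The plan is to establish interior smoothness of $u$ at every point $x_0$ where \eqref{a-harmonic} applies, by localising the equation and then invoking the elliptic regularity of $I+(-\Delta)^\alpha$ already developed in Corollary \ref{C3.11}. Fix such an $x_0$ and $R>0$ so that $\B_{4R}(x_0)$ lies in the open set in which \eqref{a-harmonic} holds. Choose a cutoff $\eta\in C_0^\infty(\B_{2R}(x_0))$ with $\eta\equiv 1$ on $\B_{R}(x_0)$, and decompose $u=\eta u+(1-\eta)u$. For any test function $\phi\in C_0^\infty(\B_{R/2}(x_0))$ one has $\phi(1-\eta)\equiv 0$, so unfolding the principal-value definition of $(-\Delta)^\alpha\phi$ I would compute
\begin{equation*}
\big\langle (1-\eta)u,(-\Delta)^\alpha\phi\big\rangle=\int_{\R^n}\phi(z)g(z)\,\mathrm{d}z,\qquad g(z):=-c_{n,\alpha}\int_{|y-x_0|>R}\frac{(1-\eta)(y)\,u(y)}{|y-z|^{n+2\alpha}}\,\mathrm{d}y.
\end{equation*}
The weighted integrability hypothesis permits differentiation under the integral sign for $z\in\B_{R/2}(x_0)$, so $g\in C^\infty(\B_{R/2}(x_0))$. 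Combined with $\langle u,(-\Delta)^\alpha\phi+\phi\rangle=0$ this gives
\begin{equation*}
(I+(-\Delta)^\alpha)(\eta u)=-g\qquad\text{distributionally on }\B_{R/2}(x_0).
\end{equation*}

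Next, I would pick $\chi\in C_0^\infty(\B_{R/2}(x_0))$ with $\chi\equiv 1$ on $\B_{R/4}(x_0)$ and set $v:=\mathcal{B}^1_\alpha\ast(-\chi g)$, where $\mathcal{B}^1_\alpha=(I+(-\Delta)^\alpha)^{-1}\delta_0$ is the Bessel potential introduced in Lemma \ref{L3.9-}. Since $-\chi g\in C_0^\infty(\R^n)$, Corollary \ref{C3.11} applied with arbitrarily large regularity index forces $v\in C^\infty(\R^n)$. The remainder $w:=\eta u-v$ is then a compactly supported tempered distribution satisfying $(I+(-\Delta)^\alpha)w=0$ on $\B_{R/4}(x_0)$.

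It remains to show $w\in C^\infty(\B_{R/8}(x_0))$. My approach is a bootstrap: starting from $w\in L^1(\R^n)\cap\mathcal{E}'(\R^n)$, apply Corollary \ref{C3.11} locally (after further multiplication by cutoffs and control of the resulting commutator exactly as in the first step) to gain $2\alpha$ derivatives in a slightly smaller ball, iterate to reach arbitrary Sobolev regularity, and finish with Sobolev embedding. Equivalently, since $(I+(-\Delta)^\alpha)w$ is supported away from $\B_{R/4}(x_0)$, one may represent $w$ on $\B_{R/8}(x_0)$ as the convolution of $\mathcal{B}^1_\alpha$ with a distribution supported outside this ball and differentiate under the integral sign, exploiting the $C^\infty$ decay of $\mathcal{B}^1_\alpha$ away from $0$; this is essentially the content of \cite{La,Silv}. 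Combined with the smoothness of $v$, this yields $u=\eta u=v+w\in C^\infty(\B_{R/8}(x_0))$, and the arbitrariness of $x_0$ completes the proof.

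The main obstacle is the last step: establishing interior smoothness of the fractional-harmonic remainder $w$. The nonlocality of $(-\Delta)^\alpha$ makes each round of the bootstrap reintroduce a far-field contribution through the commutator $[(-\Delta)^\alpha,\eta_k]w$, which must be absorbed into the smooth right-hand side by the weighted integrability. The iteration therefore has to be arranged on a shrinking sequence of balls so that each application of Corollary \ref{C3.11} gains $2\alpha$ derivatives without the radius collapsing; for the purposes of the present paper we simply quote the classical result of \cite{La,Silv}.
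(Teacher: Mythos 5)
The first thing to say is that the paper contains no proof of this lemma: it is imported as a black box from \cite{La,Silv} (Landkof; Silvestre), so there is no in‑paper argument to compare yours against. Your localization is the standard reduction and is essentially sound: writing $u=\eta u+(1-\eta)u$, identifying the far‑field contribution $\langle(1-\eta)u,(-\Delta)^{\alpha}\phi\rangle=\int\phi g$ with $g$ smooth on $\B_{R/2}(x_0)$ thanks to the hypothesis $\int|u|(1+|x|)^{-n-2\alpha}\,\mathrm{d}x<\infty$, and subtracting $v=\mathcal{B}^{1}_{\alpha}\ast(-\chi g)$ are all correct steps. The problem is what remains. After the reduction you must show interior smoothness of a distribution $w$ for which $(I+(-\Delta)^{\alpha})w$ vanishes near $x_0$ — and that statement \emph{is} the lemma (the peeling‑off of a smooth inhomogeneity is the easy part). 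Neither of your two suggested ways of finishing closes this. The bootstrap through Corollary \ref{C3.11} cannot be run as written: after multiplying by a cutoff $\eta_k$, the quantity $(I+(-\Delta)^{\alpha})(\eta_k w)$ is a smooth function only on the smaller ball and an uncontrolled distribution outside it, so the \emph{global} $L^{p}$ resolvent estimate of Corollary \ref{C3.11} (which moreover concerns the Stokes‑type system \eqref{R1} with a pressure gradient, not the scalar equation) does not apply, and the commutator is not "absorbed" by any estimate you have established. The convolution route $w=\mathcal{B}^{1}_{\alpha}\ast h$ with $h=(I+(-\Delta)^{\alpha})w$ supported outside $\B_{R/4}(x_0)$ requires justifying both the inversion of $I+(-\Delta)^{\alpha}$ on the class of distributions containing $h$ and the pairing of $h$ against $\mathcal{B}^{1}_{\alpha}(x-\cdot)$, which by Lemma \ref{L2.1} decays only polynomially, like $|y|^{-n-2\alpha}$, for $\alpha<1$. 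Since you ultimately write "we simply quote the classical result of \cite{La,Silv}", the proposal amounts to a correct preliminary reduction followed by a citation of the result being proved — which is no more (and no less) than what the paper itself does.

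Three smaller inaccuracies are worth flagging. First, $w=\eta u-v$ is not a compactly supported distribution, because $v=\mathcal{B}^{1}_{\alpha}\ast(-\chi g)$ is not compactly supported; it is, however, in $L^{1}\cap L^{\infty}$ with bounded derivatives, which is all you need. Second, the smoothness of $v$ should be obtained from the Bessel‑potential computation inside the proof of Lemma \ref{L3.9-} (or directly from $\widehat{v}=-\widehat{\chi g}/(1+|\xi|^{2\alpha})$ having all polynomial moments integrable when $\chi g\in C_0^{\infty}$), rather than from Corollary \ref{C3.11}, which is stated for the system with $\nabla q$. Third, the conclusion can only be smoothness on the open set where \eqref{a-harmonic} holds, not on all of $\R^{n}$; this is consistent with how the lemma is invoked in the proof of Theorem \ref{P3.12}, but your statement of the goal should reflect it.
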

\begin{thm}\label{P3.12}
Let $\frac{5}{6}< \alpha\leq 1$, then the distributional solution $V(x)$ of system \eqref{E3.2} established in Theorem~\ref{P3.7} is smooth.
\end{thm}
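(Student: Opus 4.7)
The plan is to bootstrap the regularity of $V$ using Corollary~\ref{C3.11} after a suitable localization. The starting point is $V\in H^\alpha_\sigma(\R^3)$ from Theorem~\ref{P3.7}, which by Sobolev embedding gives $V\in L^{p_0}(\R^3)$ with $p_0=6/(3-2\alpha)$. By Proposition~\ref{prop-U1}, $U_0$ is smooth on $\R^3$ with $\langle x\rangle^{2\alpha-1+|\beta|}|D^\beta U_0|$ bounded for every $\beta$, so every source term built purely from $U_0$ is smooth and globally integrable. Since $\alpha>5/6$ we have $2\alpha-1>2/3$, and my iteration will gain $2\alpha-1$ fractional derivatives per step; finitely many iterations will therefore push $V$ past every Sobolev threshold into $C^\infty$.

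First I recast \eqref{E3.2} into the divergence form that fits Corollary~\ref{C3.11}. Using $\mathrm{div}\,V=\mathrm{div}\,U_0=0$ to write the convective terms as divergences, together with the distributional identity $x\cdot\nabla V=\partial_j(x_jV)-3V$, the equation becomes
\[
(-\Delta)^\alpha V+V+\nabla P=g+\partial_j f_j,
\]
with $g=c_\alpha V-U_0\cdot\nabla U_0$ and $f_j=\tfrac{x_j}{2\alpha}V-(U_0^j+V_j)V-V_jU_0$ for a suitable constant $c_\alpha$. The factor $x_j$ in $f_j$ obstructs a direct global application of Corollary~\ref{C3.11}, so I localize: for arbitrary $\eta\in C_c^\infty(\R^3)$ I derive the equation for $\eta V$, which picks up a commutator $[(-\Delta)^\alpha,\eta]V$ and a pressure piece $P\nabla\eta$. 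On $\mathrm{supp}\,\eta$ the multiplier $x_j$ is bounded, so the localized $f_j$'s inherit the Lebesgue--Sobolev regularity of $V$.

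The bootstrap step itself is then routine: assuming $V\in H^{s_k}_{p_k,\mathrm{loc}}(\R^3)$, the fractional Leibniz estimate~\eqref{Leib} together with the smoothness of $U_0$ places each localized $f_j$ and $g$ in $H^{s_k}_{p_k}(\R^3)$, so Corollary~\ref{C3.11} upgrades $\eta V$ to $H^{s_k+2\alpha-1}_{p_k}(\R^3)$. Starting from $s_0=\alpha$ and adjusting $p_k$ via Sobolev embedding as the regularity climbs, after finitely many iterations $V$ is locally $H^s_p$ with $s>3/p$, hence continuous; repeating the argument yields $V\in C^\infty_{\mathrm{loc}}(\R^3)$.

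The main obstacle is controlling the non-local commutator $[(-\Delta)^\alpha,\eta]V$ at each step: I need it to be genuinely of lower order than $(-\Delta)^\alpha(\eta V)$ so the gain is not destroyed. Writing
\[
[(-\Delta)^\alpha,\eta]V(x)=c_{3,\alpha}\int_{\R^3}\frac{(\eta(x)-\eta(y))V(y)}{|x-y|^{3+2\alpha}}\,\mathrm{d}y,
\]
I split the integral into a near-diagonal part, where $\eta(x)-\eta(y)=O(|x-y|)$ reduces the kernel singularity to $|x-y|^{2+2\alpha}$ and produces a gain of one full derivative via standard fractional commutator estimates, and a tail part controlled by $V\in L^{p_0}$ against the integrable kernel $|x-y|^{-3-2\alpha}$. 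This shows $[(-\Delta)^\alpha,\eta]$ has order at most $2\alpha-1$, so the commutator is absorbed as a lower-order source in the equation for $\eta V$ and the bootstrap closes.
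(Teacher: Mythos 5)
Your overall bootstrap skeleton --- rewrite \eqref{E3.2} in resolvent form $(-\Delta)^{\alpha}V+cV+\nabla P=g+\partial_{j}f_{j}$ with $x\cdot\nabla V$ absorbed into a divergence, feed the right-hand side through the fractional Leibniz estimate \eqref{Leib} and Corollary~\ref{C3.11}, and gain $2\alpha-1$ derivatives per pass --- is exactly the paper's. The difference is where the cutoff goes, and that is where your argument has a genuine gap. You multiply the \emph{solution} by $\eta$ and work with $\eta V$. But $\eta V$ is not divergence-free ($\mathrm{div}(\eta V)=\nabla\eta\cdot V$), whereas the whole existence--uniqueness package of Lemma~\ref{L3.9} and Corollary~\ref{C3.11} is built for the solenoidal fractional Stokes resolvent: the solution there is constructed through the Leray-projected symbol, and the uniqueness proof discards the gradient part $\nabla\varphi_{2}$ of the Helmholtz-decomposed test function precisely by using $\mathrm{div}\,w=0$. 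Without that uniqueness you cannot identify $\eta V$ with the solution Corollary~\ref{C3.11} produces for your localized equation, so the regularity gain never transfers back to $V$. In addition, your localized equation carries the source $P\nabla\eta$ (from trading $\eta\nabla P$ for $\nabla(\eta P)$), so you would have to bootstrap local $H^{s}_{p}$ regularity of the pressure in parallel; at this stage only the low integrability of $P$ coming from the construction is available, and you give no mechanism for upgrading it. Your commutator discussion is morally correct ($[(-\Delta)^{\alpha},\eta]$ has order $2\alpha-1\le 1$, and the tail is controlled by $V\in L^{p_{0}}$ against an integrable kernel), but it is the least serious of these three issues.

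The paper sidesteps all of this by localizing the \emph{data} instead of the solution: it solves the global divergence-free resolvent problem with right-hand side $\mathrm{div}\big(\varphi_{1}G(U_{0},V)\big)$, obtaining $V_{1}\in H^{3\alpha-1}_{3/(3-\alpha)}(\R^{3})$ from Corollary~\ref{C3.11}, and then observes that the remainder $w_{1}=V-V_{1}$ solves $(-\Delta)^{\alpha}w_{1}+\tfrac{2-\alpha}{\alpha}w_{1}+\nabla\tilde{P}_{1}=\tilde{f}$ with $\tilde{f}\equiv0$ on an inner ball; there $\tilde{P}_{1}$ is harmonic, hence smooth, and after subtracting a smooth correction the rest is handled by the interior regularity of solutions of $(-\Delta)^{\alpha}v+v=0$ (Lemma~\ref{Regu of harmo}). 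No cutoff commutator, no loss of solenoidality, and no local pressure regularity is ever needed. If you wish to keep the ``localize the solution'' route, you must either prove a version of Lemma~\ref{L3.9} without the divergence constraint (eliminating the pressure first) or restore $\mathrm{div}=0$ with a Bogovskii-type correction, and you must run a parallel bootstrap for $P$ via $-\Delta P=\mathrm{div}\,\mathrm{div}(\cdots)$; as written, the proof does not close.
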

\begin{proof}
 We now rewrite system \eqref{E3.2} as
$$
(-\Delta)^{\alpha}V+\frac{2-\alpha}{\alpha}V+\nabla P={\rm div} \,\big(G(U_{0},V)\big),\quad\text{div}\,V=0,
$$
where
$$
G(U_{0},V)=\frac{1}{2\alpha}x\otimes V-U_{0}\otimes U_{0}-U_{0}\otimes V-V\otimes U_{0}-V\otimes V.
$$

{\bf Step 1}. Now we consider
\begin{equation}\label{Regulairty3.37}
(-\Delta)^{\alpha}V_{1}+\frac{2-\alpha}{\alpha}V_{1}+\nabla P_{1} ={\rm div} (G_{1}(U_{0},V)),\ \ \mbox{div}\ V_{1}=0,
\end{equation}
where
$$
G_{1}(U_{0},V)=\varphi_{1}\big(\frac{x}{2\alpha}\otimes V-U_{0}\otimes U_{0}-U_{0}\otimes V-V\otimes U_{0}-V\otimes V\big),
$$
and $\varphi_{1}\in C_{0}^{\infty}(\B_{2R}), \,\varphi_{1}\equiv 1$ in $\B_{(2-\frac{1}{2})R}$ with some fixed $R>0$.

 Due to $V\in H^{\alpha}(\R^{3})$, we derive by \eqref{Leib} that
$V\otimes V\in H^{\alpha}_{\frac{3}{3-\alpha}}(\R^{3})$, and then we have
$$
G_{1}(U_{0},V)\in H^{\alpha}_{\frac{3}{3-\alpha}}(\R^{3}).
$$ This inclusion  together with  Corollary \ref{C3.11} implies \eqref{Regulairty3.37} admits a unique solution $V_{1}\in  H^{3\alpha-1}_{\frac{3}{3-\alpha}}(\R^{3})$.

 Letting $w_{1}=V-V_{1}$, we obtain
$$
(-\Delta)^{\alpha}w_{1}+\frac{2-\alpha}{\alpha}w_{1}+\nabla \tilde{P}_{1}=\tilde{f}(x)\quad \ \ \mbox{in}\ \ \R^{3},
$$
where
$$
\tilde{f}(x)=0\ \ \  \mbox{for}\ \  x\in \B_{(2-\frac{1}{2})R}\quad\text{and}\quad  \tilde{P}_{1}=P-P_{1}.
$$
Since $\tilde{P}_1$ is harmonic in $\B_{(2-\frac{1}{2})R}$,  we have $\tilde{P}_1\in C^{\infty}(\B_{(2-\frac{1}{2})R})$.  Now let $\tilde{w}_{1}$
be the solution of
$$
(-\Delta)^{\alpha}\tilde{w}_{1}+\tilde{w}_{1}=\nabla \tilde{P}_1\eta_{1}(x)\in C_{0}^{\infty}(\R^{3}),
$$
where $\eta_{1}(x)\in C_{0}^{\infty}(\B_{(2-\frac{3}{4})R})$ and $\eta_{1}(x)\equiv1$ for $x\in \B_{(2-\frac{7}{8})R}$. By the classical theory, we know that $\tilde{w}_{1}\in C^{\infty}(\R^{3})\cap L^{\infty}(\R^{3})$. Again letting $\tilde{v}_{1}=w_{1}-\tilde{w}_{1}$, we see that
$$
(-\Delta)^{\alpha}\tilde{v}_{1}+\tilde{v}_{1}=0 \, \ \ \ \text{in}\ \ \B_{(2-\frac{7}{8})R}.
$$
In terms of Lemma \ref{Regu of harmo}, we derive $\tilde{v}_{1}\in C^{\infty}(\B_{(2-\frac{7}{8})R})$. Therefore we have $w_{1}\in C^{\infty}(\B_{(2-\frac{15}{16})R})$.

Since $V=V_{1}+w_{1}$, we infer from the fact $\alpha>\frac{5}{6}$ and the Sobolev embedding theorem that
$$
V\in H^{3\alpha-1}_{\frac{3}{3-\alpha}}(\B_{(2-\frac{15}{16})R})\subset H^{\alpha}(\B_{(2-\frac{15}{16})R}).
$$

{\bf Step 2}. {\em Bootstrapping Arguments.}  Again, consider
\begin{equation}\label{Regulairty3.38}
(-\Delta)^{\alpha}V_{2}+\frac{2-\alpha}{\alpha}V_{2}+\nabla P_{2} ={\rm div} \,\big(G_{2}(U_{0},V)\big),\ \ \mbox{div}\ V_{2}=0
\end{equation}
where
$$
G_{2}(U_{0},V)=\varphi_{2}\big(\frac{x}{2\alpha}\otimes V-U_{0}\otimes U_{0}-U_{0}\otimes V-V\otimes U_{0}-V\otimes V\big),
$$
and
$$
\varphi_{2}\in C_{0}^{\infty}\big(\B_{(2-\frac{15}{16})R}\big),\ \  \varphi_{2}\equiv 1\ \  \mbox{in}\ \ \B_{(2-\frac{32}{33})R}.
$$
The fact
$$
V\in H^{3\alpha-1}_{\frac{3}{3-\alpha}}(\B_{(2-\frac{15}{16})R})\hookrightarrow
\left \{
\begin{array}{ll}
L^{\frac{3}{4-4\alpha}}(\B_{(2-\frac{15}{16})R})\ \ & \text{if}\ \alpha<1,\\\\
L^{q}(\B_{(2-\frac{15}{16})R})\quad\text{for each}\,\,\, q>1 \ \ & \text{if}\ \alpha=1,
\end{array}
\right.
$$
and
$$
\nabla V\in H^{3\alpha-2}_{\frac{3}{3-\alpha}}(\B_{(2-\frac{15}{16})R})\hookrightarrow L^{\frac{3}{5-4\alpha}}(\B_{(2-\frac{15}{16})R})
$$
allow us to derive
$$
{\rm div}\, \big(G_{2}(U_{0},V)\big)\in L^{p}(\R^{3})\ \ \ \  \mbox{with}\ \ p=\frac{3}{9-8\alpha}>1,
$$
where we have used the fact that $\alpha>\frac{5}{6}$.

 By Corollary \ref{RC}, \eqref{Regulairty3.38} has a unique solution $V_{2}\in H_{p}^{2\alpha}(\R^{3})$ with $p=\frac{3}{9-8\alpha}$. Performing the same argument as Step 1, we have
$$
V\in H_{p}^{2\alpha}(\B_{(2-\frac{2^{7}-1}{2^{7}})R})\subset H^{3\alpha-1}_{\frac{3}{3-\alpha}}(\B_{(2-\frac{2^{7}-1}{2^{7}})R}),.
$$
By the Sobolev  embedding theorem, we see
$$
V\in\left \{
\begin{array}{ll}
L^{\frac{3}{9-10\alpha}}(\B_{(2-\frac{2^{7}-1}{2^{7}})R}) & \ \ \mbox{if}\ \ \alpha<\frac{9}{10},\\\\
L^{q}(\B_{(2-\frac{2^{7}-1}{2^{7}})R})\quad\text{for each}\,\,\, q>1  & \ \ \mbox{if}\ \ \alpha=\frac{9}{10},\\\\
L^{\infty}(\B_{(2-\frac{2^{7}-1}{2^{7}})R}) & \ \ \mbox{if}\ \ \alpha>\frac{9}{10}.
\end{array}
\right.
$$
Now repeating our previous argument a finite number of times,  we  get  $V\in L^{\infty}(\B_{(2-\frac{2^{k_{0}}-1}{2^{k_{0}}})R})$ with some integer $k_{0}>0$.
Once $V\in L^{\infty}(\B_{(2-\frac{2^{k_{0}}-1}{2^{k_{0}}})R})$,  we derive that $V\cdot\nabla V\in H_{p}^{2\alpha-1}(\B_{(2-\frac{2^{k_{0}}-1}{2^{k_{0}}})R})$ by \eqref{Leib} and immediately obtain by using the argument of Step 1 that
$$
V\in H_{p}^{2(2\alpha-1)}\Big(\B_{(2-\frac{2^{k_{0}+3}-1}{2^{k_{0}+3}})R}\Big).
$$
Furthermore, repeating the process in Step 1, we  can show by induction that
$$
V\in H_{p}^{m(2\alpha-1)}\big(\B_{R}\big)\ \ \ \ \mbox{for all}\ \ m>0.
$$
This implies $V$ is smooth.
\end{proof}

\subsection{Decay estimate for $V(x)$ when $\alpha=1$}

In this subsection, we will prove  a few decay estimates of the  weak solution  to equations \eqref{E3.2}, which is the key point in the proof of the case $\alpha=1$. \begin{thm}\label{thm-1-I}
Assume that  $V\in H^{1}(\R^{3})$ is the weak solution of problem \eqref{E3.2} established in Theorem~\ref{P3.7}. Then $V$ is smooth and there exists a constant $C>0$ such that
\[\big\|\langle\cdot\rangle\nabla P(\cdot)\big\|_{\dot{B}^0_{\infty,\infty}(\R^3)}<+\infty,\]
$$
\big|V\big|(x)\leq C\big(1+|x|\big)^{-3 }\log(1+|x|)\qquad\text{for all}\,\,\, x\in\R^3,
$$
and
$$
\big|D V\big|(x)\leq C\big(1+|x|\big)^{-3}\qquad\text{for all}\,\,\, x\in\R^3.
$$
\end{thm}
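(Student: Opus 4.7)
Smoothness of $V$ on $\R^{3}$ is immediate from Theorem~\ref{P3.12}, since $\alpha=1\in(5/6,1]$, so we focus on the decay estimates. The plan is a three-stage bootstrap: first obtain the weighted bound $\langle\cdot\rangle V\in H^{1}(\R^{3})$, then upgrade to the pointwise bound $|V(x)|\le C\langle x\rangle^{-1}$, and finally feed this decay into the representation formula of Proposition~\ref{prop-E} to deduce the sharp $\langle x\rangle^{-3}\log\langle x\rangle$ decay. The pressure and gradient bounds emerge along the way.

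\textbf{Weighted estimate.} The core of the proof is $\int_{\R^{3}}|x|^{2}(|V|^{2}+|\nabla V|^{2})\,\mathrm{d}x<+\infty$. I would test the weak formulation \eqref{WEN} against $\varphi=\eta_{R}^{2}|x|^{2}V$, with $\eta_{R}$ a smooth cutoff equal to $1$ on $\B_{R}$ and supported in $\B_{2R}$; the cutoff is needed because $|x|^{2}V$ is not a priori admissible, and $R\to\infty$ is taken at the end. The viscous term yields $\int\eta_{R}^{2}|x|^{2}|\nabla V|^{2}\,\mathrm{d}x$ up to commutators controllable by \eqref{eq.I3-add}. The crucial coercivity comes from symmetrizing the drift: $\int(x\cdot\nabla V)\cdot|x|^{2}V\,\mathrm{d}x=-\tfrac{5}{2}\int|x|^{2}|V|^{2}\,\mathrm{d}x$, which combined with the zeroth-order term produces the positive contribution $\tfrac{3}{4}\int|x|^{2}|V|^{2}\,\mathrm{d}x$. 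The nonlinear pieces are handled via Proposition~\ref{prop-U1}(iii), giving $|U_{0}|\lesssim\langle x\rangle^{-1}$ and $|\nabla U_{0}|\lesssim\langle x\rangle^{-2}$, together with the baseline $H^{1}$ bound.

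\textbf{Main obstacle.} The chief difficulty is the pressure term: since $\mathrm{div}(|x|^{2}V)=2x\cdot V\neq 0$, the pressure does not drop out and one is forced to bound $2\int P(x\cdot V)\eta_{R}^{2}\,\mathrm{d}x$. The plan is to exploit the elliptic equation $-\Delta P=\mathrm{div}\,\mathrm{div}\bigl(U_{0}\otimes U_{0}+U_{0}\otimes V+V\otimes U_{0}+V\otimes V\bigr)-\tfrac{1}{2}\mathrm{div}(x\cdot\nabla V)$ and apply Calder\'on-Zygmund estimates with the decay of $U_{0}$ to derive a preliminary decay of $P$, after which the pressure contribution is either absorbed into the coercive terms or controlled by a priori quantities. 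Once the weighted $H^{1}$ bound is closed, Sobolev embedding yields $\langle\cdot\rangle V\in L^{6}(\R^{3})$; combined with interior elliptic regularity applied to \eqref{E3.2} on balls $\B_{|x|/2}(x)$ after rescaling, this upgrades to the pointwise bound $|V(x)|\le C\langle x\rangle^{-1}$.

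\textbf{Sharp decay and pressure.} With $|V|\lesssim\langle x\rangle^{-1}$ in hand, rewrite \eqref{E3.2} for $\alpha=1$ as $-\Delta V-\tfrac{1}{2}(x\cdot\nabla V+V)=\mathbb{P}F(V)$, where $F(V)=-U_{0}\cdot\nabla U_{0}-(U_{0}+V)\cdot\nabla V-V\cdot\nabla U_{0}$, and apply Proposition~\ref{prop-E}. The decay of $U_{0}$, $\nabla U_{0}$, and $V$ gives $|\mathbb{P}F(V)|\lesssim\langle x\rangle^{-3}$, so
\[V(x)=\int_{0}^{1}\!\!\int_{\R^{3}}\Phi(y,s)\,(1-s)^{-3/2}(\mathbb{P}F(V))\Bigl(\tfrac{x-y}{\sqrt{1-s}}\Bigr)\,\mathrm{d}y\,\mathrm{d}s.\]
Splitting the integral over $\{|y|\le|x|/2\}$ and $\{|y|\ge|x|/2\}$ and using the Gaussian decay of $\Phi$ produces $|V(x)|\le C\langle x\rangle^{-3}\log\langle x\rangle$, the logarithm arising from the matching region between the two pieces. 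Differentiating the representation (derivatives transfer onto $\Phi$ with an integrable factor $s^{-1/2}$) gives $|DV(x)|\le C\langle x\rangle^{-3}$. Finally, the weighted Besov estimate $\|\langle\cdot\rangle\nabla P\|_{\dot{B}^{0}_{\infty,\infty}(\R^{3})}<+\infty$ follows by combining the Calder\'on-Zygmund representation $\nabla P=\nabla(-\Delta)^{-1}\mathrm{div}\bigl(F(V)-\tfrac{1}{2}x\cdot\nabla V\bigr)$, applied dyadically in frequency, with the established decay of $V$ and $U_{0}$.
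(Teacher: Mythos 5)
Your first stage is essentially the paper's Proposition \ref{prop-xv}: testing \eqref{WEN} against $|x|^{2}V$ (the paper uses the regularized weight $h_\varepsilon^2(x)=|x|^2(1+\varepsilon|x|^2)^{-3/2}$ rather than a cutoff, but that is cosmetic), the same symmetrization of the drift producing the coercive term $\tfrac34\int|x|^2|V|^2$, and the pressure term absorbed via $P\in L^2(\R^3)$ and Young's inequality — you do not need any preliminary decay of $P$ here, since $|\mathrm{div}(|x|^2V)|\le 2|x||V|$ is controlled by the quantity being estimated. Up to this point the proposal is sound and matches the paper.

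The gaps appear in the two subsequent stages. First, the jump from $\langle\cdot\rangle V\in H^1$ to the pointwise bound $|V(x)|\lesssim\langle x\rangle^{-1}$ via interior elliptic estimates on balls around $x$ does not close as stated: the drift term $\tfrac12 x\cdot\nabla V$ has a coefficient of size $|x|$ on $\B_{1}(x)$, so the local estimate forces you to treat $\tfrac12\mathrm{div}(x\otimes V)$ as forcing and control $\||\cdot|V\|_{L^{p}(\B_2(x))}$, which the weighted $H^1$ bound makes $o(1)$ but not $O(\langle x\rangle^{-1})$. This is exactly why the paper inserts the weighted $H^2$ machinery (Lemma \ref{RNS}, Propositions \ref{coro-h2} and \ref{prop-w-2}, via difference quotients) to get $|x|V\in H^2\hookrightarrow \dot B^0_{\infty,1}$ and hence the pointwise bound. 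Second, your final stage applies Proposition \ref{prop-E} to $f=\mathbb{P}F(V)$ and asserts $|\mathbb{P}F(V)|\lesssim\langle x\rangle^{-3}$, but (i) $F(V)$ contains $V\cdot\nabla V$ and $U_0\cdot\nabla V$, so you need $|\nabla V|\lesssim\langle x\rangle^{-2}$, which you never establish — the paper spends Propositions \ref{prop-nv} and \ref{prop-2p} obtaining $|x|^2(|V|+|\nabla V|)\lesssim1$ and $|x|^2|\nabla P|\lesssim1$ before the last step; and (ii) even granting $|F(V)|\lesssim\langle x\rangle^{-3}$, the Leray projection is nonlocal with a kernel of order $|x|^{-3}$, and convolving two $|x|^{-3}$ tails in $\R^3$ produces $|x|^{-3}\log|x|$: the logarithm in the theorem comes from this pressure/projection term, not from the matching region of the Gaussian convolution (for the local part, $\int_0^1(\tau+|x|^2)^{-3/2}\,\mathrm{d}\tau\lesssim|x|^{-3}$ with no log). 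This is why the paper tracks $\nabla P$ separately, proving $\||\cdot|^3\nabla P\|_{\dot B^0_{\infty,\infty}}<+\infty$ as the endpoint substitute for $\langle x\rangle^{-3}$ decay, and then invokes the corresponding computation of Jia--\v{S}ver\'ak for the final $\langle x\rangle^{-3}\log\langle x\rangle$ bound on $V$ and $\langle x\rangle^{-3}$ bound on $DV$. A minor further slip: $x\cdot\nabla V$ is divergence free, so it contributes nothing to the pressure equation and should not appear in your representation of $\nabla P$.
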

By the well-known theorem of De Rham (See for example [Proposition 1.1, \cite{Temam}]), there exists a pressure $P\in L^2(\R^3)$  governed by
\begin{equation*}\label{eq.PE}
P=\sum_{i,j=1}^3\frac{1}{4\pi}\partial^2_{x_i,x_j}\int_{\mathbb{R}^3}\frac{1}{|x-y|}\big(V^iV^j+U^i_0 V^j+V^iU^j_0+U^i_0 U^j_0\big)\,\mathrm{d}y
\end{equation*}
such that for all vector fields $\varphi\in H^1(\mathbb{R}^3)$ satisfying $\big\||\cdot |\varphi(\cdot)\big\|_{L^2(\mathbb{R}^3)}<+\infty,$
the couple $(V,P)$ fulfills
\begin{equation}\label{eq.weak}
\begin{split}
&\int_{\mathbb{R}^3}\nabla V:\nabla\varphi\,\mathrm{d}x-\frac12\int_{\mathbb{R}^3}x\cdot \nabla V\cdot\varphi\,\mathrm{d}x-\frac12\int_{\mathbb{R}^3} V\cdot\varphi\,\mathrm{d}x-\int_{\mathbb{R}^3}P\, \mathrm{div}\,\varphi\,\mathrm{d}x\\
=&\int_{\mathbb{R}^3}V\cdot \nabla \varphi\cdot V\,\mathrm{d}x-\int_{\mathbb{R}^3}U_0\cdot \nabla  V\cdot \varphi\,\mathrm{d}x-\int_{\mathbb{R}^3}V\cdot \nabla  U_0\cdot \varphi\,\mathrm{d}x-\int_{\mathbb{R}^3}U_0\cdot \nabla  U_0\cdot \varphi\,\mathrm{d}x.
\end{split}
\end{equation}
From Theorem \ref{P3.12}, it follows that $V\in C^{\infty}(\R^3)$.

Since $P$  satisfies
 \begin{equation}\label{eq.P}
-\Delta P=\mathrm{div}\,\big(V\cdot\nabla V+U_{0}\cdot\nabla V+V\cdot\nabla U_{0}+U_{0}\cdot\nabla U_{0}\big),
\end{equation}
  we have by the elliptic theory that  $P\in C^{\infty}(\R^3)$.

From Proposition  \ref{prop-U1}, we can conclude that $U_0\in L^p(\R^3)$ for all $p\in(3,+\infty)$, $\nabla U_0\in L^2(\R^3)$ and
 \begin{equation*}\
\sup_{x\in\mathbb{R}^3}\big|\langle x\rangle \nabla U_0\big|(x)+\sup_{x\in\mathbb{R}^3}\big|\langle x\rangle U_0\big|(x)<+\infty.
\end{equation*}
These estimates together with the properties of singular operator and the imbedding theorem impliy that for each $\frac32<p\leq3,$
\begin{equation*}\label{eq.Pe1}
\begin{split}
\|P\|_{L^p(\mathbb{R}^3)}\leq&C\|V\|^2_{L^{2p}(\mathbb{R}^3)}+ C\|V\|_{L^{2p}(\mathbb{R}^3)}\|U_{0}\|_{L^{2p}(\mathbb{R}^3)}+C\|U_{0}\|^2_{L^{2p}(\mathbb{R}^3)}\\
\leq& C\|V\|^2_{H^1(\mathbb{R}^3)} +C\|U_{0}\|^2_{L^{2p}(\mathbb{R}^3)}<+\infty.
\end{split}
\end{equation*}
To accomplish the decay estimate, we first prove the  $H^1(\R^3)$-estimate of $|x|V$, which is the key estimate in our proof.
\begin{prop}\label{prop-xv}
	Let the couple $(V,P)\in H^1(\R^3)\times L^2(\R^3)$  satisfying  \eqref{eq.weak}.
	Then we have
	\[W(x)\triangleq|x|V(x)\in H^1(\mathbb{R}^3).\]
\end{prop}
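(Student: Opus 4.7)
\medskip
\noindent\textbf{Proof plan.} The approach is to test the weak identity~\eqref{eq.weak} with $\varphi = \chi_R\langle x\rangle^2 V$, where $\chi_R(x)=\chi(|x|/R)$ is a smooth cutoff equal to $1$ on $B_R$ and supported in $B_{2R}$, and to pass to the limit $R\to\infty$. Admissibility ($\varphi\in H^1$ with $|\cdot|\varphi\in L^2$) is immediate from the compact support of $\chi_R$ and $V\in H^1$. Using $V\in C^\infty$ by Theorem~\ref{P3.12} and $\mathrm{div}\,V=0$, integration by parts assembles the diffusion, drift, and zeroth-order contributions on the left-hand side into
\[
\int_{\R^3}\chi_R\langle x\rangle^2|\nabla V|^2\,dx + \int_{\R^3}\chi_R\Bigl(\tfrac14\langle x\rangle^2+\tfrac12|x|^2-3\Bigr)|V|^2\,dx + \mathcal{E}_R,
\]
where $\mathcal{E}_R$ gathers all boundary contributions coming from $\nabla\chi_R$ and $\Delta\chi_R$, supported in the annulus $\{R\le|x|\le 2R\}$. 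Since $V\in L^2(\R^3)$ by Theorem~\ref{P3.7}, the $-3|V|^2$ piece is dominated by a fixed constant, so up to absolute constants the left-hand side controls $\int\chi_R\bigl(\langle x\rangle^2|\nabla V|^2+\langle x\rangle^2|V|^2\bigr)\,dx$.

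For the right-hand side, $\mathrm{div}\,V=0$ reduces the cubic self-interaction to $\frac12\int|V|^2 V\cdot\nabla(\chi_R\langle x\rangle^2)\,dx$; its principal piece $\int\chi_R(x\cdot V)|V|^2\,dx$ is controlled by $\|V\|_{L^4}^2\bigl(\int\chi_R|x|^2|V|^2\bigr)^{1/2}$ via H\"older and then absorbed via Young's inequality, using $V\in H^1\hookrightarrow L^4$. For the $U_0$-couplings I rely on Proposition~\ref{prop-U1}(iii) with $\alpha=1$, which gives $|U_0|\lesssim\langle x\rangle^{-1}$ and $|\nabla U_0|\lesssim\langle x\rangle^{-2}$; this turns $\int\chi_R\langle x\rangle^2|V\cdot\nabla U_0\cdot V|\,dx$ into a multiple of $\|V\|_{L^2}^2$, and bounds $\int\chi_R\langle x\rangle^2|U_0\cdot\nabla U_0\cdot V|\,dx$ by $\bigl(\int\chi_R|x|^2|V|^2\bigr)^{1/2}$ after Cauchy--Schwarz with the weight $|x|^2$, and similarly the term $\int U_0\cdot\nabla V\cdot\varphi\,dx$ after integration by parts. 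The pressure contribution $-\int P\,\mathrm{div}\,\varphi\,dx = -\int P\,V\cdot\nabla(\chi_R\langle x\rangle^2)\,dx$ is the key technical point: Calder\'on--Zygmund applied to~\eqref{eq.P} with $V\otimes V\in L^2$ (from $V\in H^1\hookrightarrow L^4$) and $U_0\otimes U_0\in L^2$ yields $P\in L^2(\R^3)$, so its principal part $-2\int P\chi_R\,x\cdot V\,dx$ is majorized by $2\|P\|_{L^2}\bigl(\int\chi_R|x|^2|V|^2\bigr)^{1/2}$ and absorbed into the coercive estimate.

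Assembling the above bounds, the cutoff remainders $\mathcal{E}_R$ (of potential size $O(R^2)\int_{R\le|x|\le 2R}|V|^2\,dx$, since $|x\cdot\nabla\chi_R|=O(1)$ is multiplied by $\langle x\rangle^2\sim R^2$ on the annulus) must be shown to stay uniformly bounded in $R$. I would handle this by a dyadic decomposition: interpolating between $V\in L^2$ and $V\in L^6$ (by Sobolev embedding) on the annular shells yields $R^2\int_{R\le|x|\le 2R}|V|^2\,dx\to 0$ along $R\to\infty$, modulo a bounded error that can be absorbed together with the remaining cubic and pressure terms. Monotone convergence ($\chi_R\uparrow 1$) then gives $\int_{\R^3}\langle x\rangle^2(|\nabla V|^2+|V|^2)\,dx<\infty$; since $|\nabla(|x|V)|\lesssim|V|+\langle x\rangle|\nabla V|$ (with the singularity at $0$ harmless because $V$ is smooth), this produces $W=|x|V\in H^1(\R^3)$. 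I expect the hard part to be this last synthesis: organizing the cutoff remainders, the pressure term, and the cubic self-interaction so that the weighted coercive norm on the left can absorb all of them simultaneously while keeping the bound uniform in $R$.
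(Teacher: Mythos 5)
Your overall strategy---test \eqref{eq.weak} with $(\text{weight})^2\,V$, exploit the good sign of the drift term $-\tfrac12\int x\cdot\nabla V\cdot\varphi-\tfrac12\int V\cdot\varphi$, and absorb everything else---is the same as the paper's. The difference lies entirely in the choice of truncation, and that is where the argument breaks. The paper tests with $h_\varepsilon^2V$ where $h_\varepsilon(x)=|x|(1+\varepsilon|x|^2)^{-3/4}$: this weight is globally defined (so there are no annulus boundary terms at all), it decays just enough at infinity that $|x|h_\varepsilon^2|V|\le\varepsilon^{-3/2}|V|\in L^2$ makes the test function admissible, it satisfies $|\nabla h_\varepsilon|\le C(1+\varepsilon|x|^2)^{-3/4}\le Cg_\varepsilon$ with $g_\varepsilon=(1+\varepsilon|x|^2)^{-1/2}$, and the drift term produces exactly $+\tfrac34\|g_\varepsilon h_\varepsilon V\|_{L^2}^2$, which is precisely the quantity needed to absorb every error generated by $\nabla h_\varepsilon$ (and by the pressure). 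Then one sends $\varepsilon\to0$. A compactly supported cutoff cannot reproduce this tuning.

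The concrete gap: with $\varphi=\chi_R\langle x\rangle^2V$ the drift term contributes, after integration by parts, the extra piece $\tfrac14\int_{\R^3}\langle x\rangle^2(x\cdot\nabla\chi_R)|V|^2\,\mathrm{d}x$, which is nonpositive on the left-hand side and hence becomes a positive quantity of size comparable to $R^2\int_{R\le|x|\le 2R}|V|^2\,\mathrm{d}x$ that must be bounded on the right; the diffusion and pressure cutoff remainders similarly contribute terms of order $R\int_{R\le|x|\le2R}|\nabla V|\,|V|\,\mathrm{d}x$ and $R\int_{R\le|x|\le2R}|P|\,|V|\,\mathrm{d}x$. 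Your proposed remedy --- that $R^2\int_{R\le|x|\le2R}|V|^2\,\mathrm{d}x\to0$ by interpolating between $V\in L^2$ and $V\in L^6$ --- is false: membership in $L^2\cap L^6$ gives $\int_{R\le|x|\le2R}|V|^2\,\mathrm{d}x=o(1)$ with no rate, and H\"older in the other direction ($\int_{R\le|x|\le2R}|V|^2\,\mathrm{d}x\le CR^2\|V\|_{L^6}^2$) only worsens the power of $R$. A field with $|V(x)|\sim|x|^{-2}$ at infinity lies in $H^1(\R^3)$ yet has $R^2\int_{R\le|x|\le2R}|V|^2\,\mathrm{d}x\sim R\to\infty$, and a dyadic choice $\|V\|_{L^2(\{2^j\le|x|\le2^{j+1}\})}\sim 2^{-j/2}j$ shows the remainder need not even stay bounded along a subsequence. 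Nothing available at this stage (only $V\in H^1$, $P\in L^2$) excludes such behavior --- the decay $|V|\lesssim|x|^{-3}\log|x|$ is a consequence of this proposition, not an input. The fix is to replace the sharp cutoff by a globally defined weight that simultaneously decays at infinity (for admissibility) and has logarithmic derivative pointwise dominated by the coercive drift contribution, which is exactly what the paper's $h_\varepsilon$ accomplishes.
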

\begin{proof}
Denoting $h_\varepsilon(x)\triangleq\frac{|x|}{(1+\varepsilon|x|^2)^{\frac34}}$ with $\varepsilon>0$, it is easy to check that $h^2_{\varepsilon}(x) V(x)\in H^1(\mathbb{R}^2)$ and satisfies  $$\big\||\cdot |h^2_\varepsilon V(\cdot)\big\|_{L^2(\mathbb{R}^3)}\leq C(\varepsilon).$$
 Now, choosing $\varphi(x)\triangleq\frac{|x|^2}{(1+\varepsilon|x|^2)^{\frac32}}V(x)=h^2_\varepsilon(x) V(x)$ in equality \eqref{eq.weak}, we easily  find that the vector field $W_\varepsilon(x)\triangleq h_\varepsilon(x) V(x)$ fulfills
 \begin{equation}\label{eq.weakIII-1}
 \begin{split}
 &\int_{\mathbb{R}^3}\nabla V:\nabla\big( h_\varepsilon W_\varepsilon\big)\,\mathrm{d}x-\frac12\int_{\mathbb{R}^3}x\cdot \nabla V\cdot \big( h_\varepsilon W_\varepsilon\big)\,\mathrm{d}x-\frac12\int_{\mathbb{R}^3} W^2_\varepsilon\,\mathrm{d}x\\
 =&\int_{\mathbb{R}^3}P \,\text{div}\,\big( h_\varepsilon W_\varepsilon\big)\,\mathrm{d}x+\int_{\mathbb{R}^3}V\cdot \nabla \big( h_\varepsilon W_\varepsilon\big)\cdot V\,\mathrm{d}x-\int_{\mathbb{R}^3}U_0\cdot \nabla  V\cdot \big( h_\varepsilon W_\varepsilon\big)\,\mathrm{d}x\\
 &-\int_{\mathbb{R}^3}V\cdot \nabla  U_0\cdot \big( h_\varepsilon W_\varepsilon\big)\,\mathrm{d}x-\int_{\mathbb{R}^3}U_0\cdot \nabla  U_0\cdot \big( h_\varepsilon W_\varepsilon\big)\,\mathrm{d}x.
 \end{split}
 \end{equation}
 A simple calculation yields that
 \begin{equation}\label{eq.weakIII-2}
 \begin{split}
 \int_{\mathbb{R}^3}\nabla V:\nabla\big( h_\varepsilon W_\varepsilon\big)\,\mathrm{d}x
=&\int_{\mathbb{R}^3}\nabla V:\big(\nabla h_\varepsilon\otimes W_\varepsilon\big)\,\mathrm{d}x+\int_{\mathbb{R}^3}\big(h_\varepsilon\nabla V\big):\nabla W_\varepsilon\,\mathrm{d}x\\
 =&\|\nabla W_\varepsilon\|^2_{L^2(\mathbb{R}^3)}+\int_{\mathbb{R}^3}\nabla V:\big(\nabla h_\varepsilon\otimes W_\varepsilon\big)\,\mathrm{d}x-\int_{\mathbb{R}^3}\big(\nabla h_\varepsilon\otimes V\big):\nabla W_\varepsilon\,\mathrm{d}x.
 \end{split}
 \end{equation}
 Since
 \begin{align*}
 x\cdot\nabla\left(\frac{|x|}{(1+\varepsilon|x|^2)^{\frac34}} V\right)=&\frac{|x|}{(1+\varepsilon|x|^2)^{\frac34}} \,x\cdot\nabla V +\frac{|x|}{(1+\varepsilon|x|^2)^{\frac34}}  V -\frac32 \frac{\varepsilon|x|^{3}}{(1+\varepsilon|x|^2)^{\frac74}}  V\\
 =&h_\varepsilon\big( \,x\cdot\nabla V\big) +W_\varepsilon -\frac32\, \frac{\varepsilon|x|^{2}}{1+\varepsilon|x|^2}  W_\varepsilon,
 \end{align*}
 we have
 \begin{equation}\label{eq.weakIII-3}
 \begin{split}
&-\frac12\int_{\mathbb{R}^3}x\cdot \nabla V\cdot \big( h_\varepsilon W_\varepsilon\big)\,\mathrm{d}x-\frac12\int_{\mathbb{R}^3} W^2_\varepsilon\,\mathrm{d}x\\
=&-\frac12\int_{\mathbb{R}^3}x\cdot \nabla W_\varepsilon\cdot  W_\varepsilon\,\mathrm{d}x-\frac34\int_{\mathbb{R}^3}\frac{\varepsilon|x|^{2}}{1+\varepsilon|x|^2}  W_\varepsilon\cdot W_\varepsilon\,\mathrm{d}x\\
=&\frac34\int_{\mathbb{R}^3}W^2_\varepsilon\,\mathrm{d}x-\frac34\int_{\mathbb{R}^3}\frac{\varepsilon|x|^{2}}{1+\varepsilon|x|^2}  W^2_\varepsilon\,\mathrm{d}x=\frac34\int_{\mathbb{R}^3}\frac{1}{1+\varepsilon|x|^2}  W^2_\varepsilon\,\mathrm{d}x.
 \end{split}
 \end{equation}
 Here we used the fact that $x(W^\varepsilon)^2\in W^{1,1}(\R^3).$

  Setting $g_\varepsilon(x)\triangleq\frac{1}{\sqrt{1+\varepsilon |x|^2}}$ and plugging estimates \eqref{eq.weakIII-2} and \eqref{eq.weakIII-3} in \eqref{eq.weakIII-1}, we immediately have
  \begin{equation}\label{eq.weakIII-4}
\begin{split}
&\big\|\nabla W_\varepsilon\big\|^2_{L^2(\mathbb{R}^3)}+\frac34\big\| W_\varepsilon\big\|^2_{L^2(\mathbb{R}^3)}\\
=&\int_{\mathbb{R}^3}P \,\text{div}\,\big( h_\varepsilon W_\varepsilon\big)\,\mathrm{d}x-\int_{\mathbb{R}^3}V\cdot \nabla \big( h_\varepsilon W_\varepsilon\big)\cdot V\,\mathrm{d}x-\int_{\mathbb{R}^3}\big( h_\varepsilon U_0\big)\cdot \nabla  V\cdot W_\varepsilon\,\mathrm{d}x\\
&-\int_{\mathbb{R}^3}W_\varepsilon\cdot \nabla  U_0\cdot  W_\varepsilon\,\mathrm{d}x-\int_{\mathbb{R}^3} \big( h_\varepsilon U_0\big)\cdot \nabla  U_0\cdot W_\varepsilon\,\mathrm{d}x-\int_{\mathbb{R}^3}\nabla V:\big(\nabla h_\varepsilon\otimes W_\varepsilon\big)\,\mathrm{d}x\\&+\int_{\mathbb{R}^3}\big(\nabla h_\varepsilon\otimes V\big):\nabla W_\varepsilon\,\mathrm{d}x.
\end{split}
\end{equation}
Thanks to $\mathrm{div}\,V=0,$ we have that
\begin{align*}
&\int_{\mathbb{R}^3}V\cdot \nabla \big( h_\varepsilon W_\varepsilon\big)\cdot V\,\mathrm{d}x\\
=&\int_{\mathbb{R}^3}V\cdot \nabla  W_\varepsilon\cdot W_\varepsilon \,\mathrm{d}x+\int_{\mathbb{R}^3}V\cdot\big( \nabla  h_\varepsilon \otimes W_\varepsilon\big)\cdot V\,\mathrm{d}x\\
=&\int_{\mathbb{R}^3}V\cdot\big( \nabla  h_\varepsilon \otimes W_\varepsilon\big)\cdot V\,\mathrm{d}x\\
=&\int_{\mathbb{R}^3}V\cdot\left( \frac{x}{|x|(1+\varepsilon|x|^2)^{\frac34}}\otimes W_\varepsilon\right)\cdot V\,\mathrm{d}x-\frac32\int_{\mathbb{R}^3}V\cdot\left( \frac{\varepsilon |x|x}{(1+\varepsilon|x|^2)^{\frac74}}\otimes W_\varepsilon\right)\cdot V\,\mathrm{d}x.
\end{align*}
By the H\"older inequality and the Young inequality, one has
\begin{align*}
\int_{\mathbb{R}^3}V\cdot\left( \frac{x}{|x|(1+\varepsilon|x|^2)}\otimes W_\varepsilon\right)\cdot V\,\mathrm{d}x
\leq&\|V\|^2_{L^{\frac{12}{5}}(\mathbb{R}^3)}\|W_\varepsilon\|_{L^6(\mathbb{R}^3)}\\
\leq&C\|V\|^4_{L^{\frac{12}{5}}(\mathbb{R}^3)}+\frac{1}{128}\|\nabla W_\varepsilon\|^2_{L^2(\mathbb{R}^3)}
\end{align*}
and
\begin{align*}
-\frac32\int_{\mathbb{R}^3}V\cdot\left( \frac{\varepsilon |x|x}{(1+\varepsilon|x|^2)^2}\otimes W_\varepsilon\right)\cdot V\,\mathrm{d}x
\leq&2\|V\|^2_{L^{\frac{12}{5}}(\mathbb{R}^3)}\|W_\varepsilon\|_{L^6(\mathbb{R}^3)}\\
\leq&C\|V\|^4_{L^{\frac{12}{5}}(\mathbb{R}^3)}+\frac{1}{128}\|\nabla W_\varepsilon\|^2_{L^2(\mathbb{R}^3)}.
\end{align*}
Thus, we have
\begin{equation*}
\int_{\mathbb{R}^3}V\cdot \nabla \big( h_\varepsilon W_\varepsilon\big)\cdot V\,\mathrm{d}x\leq C\|V\|^4_{L^{\frac{12}{5}}(\mathbb{R}^3)}+\frac{1}{64}\|\nabla W_\varepsilon\|^2_{L^2(\mathbb{R}^3)}.
\end{equation*}
Next, we obtain by some computations that
\begin{align*}
&-\int_{\mathbb{R}^3}\nabla V:\big(\nabla h_\varepsilon\otimes W_\varepsilon\big)\,\mathrm{d}x\\
=&-\int_{\mathbb{R}^3}\nabla V:\left(\frac{x}{|x|(1+\varepsilon|x|^2)^{\frac{3}{4}}}\otimes W_\varepsilon\right)\,\mathrm{d}x+\frac32\int_{\mathbb{R}^3}\nabla V:\left(\frac{\varepsilon|x|x}{(1+\varepsilon|x|^2)^{\frac74}}\otimes W_\varepsilon\right)\,\mathrm{d}x.
\end{align*}
By the H\"older inequality and the Young inequality, one has
\begin{align*}
-\int_{\mathbb{R}^3}\nabla V:\left(\frac{x}{|x|(1+\varepsilon|x|^2)^{\frac{3}{4}}}\otimes W_\varepsilon\right)\,\mathrm{d}x\leq&\|\nabla V\|_{L^2(\mathbb{R}^3)}\|g_\varepsilon W_\varepsilon\|_{L^2(\mathbb{R}^3)}\\
\leq&C\|\nabla V\|^2_{L^2(\mathbb{R}^3)}+\frac{1}{128}\|g_\varepsilon W_\varepsilon\|^2_{L^2(\mathbb{R}^3)}
\end{align*}
and
\begin{align*}
\frac32\int_{\mathbb{R}^3}\nabla V:\left(\frac{\varepsilon|x|x}{(1+\varepsilon|x|^2)^{\frac{7}{4}}}\otimes W_\varepsilon\right)\,\mathrm{d}x\leq&2\|\nabla V\|_{L^2(\mathbb{R}^3)}\|g_\varepsilon W_\varepsilon\|_{L^2(\mathbb{R}^3)}\\
\leq&C\|\nabla V\|^2_{L^2(\mathbb{R}^3)}+\frac{1}{128}\|g_\varepsilon W_\varepsilon\|^2_{L^2(\mathbb{R}^3)}.
\end{align*}
Therefore we have
\begin{equation*}
-\int_{\mathbb{R}^3}\nabla V:\big(\nabla h_\varepsilon\otimes W_\varepsilon\big)\,\mathrm{d}x\leq C\|\nabla V\|^2_{L^2(\mathbb{R}^3)}+\frac{1}{64}\|g_\varepsilon W_\varepsilon\|^2_{L^2(\mathbb{R}^3)}.
\end{equation*}
 Note that
 \begin{align*}
 &\int_{\mathbb{R}^3}\big(\nabla h_\varepsilon\otimes V\big):\nabla W_\varepsilon\,\mathrm{d}x\\
 =&\int_{\mathbb{R}^3}\left(\frac{|x|}{|x|(1+\varepsilon|x|^2)^{\frac34}}\otimes V\right):\nabla W_\varepsilon\,\mathrm{d}x-\frac32\int_{\mathbb{R}^3}\left(\frac{\varepsilon |x|x}{(1+\varepsilon|x|^2)^{\frac74}}\otimes V\right):\nabla W_\varepsilon\,\mathrm{d}x,
 \end{align*}
 we have by the H\"older inequality and the Young inequality that
 \begin{align*}
 \int_{\mathbb{R}^3}\left(\frac{x}{|x|(1+\varepsilon|x|^2)^{\frac34}}\otimes V\right):\nabla W_\varepsilon\,\mathrm{d}x\leq&\| V\|_{L^2(\mathbb{R}^3)}\|\nabla W_\varepsilon\|_{L^2(\mathbb{R}^3)}\\
 \leq&C\| V\|^2_{L^2(\mathbb{R}^3)}+\frac{1}{128}\|\nabla W_\varepsilon\|^2_{L^2(\mathbb{R}^3)}
 \end{align*}
 and
 \begin{align*}
-\frac32\int_{\mathbb{R}^3}\left(\frac{\varepsilon |x|x}{(1+\varepsilon|x|^2)^{\frac{7}{4}}}\otimes V\right):\nabla W_\varepsilon\,\mathrm{d}x\leq&2\| V\|_{L^2(\mathbb{R}^3)}\|\nabla W_\varepsilon\|_{L^2(\mathbb{R}^3)}\\
 \leq&C\| V\|^2_{L^2(\mathbb{R}^3)}+\frac{1}{128}\|\nabla W_\varepsilon\|^2_{L^2(\mathbb{R}^3)}.
 \end{align*}
 So we have
 \begin{equation*}
 \int_{\mathbb{R}^3}\big(\nabla h_\varepsilon\otimes V\big):\nabla W_\varepsilon\,\mathrm{d}x\leq C\| V\|^2_{L^2(\mathbb{R}^3)}+\frac{1}{64}\|\nabla W_\varepsilon\|^2_{L^2(\mathbb{R}^3)}.
 \end{equation*}
By the H\"older inequality and the Young inequality again, one has that
\begin{align*}
-\int_{\mathbb{R}^3}V\cdot\nabla U_0\cdot\big(h_\varepsilon W_\varepsilon\big) \,\mathrm{d}x=&-\int_{\mathbb{R}^3}\frac{|x|}{(1+\varepsilon|x|^2)^{\frac34}}V\cdot\nabla U_0 \cdot W_\varepsilon \,\mathrm{d}x\\
\leq&\sup_{x\in\mathbb{R}^3}\big||x|\nabla U_0\big|(x)\,\|V\|_{L^2(\mathbb{R}^3)}
\|g_\varepsilon W_\varepsilon\|_{L^2(\mathbb{R}^3)  }\\
\leq&C\Big(\sup_{x\in\mathbb{R}^3}\big|\langle x\rangle \nabla U_0\big|(x)\Big)^2\|V\|^2_{L^2(\mathbb{R}^3)}+\frac{1}{128}
\|g_\varepsilon W_\varepsilon\|^2_{L^2(\mathbb{R}^3)  }
\end{align*}
and
\begin{align*}
-\int_{\mathbb{R}^3}U_0\cdot\nabla U_0\cdot \big(h_\varepsilon W_\varepsilon\big) \,\mathrm{d}x=&-\int_{\mathbb{R}^3}\frac{|x|}{(1+\varepsilon|x|^2)^{\frac34}}U_0\cdot\nabla U_0 \cdot W_\varepsilon \,\mathrm{d}x\\
\leq &\sup_{x\in\mathbb{R}^3}\big||x| U_0\big|(x)\,\|\nabla U_0\|_{L^2(\mathbb{R}^3)}\|g_\varepsilon W_\varepsilon\|_{L^2(\mathbb{R}^3)}\\
\leq &C\Big(\sup_{x\in\mathbb{R}^3}\big|\langle x \rangle U_0\big|(x)\Big)^2\|\nabla U_0\|^2_{L^2(\mathbb{R}^3)}+\frac{1}{128}\|g_\varepsilon W_\varepsilon\|^2_{L^2(\mathbb{R}^3)}.
\end{align*}
 Similarly, one has
\begin{align*}
-\int_{\mathbb{R}^3}U_0\cdot \nabla  V\cdot \big(h_\varepsilon W_\varepsilon\big)\,\mathrm{d}x=&-\int_{\mathbb{R}^3}\frac{|x|}{(1+\varepsilon|x|^2)^{\frac34}}U_0\cdot\nabla V\cdot W_\varepsilon \,\mathrm{d}x\\
\leq &\sup_{x\in\mathbb{R}^3}\big||x| U_0\big|(x)\,\|\nabla V\|_{L^2(\mathbb{R}^3)}\|g_\varepsilon W_\varepsilon\|_{L^2(\mathbb{R}^3)}\\
\leq &16\Big(\sup_{x\in\mathbb{R}^3}\big|\langle x \rangle U_0\big|(x)\Big)^2\|\nabla V\|^2_{L^2(\mathbb{R}^3)}+\frac{1}{128}\|g_\varepsilon W_\varepsilon\|^2_{L^2(\mathbb{R}^3)}.
\end{align*}
Using $\text{div} V=0$, we have
\begin{equation*}\label{ep}
\int_{\mathbb{R}^3}P \,\text{div}\,\big( h_\varepsilon W_\varepsilon\big)\,\mathrm{d}x=\int_{\mathbb{R}^3}\frac{x}{|x|(1+\varepsilon|x|^2)^{\frac34}}\cdot W_{\varepsilon}P\,\mathrm{d}x-\frac32 \int_{\mathbb{R}^3}\frac{\varepsilon|x|x}{(1+\varepsilon|x|^2)^{\frac74}}\cdot W_{\varepsilon}P\,\mathrm{d}x.
\end{equation*}
Since $P\in L^2(\mathbb{R}^3)$, in terms of the H\"older inequality, we have
\begin{align*}
\int_{\mathbb{R}^3}\frac{x}{|x|(1+\varepsilon|x|^2)^{\frac34}}\cdot W_{\varepsilon}P\,\mathrm{d}x\leq&\|P\|_{L^2(\mathbb{R}^3)}\|g_\varepsilon W_\varepsilon \|_{L^2(\mathbb{R}^3)}\\
\leq&C\|P\|_{L^2(\mathbb{R}^3)}^2+\frac{1}{32}\|g_\varepsilon W_\varepsilon\|^2_{L^2(\mathbb{R}^3)}
\end{align*}
and
\begin{align*}
-\frac32\ \int_{\mathbb{R}^3}\frac{\varepsilon |x|x}{(1+\varepsilon|x|^2)^{\frac74}}\cdot W_{\varepsilon}P\,\mathrm{d}x\leq&2\|P\|_{L^2(\mathbb{R}^3)}\|g_\varepsilon W_\varepsilon \|_{L^2(\mathbb{R}^3)}\\
\leq&C\|P\|_{L^2(\mathbb{R}^3)}^2+\frac{1}{32}\|g_\varepsilon W_\varepsilon\|^2_{L^2(\mathbb{R}^3)}.
\end{align*}
Plugging both estimates above in \eqref{eq.weakIII-4}, we obtain
\begin{equation*}\label{epI}
\int_{\mathbb{R}^3}P \,\text{div}\,\big( h_\varepsilon W_\varepsilon\big)\,\mathrm{d}x\leq C\|P\|_{L^2(\mathbb{R}^3)}^2+\frac{1}{16}\|g_\varepsilon W_\varepsilon\|^2_{L^2(\mathbb{R}^3)}.
\end{equation*}
Collecting  all these estimates and using the fact that $(V,P)\in H^1(\R^3)\times L^2(\R^3)$, we eventually have
\begin{equation*}
\|g_\varepsilon W_\varepsilon\|^2_{L^2(\mathbb{R}^3)}+\|\nabla W_\varepsilon\|^2_{L^2(\mathbb{R}^3)}\leq C\big(U_0\big).
\end{equation*}
By the Lebesgue dominated convergence theorem, we get by taking $\varepsilon\to0+$ in the above inequality that
\begin{equation*}
\|W\|^2_{L^2(\mathbb{R}^3)}+\|\nabla W\|^2_{L^2(\mathbb{R}^3)}\leq C\big(U_0\big).
\end{equation*}
This implies the desired result in Proposition \ref{prop-xv}.
\end{proof}
With this weighted $H^1$-estimate,  we are going to improve the regularity of solution. Before doing this, we need to establish the following regularity estimate.

\begin{lem}\label{RNS}
	Let $f\in L^2(\mathbb{R}^3)$ and the divergence-free vector field $\overline{V}\in H^1(\mathbb{R}^3)$.  Assume that  $(V,P )$ is a  weak solution  of the following problem
	 \begin{equation*}\label{eq.NS}
	\left\{\begin{aligned}
	&-\Delta V+\overline{V}\cdot\nabla V-\frac12\big(x\cdot\nabla V+V\big)+\nabla P=f \\
	&\mathrm{div}\,V=0
	\end{aligned}\right. \;\quad\;\text{\rm in}\quad\mathbb{R}^3
	\end{equation*}
	that is,   $V\in L^2(\R^3),\,|x|V(x)\in L^2(\R^3)$, $P\in L^2(\mathbb{R}^3)$, and  for all vector fields $\varphi\in H^1(\mathbb{R}^3),$
	  \begin{equation}\label{eq.weakLNS}
	 \int_{\mathbb{R}^3}\nabla V:\nabla\varphi\,\mathrm{d}x-\frac12\int_{\mathbb{R}^3}\big(x\cdot \nabla V+V\big)\cdot\varphi\,\mathrm{d}x
	= \int_{\mathbb{R}^3}P\, \mathrm{div}\,\varphi\,\mathrm{d}x-\int_{\mathbb{R}^3}\overline{V}\cdot \nabla \varphi\cdot V\,\mathrm{d}x-\int_{\mathbb{R}^3}f\cdot \varphi\,\mathrm{d}x.
	\end{equation}
	Then there exists a positive constant  $C$ such that
	\begin{equation*}
	\|V\|_{H^2(\mathbb{R}^3)}\leq C\left(\big\|\overline{V}\big\|^2_{H^1(\mathbb{R}^3)}	\|V\|_{H^1(\mathbb{R}^3)}+\|f\|_{L^2(\mathbb{R}^3)}\right).
	\end{equation*}
\end{lem}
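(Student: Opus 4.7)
The plan is to derive the $H^2$-estimate by formally multiplying the equation by $-\Delta V$ and integrating over $\R^3$, then justifying the computation rigorously. The pressure term vanishes because $\mathrm{div}\,V=0$ forces $\mathrm{div}(\Delta V)=0$, so $\int\nabla P\cdot\Delta V\,\mathrm{d}x=0$. The drift term simplifies dramatically after two integrations by parts combined with $\mathrm{div}\,x = 3$: writing $x\cdot\nabla V + V=\mathrm{div}(x\otimes V)$ one finds the identity
$$\int_{\R^3}(x\cdot\nabla V + V)\cdot\Delta V\,\mathrm{d}x = -\tfrac{1}{2}\int_{\R^3}|\nabla V|^2\,\mathrm{d}x,$$
so that the apparently growing drift contributes only a harmless lower-order quantity. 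The remaining cubic term is handled by H\"older and Gagliardo--Nirenberg: $\|\overline V\cdot\nabla V\|_{L^2}\leq \|\overline V\|_{L^6}\|\nabla V\|_{L^3}\leq C\|\overline V\|_{H^1}\|\nabla V\|_{L^2}^{1/2}\|\Delta V\|_{L^2}^{1/2}$, and Young's inequality absorbs a quarter of $\|\Delta V\|_{L^2}^2$ into the left-hand side, leaving $C\|\overline V\|_{H^1}^4\|\nabla V\|_{L^2}^2$ on the right. Together with $\|f\|_{L^2}\|\Delta V\|_{L^2}\leq\tfrac14\|\Delta V\|_{L^2}^2+C\|f\|_{L^2}^2$ and the Calder\'on--Zygmund bound $\|V\|_{H^2}\lesssim\|V\|_{L^2}+\|\Delta V\|_{L^2}$, this formally gives the stated estimate.

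The rigorous justification proceeds in two stages. First I would establish $V\in H^2_{\rm loc}(\R^3)$ and $\nabla P\in L^2_{\rm loc}(\R^3)$ by rewriting the equation as the Stokes system
$$-\Delta V+\nabla P = f - \overline V\cdot\nabla V + \tfrac{1}{2}(x\cdot\nabla V+V),\qquad \mathrm{div}\,V=0,$$
and bootstrapping on any ball $B_R$: the coefficient $x$ is bounded there, and the worst term $\overline V\cdot\nabla V$ is initially in $L^{3/2}_{\rm loc}$ (from $\overline V\in L^6$, $\nabla V\in L^2$). Standard $L^p$-Stokes regularity then gives $V\in W^{2,3/2}_{\rm loc}\hookrightarrow W^{1,3}_{\rm loc}$, at which point $\overline V\cdot\nabla V\in L^2_{\rm loc}$ and a further application promotes $V$ to $H^2_{\rm loc}$. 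Second, to convert the local estimate into a global one, I would introduce a radial cutoff $\eta_R\in C_c^\infty(\R^3)$ with $\eta_R\equiv1$ on $B_R$, $\mathrm{supp}\,\eta_R\subset B_{2R}$, $|\nabla\eta_R|\leq C/R$, and test the weak formulation \eqref{eq.weakLNS} with $\varphi = -\eta_R^2\Delta V$ (admissible by Step 1). Repeating the computation above with the extra weight $\eta_R^2$ produces cutoff commutators $\mathcal R_R$: on $\mathrm{supp}\,\nabla\eta_R$ one has $|x\cdot\nabla\eta_R|\leq 2$, so all commutators from the drift term are uniformly bounded by $C\|V\|_{H^1}^2$. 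The pressure generates the extra commutator $2\int P\,\eta_R\,\nabla\eta_R\cdot\Delta V$ because $\mathrm{div}(\eta_R^2\Delta V)=2\eta_R\,\nabla\eta_R\cdot\Delta V$; this is controlled by Cauchy--Schwarz using the hypothesis $P\in L^2(\R^3)$. Sending $R\to\infty$ and applying monotone convergence to the main terms yields the global $H^2$ bound.

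The main obstacle is precisely this last point: even though $x\cdot\nabla V$ carries an unbounded coefficient, the cutoff argument has to handle simultaneously the pressure commutator and the cross-terms between $\nabla\eta_R$, $x$, $V$, and $\nabla V$, and produce bounds uniform in $R$. The clean cancellation stemming from $\mathrm{div}\,x = 3$ is what ultimately prevents the drift from competing with $\|\Delta V\|_{L^2}^2$; without this identity one would have no reasonable way to absorb $\int x\cdot\nabla V\cdot\Delta V$ into the left-hand side. The hypothesis $|x|V\in L^2$ is what ensures that the cross-term $\int\eta_R(\nabla\eta_R\cdot V)(x\cdot\nabla V)$ and related commutators remain finite uniformly in $R$, so all three ingredients---local regularity, the drift identity, and the weighted $L^2$-assumption---are essential.
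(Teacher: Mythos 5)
Your strategy is correct in its essentials, but it takes a genuinely different route from the paper. The paper proves this lemma by Nirenberg's difference-quotient method: it tests the weak formulation directly with $\varphi=-D_k^{-h}\big(D_k^hV\big)$, which is automatically an admissible $H^1$ test function since $V\in H^1(\R^3)$, observes that the pressure term vanishes because $\int D_k^hP\,\mathrm{div}\,D_k^hV\,\mathrm{d}x=0$, handles the drift term by the same integration by parts against $\mathrm{div}\,x=3$ that you exploit, bounds the convection term by the identical H\"older--Gagliardo--Nirenberg--Young chain you propose, and then lets $h\to0$. This buys exactly what your two-stage construction is designed to supply by hand: no preliminary local regularity is needed, no cutoff is introduced, and hence no commutators or uniformity-in-$R$ issues arise. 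Your version, by contrast, pays for the more transparent formal computation ($\,\int(x\cdot\nabla V+V)\cdot\Delta V=-\tfrac12\|\nabla V\|_{L^2}^2$, which is correct) with two technical debts that you should be aware of. First, $\varphi=-\eta_R^2\Delta V$ is only in $L^2$ when $V\in H^2_{\rm loc}$, not in $H^1$, so it is not admissible in the weak formulation as stated; you must either bootstrap one step further to $V\in H^3_{\rm loc}$ or switch to the pointwise form of the equation (available once $V\in H^2_{\rm loc}$ and $\nabla P\in L^2_{\rm loc}$) and multiply by $\eta_R^2\Delta V$. Second, the interpolation $\|\nabla V\|_{L^3}\leq C\|\nabla V\|_{L^2}^{1/2}\|\Delta V\|_{L^2}^{1/2}$ invokes the global $\|\Delta V\|_{L^2}$, which is precisely the quantity whose finiteness is at stake; inside the cutoff argument you must localize it to $\|\eta_R\nabla V\|_{L^3}\lesssim\|\nabla V\|_{L^2}^{1/2}\|\nabla(\eta_R\nabla V)\|_{L^2}^{1/2}$ and absorb the resulting commutators. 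Both repairs are routine, and your accounting of the drift commutators (bounded because $|x\cdot\nabla\eta_R|\lesssim1$ on $\mathrm{supp}\,\nabla\eta_R$) and of the pressure commutator via $P\in L^2(\R^3)$ is sound; note only that the hypothesis $|x|V\in L^2$ enters to justify the integration by parts of the drift term at infinity rather than in the commutators you cite.
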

\begin{proof}
	We denoted by $D^h_ku$   the difference quotient
	$$D_k^hu=\frac{u(x+h\mathbf{e}_k)-u(x)}{h}\qquad \text{with}\quad h\in\mathbb{R}\backslash\{0\}.$$
	We take
$\varphi(x)\triangleq-D_k^{-h}\big(D^h_kV\big),$ we compute
	\begin{align*}
-	\int_{\mathbb{R}^3}\nabla V:\nabla D_k^{-h}\big(D^h_k V\big)\,\mathrm{d}x
=&-	\int_{\mathbb{R}^3}\nabla V: D_k^{-h}\big(D^h_k \nabla V\big)\,\mathrm{d}x\\
=&\int_{\mathbb{R}^3}\big(D^h_k \nabla V\big): \big(D^h_k \nabla V\big)\,\mathrm{d}x=\big\|D^h_k \nabla V\big\|^2_{L^2(\mathbb{R}^3)}.
	\end{align*}
	Integrating by parts, we get
	\begin{align*}
&	\frac12\int_{\mathbb{R}^3}x\cdot \nabla V\cdot D_k^{-h}\big(D^h_kV\big)\,\mathrm{d}x+\frac12\int_{\mathbb{R}^3} V\cdot D_k^{-h}\big(D^h_kV\big)\,\mathrm{d}x\\
=&-	\frac12\int_{\mathbb{R}^3}D_k^{h}\big(x\cdot \nabla V\big)\cdot D^h_kV\,\mathrm{d}x-\frac12\int_{\mathbb{R}^3}D^h_kV\cdot D^h_kV\,\mathrm{d}x\\
=&-	\frac12\int_{\mathbb{R}^3}(x+h\mathbf{e}_k)\cdot \nabla D_k^{h} V\cdot D^h_kV\,\mathrm{d}x-	\frac12\int_{\mathbb{R}^3}\big(D_k^{h}x\big)\cdot \nabla V\cdot D^h_kV\,\mathrm{d}x-\frac12\int_{\mathbb{R}^3}D^h_kV\cdot D^h_kV\,\mathrm{d}x\\
=&-	\frac12\int_{\mathbb{R}^3}x\cdot \nabla D_k^{h} V\cdot D^h_kV\,\mathrm{d}x-	\frac12\int_{\mathbb{R}^3} \partial_{x_k}V\cdot D^h_kV\,\mathrm{d}x-\frac12\int_{\mathbb{R}^3}D^h_kV\cdot D^h_kV\,\mathrm{d}x\\
=&-\frac12\int_{\mathbb{R}^3} \partial_{x_k}V\cdot D^h_kV\,\mathrm{d}x+\big\|D^h_k  V\big\|^2_{L^2(\mathbb{R}^3)}.
	\end{align*}
Next, we turn to deal with the term involving pressure. Integrating by parts gives
\begin{align*}
-\int_{\mathbb{R}^3}P \text{div}\,D_k^{-h}\big(D^h_kV\big)\,\mathrm{d}x=&
\int_{\mathbb{R}^3}D^h_kP\, \text{div}\,D^h_kV\,\mathrm{d}x=0.
\end{align*}	
 According to the fact that $\text{div}\,V=0$, we  obtain
\begin{align*}
-\int_{\mathbb{R}^3}\overline{V}\cdot \nabla D_k^{-h}\big(D^h_kV\big)\cdot V\,\mathrm{d}x=&\int_{\mathbb{R}^3}\overline V(x+h\mathbf{e}_k)\cdot \nabla D^h_kV(x)\cdot D^h_kV(x)\,\mathrm{d}x\\
&+\int_{\mathbb{R}^3}D^h_k\overline{V}(x)\cdot \nabla D^h_kV(x)\cdot V(x)\,\mathrm{d}x\\
=&\int_{\mathbb{R}^3}D^h_k\overline{V}(x)\cdot \nabla D^h_kV(x)\cdot V(x)\,\mathrm{d}x.
\end{align*}
 So, we have from \eqref{eq.weakLNS} that
\begin{equation*}
\begin{split}
\|D^h_k\nabla V\|_{L^2(\mathbb{R}^3)}^2+\frac12\|D^h_kV\|_{L^2(\mathbb{R}^3)}^2=&\int_{\mathbb{R}^3}f(x)D^{-h}_kD^h_kV(x)\,\mathrm{d}x+\frac12\int_{\mathbb{R}^3} \partial_{x_k}V\cdot D^h_kV\,\mathrm{d}x\\
&+\int_{\mathbb{R}^3}D^h_k\overline{V}(x)\cdot \nabla D^h_kV(x)\cdot V(x)\,\mathrm{d}x.
\end{split}
\end{equation*}	
Also, we see that
\begin{align*}
\int_{\mathbb{R}^3}D_k^{-h}\big(D^h_kV\big)\cdot D_k^{-h}\big(D^h_kV\big)\,\mathrm{d}x=&\int_{\mathbb{R}^3}D_k^{h}\big(D^h_kV\big)\cdot D_k^{h}\big(D^h_kV\big)\,\mathrm{d}x
\leq  C\|\nabla D^h_kV\|^2_{L^2(\mathbb{R}^3)}.
\end{align*}	
This estimate enables us to conclude that
\begin{align*}
\int_{\mathbb{R}^3}f(x)D^{-h}_kD^h_kV(x)\,\mathrm{d}x\leq &\|f\|_{L^2(\mathbb{R}^3)}\big\|D^{-h}_kD^h_kV\big\|_{L^2(\mathbb{R}^3)}\\
\leq &C\|f\|_{L^2(\mathbb{R}^3)}\big\|\nabla D^h_kV\big\|_{L^2(\mathbb{R}^3)}\\
\leq &C\|f\|^2_{L^2(\mathbb{R}^3)}+\frac{1}{16}\big\|\nabla D^h_kV\big\|^2_{L^2(\mathbb{R}^3)}.
\end{align*}
By the H\"older inequality, one has
\begin{equation*}
\frac12\int_{\mathbb{R}^3} \partial_{x_k}V\cdot D^h_kV\,\mathrm{d}x\leq\|\partial_{x_k}V\|_{L^2(\mathbb{R}^3)}\|D^h_kV\|_{L^2(\mathbb{R}^3)}\leq C\|\nabla V\|^2_{L^2(\mathbb{R}^3)}.
\end{equation*}
By the interpolation inequality, we see that
\begin{align*}
\int_{\mathbb{R}^3}D^h_k\overline{V}(x)\cdot \nabla D^h_kV(x)\cdot V(x)\,\mathrm{d}x
\leq&\|\overline{V}\|_{L^6(\mathbb{R}^3)}\|D^h_kV\|_{L^3(\mathbb{R}^3)}\|D^h_k\nabla V\|_{L^2(\mathbb{R}^3)}\\
\leq&C\|\overline{V}\|_{L^6(\mathbb{R}^3)}\|D^h_kV\|^\frac12_{L^2(\mathbb{R}^3)}\|D^h_k\nabla V\|^{\frac{3}{2}}_{L^2(\mathbb{R}^3)}\\
\leq&C\|\nabla \overline{V}\|^4_{L^2(\mathbb{R}^3)}\|\nabla  V \|^2_{L^2(\mathbb{R}^3)}+\frac{1}{16}\|D^h_k\nabla V\|^2_{L^2(\mathbb{R}^3)}.
\end{align*}
Collecting all estimates yields
\[\|D^h_k\nabla V\|_{L^2(\mathbb{R}^3)}^2+\|D^h_kV\|_{L^2(\mathbb{R}^3)}^2\leq C\big\|\nabla \overline{V}\big\|^4_{L^2(\mathbb{R}^3)}\|\nabla  V \|^2_{L^2(\mathbb{R}^3)}+C\|f\|^2_{L^2(\mathbb{R}^3)}.\]
Taking $h\to0$ in the above inequality, we readily have
\[\|\nabla^2 V\|_{L^2(\mathbb{R}^3)}^2+\|\nabla V\|_{L^2(\mathbb{R}^3)}^2\leq C\big\|\nabla \overline{V}\big\|^4_{L^2(\mathbb{R}^3)}\|\nabla  V \|^2_{L^2(\mathbb{R}^3)}+C\|f\|^2_{L^2(\mathbb{R}^3)}.\]
This completes the proof of the lemma.
\end{proof}
According to Lemma \ref{RNS}, we will show the $H^2(\R^3)$-estimate for $V$ and the $\dot{H}^1(\mathbb{R}^3)$-estimate for $|x|P$.
\begin{prop}\label{coro-h2}
	Let the couple $(V,P)\in H^1(\R^3)\times L^2(\R^3)$  satisfying  \eqref{eq.weak}.
	Then we have
	\[V(x)\in H^2(\mathbb{R}^3)\]
	and
	\begin{equation*}\label{eq.EQ}
	Q(x)\triangleq|x|P\in \dot{H}^1(\mathbb{R}^3).
	\end{equation*}
\end{prop}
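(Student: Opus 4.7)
\emph{The plan.} The proposition splits into two assertions: $V\in H^2(\R^3)$ and $|x|P\in\dot H^1(\R^3)$. I will handle them in that order, first applying Lemma \ref{RNS} to extract the $H^2$-regularity of $V$, then using the Calder\'on--Zygmund representation of the pressure together with a weighted $L^2$-boundedness of the Riesz transforms to bound $|x|\nabla P$ in $L^2$.

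\emph{Step 1: $V\in H^2(\R^3)$.} I would recast the weak form \eqref{eq.weak} in the linear framework of Lemma \ref{RNS} by choosing $\overline V:=V$ (which is divergence-free and in $H^1(\R^3)$) and source
\[f:=-U_0\cdot\nabla U_0-U_0\cdot\nabla V-V\cdot\nabla U_0.\]
The membership $f\in L^2(\R^3)$ is immediate: the first term is controlled by \eqref{E3.11+}, while Proposition \ref{prop-U1}(iii) at $\alpha=1$ yields the pointwise bounds $|U_0|(x)+|\nabla U_0|(x)\lesssim\langle x\rangle^{-1}$, so $U_0,\nabla U_0\in L^\infty(\R^3)$, and combined with $V,\nabla V\in L^2(\R^3)$ this controls the remaining two terms. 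Lemma \ref{RNS} then delivers $V\in H^2(\R^3)$, and the Sobolev embedding $H^2(\R^3)\hookrightarrow L^\infty(\R^3)$ further gives $V\in L^\infty(\R^3)$.

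\emph{Step 2: $|x|P\in\dot H^1(\R^3)$.} Since $\nabla(|x|P)=\tfrac{x}{|x|}P+|x|\nabla P$ and $P\in L^2(\R^3)$, the task reduces to $|x|\nabla P\in L^2(\R^3)$. Taking the divergence of \eqref{E3.2} and using $\mathrm{div}\,V=\mathrm{div}\,U_0=0$, the pressure satisfies $-\Delta P=\partial_i\partial_jM_{ij}$ with
\[M_{ij}:=V_iV_j+U_{0,i}V_j+V_iU_{0,j}+U_{0,i}U_{0,j},\]
so $P=R_iR_jM_{ij}$, with $R_k$ the Riesz transforms, and consequently $\partial_kP=R_iR_j(\partial_kM_{ij})$. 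Because $|x|^2\in A_2(\R^3)$ (indeed $|x|^{\beta}\in A_2(\R^n)$ whenever $-n<\beta<n$), the weighted $L^2$-boundedness of the Calder\'on--Zygmund operator $R_iR_j$ yields
\[\||x|\nabla P\|_{L^2(\R^3)}\le C\||x|\nabla M\|_{L^2(\R^3)}.\]
I would then check that each of the four contributions to $|x|\nabla M$ lies in $L^2(\R^3)$: for $|x|V\cdot\nabla V$ use $V\in L^\infty$ (from Step 1) and $|x|\nabla V\in L^2$, where the latter follows from Proposition \ref{prop-xv} together with the identity $|x|\nabla V=\nabla(|x|V)-\tfrac{x}{|x|}\otimes V$; for the mixed terms $|x|V\cdot\nabla U_0$ and $|x|U_0\cdot\nabla V$ use the boundedness of $|x||U_0|$ and $|x||\nabla U_0|\lesssim\langle x\rangle^{-1}$ paired with $V,\nabla V\in L^2$; and for the pure $U_0$ term use $|x||U_0||\nabla U_0|\lesssim\langle x\rangle^{-2}\in L^2(\R^3)$.

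\emph{Main obstacle.} The only nontrivial analytic ingredient is the invocation of the weighted Calder\'on--Zygmund estimate for the double Riesz transforms on $L^2(\R^3,|x|^2\,dx)$; the rest is a bookkeeping of the decay and Sobolev bounds from Propositions \ref{prop-U1} and \ref{prop-xv} combined with the $H^2$-regularity produced in Step 1. A direct alternative, should one wish to avoid explicit $A_2$-weight theory, is to re-run the weighted testing argument of Proposition \ref{prop-xv} on \eqref{eq.weak} with a cutoff weight engineered so that the pressure term, after integration by parts, reproduces $\||x|\nabla P\|_{L^2}^{2}$ on the left-hand side, and then absorb it into the remaining already-estimated quantities.
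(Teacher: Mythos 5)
Your proof is correct, and it splits naturally into a part that mirrors the paper and a part that does not. Step 1 is essentially the paper's own argument: the paper likewise obtains $V\in H^2(\R^3)$ by feeding the weak formulation into Lemma \ref{RNS} with $\overline V=V$ and $f=-U_0\cdot\nabla U_0-U_0\cdot\nabla V-V\cdot\nabla U_0$; the only thing you should add is that the hypothesis $|x|V\in L^2(\R^3)$ required by Lemma \ref{RNS} is exactly what Proposition \ref{prop-xv} supplies, so that proposition is needed already in Step 1, not only in Step 2. For the pressure, your route is genuinely different from the paper's. The paper does not invoke weighted Calder\'on--Zygmund theory; it writes the Poisson equation satisfied by $Q=|x|P$, namely $-\Delta Q=|x|\,\mathrm{div}\big(V\cdot\nabla V+U_0\cdot\nabla V+V\cdot\nabla U_0+U_0\cdot\nabla U_0\big)-2\tfrac{x}{|x|}\cdot\nabla P-\tfrac{2}{|x|}P$, and shows each right-hand term is a bounded linear functional on $\dot H^1(\R^3)$ by testing against $\varphi\in C_0^\infty(\R^3)$, integrating by parts, and using Hardy's inequality together with the weighted bounds on $W$ and the decay of $U_0$; this places $-\Delta Q$ in $\dot H^{-1}(\R^3)$ and hence $Q$ in $\dot H^1(\R^3)$. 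You instead use $P=R_iR_jM_{ij}$, the fact that $|x|^2\in A_2(\R^3)$ (correct, since $|x|^\beta\in A_2(\R^n)$ for $-n<\beta<n$), and the $A_2$-weighted $L^2$ boundedness of the double Riesz transform to get $\||x|\nabla P\|_{L^2}\le C\||x|\nabla M\|_{L^2}$, then add $\tfrac{x}{|x|}P\in L^2$. Both routes consume the same inputs --- $|x|\nabla V\in L^2$ extracted from Proposition \ref{prop-xv}, $V\in L^\infty$ from Step 1, and the pointwise decay of $U_0$ and $\nabla U_0$ from Proposition \ref{prop-U1} --- and your term-by-term verification of $|x|\nabla M\in L^2$ checks out. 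The paper's version buys self-containedness (only Hardy's inequality and duality, no Muckenhoupt theory), while yours is shorter and delivers the more explicit intermediate conclusion $|x|\nabla P\in L^2$ in one stroke; either is a valid proof of the proposition.
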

\begin{proof}
By Proposition \ref{prop-xv}, we have that $W\in H^1(\R^3)$, moreover, we obtain  by Lemma \ref{RNS} that $W\in H^2(\R^2)$.

Thanks to \eqref{eq.P}, we see that $Q=|x|P$ satisfies
\begin{align*}
-\Delta Q=|x|\mathrm{div}\,\big(V\cdot\nabla V+U_{0}\cdot\nabla V+V\cdot\nabla U_{0}+U_{0}\cdot\nabla U_{0}\big)-2\frac{x}{|x|}\cdot\nabla P-\frac{2}{|x|}P.
\end{align*}
By the Cauchy-Schwarz inequality and the Hardy inequality, we get that for any $\varphi\in C_{0}^{\infty}(\R^{3})$
\begin{align*}
\Big\langle\frac{2}{|x|}P,\varphi\Big\rangle&\leq C \|P\|_{L^{2}(\R^{3})}\|\nabla \varphi\|_{L^{2}(\R^{3})}
\end{align*}
and
\begin{align*}
\Big\langle\frac{x}{|x|}\cdot\nabla P,\varphi\Big\rangle&=-\int_{\R^{3}}P\partial_{i}\Big(\frac{x_{i}}{|x|}\varphi\Big)\,{\rm d}x\\
&\leq C \|P\|_{L^{2}(\R^{3})}\|\nabla \varphi\|_{L^{2}(\R^{3})}
+\|P\|_{L^{2}(\R^{3})}\Big\|\frac{\varphi}{|x|}\Big\|_{L^{2}(\R^{3})}\leq C \|P\|_{L^{2}(\R^{3})}\|\nabla \varphi\|_{L^{2}(\R^{3})}.
\end{align*}
Similarly, one has that  for any  $\varphi\in C_{0}^{\infty}(\R^{3})$,
\begin{align*}
&\left\langle|x|\mathrm{div}\,\big(V\cdot\nabla V+U_{0}\cdot\nabla V+V\cdot\nabla U_{0}+U_{0}\cdot\nabla U_{0}\big), \varphi\right\rangle\\=&
 -\int_{\R^{3}}\big(V\cdot\nabla V+U_{0}\cdot\nabla V+V\cdot\nabla U_{0}+U_{0}\cdot\nabla U_{0}\big)\nabla(|x|\varphi) \,{\rm d}x\\
 \leq &\int_{\R^{3}}\Big(\frac{|\varphi|}{|x|}+|\nabla\varphi|\Big)\Big\{|x||V\cdot\nabla V|
 +|x||U_{0}\cdot\nabla V|+|x||V\cdot\nabla U_{0}|+|x||U_{0}\cdot\nabla U_{0}|\Big\} \,{\rm d}x\\
 \leq&C\Big(\|W\|_{L^4(\R^3)}\big(\|\nabla V\|_{L^4(\R^3)}+\|\nabla U_0\|_{L^4(\R^3)}\big)+\big\||\cdot|U_0(\cdot)\big\|_{L^\infty(\R^3)}\big(\|\nabla V\|_{L^2(\R^3)}\\
 &\,\,\,\,\,\,\,+\|\nabla U_0\|_{L^2(\R^3)}\big)\Big)
  \leq  C(V, U_{0})\|\nabla\varphi\|_{L^{2}(\R^{3})}.
\end{align*}
Combining these results and using the density argument yield the required estimate in Proposition \ref{RNS}.
\end{proof}
With these regularity estimates in hand, we are going to show  $H^2$-estimate for $|x|V$ which implies that $|x|V(x)$ is bounded.
\begin{prop}\label{prop-w-2}
Let the couple $(V,P)\in H^1(\R^3)\times L^2(\R^3)$  satisfying \eqref{eq.weak}.
Then we have
\[W(x)=|x|V(x)\in H^2(\mathbb{R}^3).\]
Moreover,
\[\big\|\langle \cdot\rangle V(\cdot)\big\|_{\dot{B}^0_{\infty,1}(\mathbb{R}^3)}<+\infty.\]
\end{prop}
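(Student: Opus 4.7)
The plan is to derive an elliptic system for $W=|x|V$ of the type treated in Lemma \ref{RNS}, then to obtain $W\in H^2(\mathbb{R}^3)$ by running the difference-quotient energy estimate from that lemma in the present non-divergence-free setting, and finally to conclude the Besov bound by embedding. The inputs $V\in H^2(\mathbb{R}^3)$ and $Q=|x|P\in\dot H^1(\mathbb{R}^3)$ from Proposition \ref{coro-h2}, together with $W\in H^1(\mathbb{R}^3)$ from Proposition \ref{prop-xv}, are all crucial.

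\emph{Step 1: equation for $W$.} Following the regularization used in Proposition \ref{prop-xv}, I set $h_\varepsilon(x)=|x|/(1+\varepsilon|x|^2)^{3/4}$, multiply the weak form of the equation for $V$ by $h_\varepsilon$, and rearrange each term into one involving $W_\varepsilon=h_\varepsilon V$ via
\begin{align*}
-h_\varepsilon\Delta V&=-\Delta W_\varepsilon+V\Delta h_\varepsilon+2\nabla h_\varepsilon\cdot\nabla V,\\
h_\varepsilon V\cdot\nabla V&=V\cdot\nabla W_\varepsilon-(V\cdot\nabla h_\varepsilon)V,\\
h_\varepsilon\,x\cdot\nabla V&=x\cdot\nabla W_\varepsilon-(x\cdot\nabla h_\varepsilon)V,\\
h_\varepsilon\nabla P&=\nabla(h_\varepsilon P)-P\nabla h_\varepsilon,
\end{align*}
and pass to the limit $\varepsilon\to0^+$ by dominated convergence. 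The output is that $W$ solves, in $\mathcal{D}'(\mathbb{R}^3)$,
\[
-\Delta W+V\cdot\nabla W-\tfrac12\bigl(x\cdot\nabla W+W\bigr)+\nabla Q=G,
\]
with $Q=|x|P$ and
\[
G=|x|f-\frac{2V}{|x|}-2\frac{x}{|x|}\cdot\nabla V+\Bigl(V\cdot\frac{x}{|x|}\Bigr)V+P\frac{x}{|x|}-\tfrac12 W,
\]
where $f=-U_0\cdot\nabla V-V\cdot\nabla U_0-U_0\cdot\nabla U_0$. Crucially, $\mathrm{div}\,W=V\cdot x/|x|\neq 0$, so Lemma \ref{RNS} does not apply directly.

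\emph{Step 2: $G\in L^2$ and adapted $H^2$-estimate.} All terms in $G$ lie in $L^2(\mathbb{R}^3)$: $V/|x|\in L^2$ by Hardy's inequality, $(x/|x|)\cdot\nabla V\in L^2$ from $\nabla V\in L^2$, $(V\cdot x/|x|)V\in L^2$ since $V\in H^2\hookrightarrow L^\infty$, $Px/|x|\in L^2$ from $P\in L^2$, $W\in L^2$ from Proposition \ref{prop-xv}, and $|x|f\in L^2$ because $|x|U_0,|x|\nabla U_0\in L^\infty$ by Proposition \ref{prop-U1}. To obtain $H^2$-regularity, I then run the difference-quotient argument of Lemma \ref{RNS} on the equation for $W$ using the test function $\varphi=-D_k^{-h}D_k^h W$; every term is handled as there except for the pressure contribution, which now yields $\int_{\mathbb{R}^3}(D_k^h Q)\,\mathrm{div}(D_k^h W)\,\mathrm{d}x$ instead of zero. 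Since $\mathrm{div}\,W=V\cdot x/|x|\in H^1(\mathbb{R}^3)$ (Hardy and $V\in H^1$) and $\nabla Q\in L^2$ by Proposition \ref{coro-h2}, this cross term is bounded uniformly in $h$ by $C\|\nabla Q\|_{L^2}\|\nabla(\mathrm{div}\,W)\|_{L^2}$. Letting $h\to 0$ closes the estimate and gives $W\in H^2(\mathbb{R}^3)$.

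\emph{Step 3: Besov bound and main obstacle.} From $W,V\in H^2(\mathbb{R}^3)$ one deduces $\langle x\rangle V\in H^2(\mathbb{R}^3)$: the relevant weighted quantities are controlled by identities such as $|x|\nabla V=\nabla W-(x/|x|)V\in L^2$ and $|x|\nabla^2 V=\nabla^2 W-V\otimes\nabla(x/|x|)-2(x/|x|)\otimes\nabla V\in L^2$. Bernstein's inequality then provides the embedding $H^2(\mathbb{R}^3)\hookrightarrow\dot B^0_{\infty,1}(\mathbb{R}^3)$ via $\sum_j\|\Delta_j u\|_{L^\infty}\lesssim\sum_j 2^{3j/2}\|\Delta_j u\|_{L^2}\lesssim\|u\|_{H^2}$, which yields $\|\langle\cdot\rangle V\|_{\dot B^0_{\infty,1}(\mathbb{R}^3)}<+\infty$. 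The main technical obstacle is the non-divergence-free character of $W$, which rules out a direct appeal to Lemma \ref{RNS} and forces the pressure term in the energy estimate to be absorbed by hand; this is only possible because the prior weighted control $Q=|x|P\in\dot H^1$ from Proposition \ref{coro-h2} is already in place, reflecting the tight interplay between the regularity steps established in this section.
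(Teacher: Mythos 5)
Your overall strategy (a weighted difference-quotient $H^2$ estimate, then the embedding $H^2(\R^3)\hookrightarrow \dot B^0_{\infty,1}(\R^3)$) is the same as the paper's, but there is a genuine gap in Step 2 caused by your decision in Step 1 to pass to the limit $\varepsilon\to0$ in the weight \emph{before} running the energy estimate. Once you test the equation for $W=|x|V$ with $\varphi=-D_k^{-h}D_k^hW$, the drift term produces $\int_{\R^3}x\cdot\nabla D_k^hW\cdot D_k^hW\,\mathrm{d}x$, and to make sense of this integral and to integrate it by parts (to get the good term $\tfrac34\|D_k^hW\|_{L^2}^2$) you need the weighted bound $|x|W=|x|^2V\in L^2(\R^3)$ — this is exactly the hypothesis ``$|x|V(x)\in L^2$'' that Lemma \ref{RNS} imposes on its unknown, and it is precisely what is \emph{not} available at this stage of the bootstrap: quadratic decay of $V$ is only obtained later, in Proposition \ref{prop-2p}, as a consequence of the present proposition. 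Without it, $\int|x|\,|\nabla D_k^hW|\,|D_k^hW|\,\mathrm{d}x$ is not known to be finite, so the estimate cannot even be set up. The paper avoids this by keeping the regularized weight inside the test function for the whole computation: it tests \eqref{eq.weak} with $-D_k^{-h}h_\varepsilon^2D_k^hV$, where $h_\varepsilon(x)=|x|(1+\varepsilon|x|^2)^{-3/4}$ decays like $|x|^{-1/2}$ at infinity for each fixed $\varepsilon$, so every integral converges; the resulting bounds are uniform in $\varepsilon$ and $h$, and the limits are taken only at the very end. A secondary benefit of that choice, which your route forgoes, is that the test function is a (scalar-weighted) difference quotient of the divergence-free field $V$, so the pressure term only ever sees $\nabla h_\varepsilon^2$ and the whole non-solenoidal issue with $\mathrm{div}\,W=\frac{x}{|x|}\cdot V$ never arises. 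If you want to keep your formulation in terms of $W$, you must either first establish some intermediate weighted integrability of $W$ or reinstate a decaying cutoff/regularization in the test function and track the commutators, which essentially reproduces the paper's computation.

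Two smaller points. First, there is a sign slip in $G$: since $|x|\,x\cdot\nabla V=x\cdot\nabla W-W$, the identity $-\tfrac12|x|(x\cdot\nabla V+V)=-\tfrac12\,x\cdot\nabla W$ means you should carry $+\tfrac12W$ (not $-\tfrac12W$) to the right-hand side if you insist on writing the left-hand side as $-\tfrac12(x\cdot\nabla W+W)$; this is harmless since $W\in L^2$ is already known from Proposition \ref{prop-xv}. Second, your treatment of the pressure cross term $\int(D_k^hQ)\,\mathrm{div}(D_k^hW)\,\mathrm{d}x$ via $\nabla Q\in L^2$ (Proposition \ref{coro-h2}) and $\mathrm{div}\,W=\frac{x}{|x|}\cdot V\in H^1$ (Hardy) is correct and would work if the rest of the estimate could be closed.
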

\begin{proof}
Taking $\varphi(x)\triangleq-D^{-h}_kh_\varepsilon^2D^h_kV$  in equality \eqref{eq.weak}, we immediately have
\begin{equation*}\label{eq.wWI}
\begin{split}
&\int_{\mathbb{R}^3}\nabla V:\nabla\big(-D^{-h}_kh_\varepsilon^2D^h_k V\big)\,\mathrm{d}x-\frac12\int_{\mathbb{R}^3}x\cdot \nabla V\cdot\big(-D^{-h}_kh_\varepsilon^2D^h_k V\big)\,\mathrm{d}x\\
&-\frac12\int_{\mathbb{R}^3} V\cdot\big(-D^{-h}_kh_\varepsilon^2D^h_k V\big)\,\mathrm{d}x-\int_{\mathbb{R}^3}P\, \mathrm{div}\,\big(-D^{-h}_kh_\varepsilon^2D^h_kV\big)\,\mathrm{d}x\\
=&\int_{\mathbb{R}^3}V\cdot \nabla \big(-D^{-h}_kh_\varepsilon^2D^h_k V\big)\cdot V\,\mathrm{d}x-\int_{\mathbb{R}^3}U_0\cdot \nabla V\cdot \big(-D^{-h}_kh_\varepsilon^2D^h_k V\big)\,\mathrm{d}x\\
&-\int_{\mathbb{R}^3}V\cdot \nabla U_0\cdot \big(-D^{-h}_kh_\varepsilon^2D^h_k V\big)\,\mathrm{d}x-\int_{\mathbb{R}^3}U_0\cdot \nabla U_0\cdot  \big(-D^{-h}_kh_\varepsilon^2D^h_k V\big) \,\mathrm{d}x.
\end{split}
\end{equation*}
Some calculations yield
\begin{align*}
&\int_{\mathbb{R}^3}\nabla V:\nabla\big(-D^{-h}_kh_\varepsilon^2D^h_k V\big)\,\mathrm{d}x\\
=&\int_{\mathbb{R}^3}D^h_k\nabla V:\nabla\big(h_\varepsilon^2D^h_k V\big)\,\mathrm{d}x\\
=&\big\|h_\varepsilon D^h_k\nabla V\big\|_{L^2(\mathbb{R}^3)}^2+2\int_{\mathbb{R}^3}h_\varepsilon D^h_k\nabla V:\bigg(\frac{x}{|x|(1+\varepsilon|x|^2)^{\frac34}} \otimes D^h_k V\bigg)\,\mathrm{d}x\\
&-3\int_{\mathbb{R}^3}h_\varepsilon D^h_k\nabla V:\bigg(\frac{\varepsilon |x|x}{(1+\varepsilon|x|^2)^{\frac74}} \otimes D^h_k V\bigg)\,\mathrm{d}x.
\end{align*}
By the H\"older inequality and the Young inequality, one has
\begin{align*}
&-2\int_{\mathbb{R}^3}h_\varepsilon D^h_k\nabla V:\bigg(\frac{x}{|x|(1+\varepsilon|x|^2)^{\frac34}} \otimes D^h_k V\bigg)\,\mathrm{d}x
\\&+3\int_{\mathbb{R}^3}h_\varepsilon D^h_k\nabla V:\bigg(\frac{\varepsilon |x|x}{(1+\varepsilon|x|^2)^\frac{7}{4}} \otimes D^h_k V\bigg)\,\mathrm{d}x\\
\leq&C\|\nabla V\|_{L^2(\mathbb{R}^3)}\big\|h_\varepsilon D^h_k\nabla V\big\|_{L^2(\mathbb{R}^3)}\leq C\|\nabla V\|^2_{L^2(\mathbb{R}^3)}+\frac{1}{64}\big\|h_\varepsilon D^h_k\nabla V\big\|_{L^2(\mathbb{R}^3)}^2.
\end{align*}
By the fact that $\mathrm{div}\,V=0,$ we obtain
\begin{align*}
&-\frac12\int_{\mathbb{R}^3}x\cdot \nabla V\cdot\big(-D^{-h}_kh_\varepsilon^2D^h_k V\big)\,\mathrm{d}x-\frac12\int_{\mathbb{R}^3} V\cdot\big(-D^{-h}_kh_\varepsilon^2D^h_k V\big)\,\mathrm{d}x\\
=&-\frac12\int_{\mathbb{R}^3}\big(x+h\mathbf{e}_k\big) \nabla D^h_kV h_\varepsilon^2D^h_k V\,\mathrm{d}x-\frac12\int_{\mathbb{R}^3} \partial_{x_k} V\cdot\big(h_\varepsilon^2D^h_k V\big)\,\mathrm{d}x-\frac12\int_{\mathbb{R}^3} h_\varepsilon^2D^h_kV D^h_kV\,\mathrm{d}x\\
=&\frac12\int_{\mathbb{R}^3}(x\cdot \nabla h_\varepsilon)\,D^h_kV  \big(h_\varepsilon D^h_kV\big)\,\mathrm{d}x-\frac{h}{2}\int_{\mathbb{R}^3}  \partial_{x_k}D^h_kV h_\varepsilon^2D^h_kV\,\mathrm{d}x-\frac12\int_{\mathbb{R}^3} \partial_{x_k}D^h_k V \big(h_\varepsilon^2D^h_k V\big)\,\mathrm{d}x\\
&+\frac14\int_{\mathbb{R}^3} h_\varepsilon^2D^h_kV\cdot D^h_k V\,\mathrm{d}x\\
=&\frac12\int_{\mathbb{R}^3}(x\cdot \nabla h_\varepsilon)\,D^h_kV \big(h_\varepsilon D^h_kV\big)\,\mathrm{d}x+\frac{(1+h)}{4}\int_{\mathbb{R}^3}  \big(\partial_{x_k}h_\varepsilon^2\big)D^h_kV D^h_k V\,\mathrm{d}x+\frac14\int_{\mathbb{R}^3} h_\varepsilon^2D^h_kV D^h_k V\,\mathrm{d}x.
\end{align*}
We see that
\begin{align*}
&\frac12\int_{\mathbb{R}^3}(x\cdot \nabla h_\varepsilon)\,D^h_kV\cdot \big(h_\varepsilon D^h_kV\big)\,\mathrm{d}x\\
=&\frac12\int_{\mathbb{R}^3} \frac{|x|}{(1+\varepsilon|x|^2)^{\frac34} }\,(h_\varepsilon D^h_kV)\cdot D^h_kV\,\mathrm{d}x-\frac34\int_{\mathbb{R}^3} \frac{\varepsilon|x|^3}{(1+\varepsilon|x|^2)^{\frac74} }\,(h_\varepsilon D^h_kV)\cdot D^h_kV\,\mathrm{d}x\\
=&\frac12\int_{\mathbb{R}^3} h^2_\varepsilon D^h_kV\cdot D^h_kV\,\mathrm{d}x-\frac34\int_{\mathbb{R}^3} \frac{\varepsilon|x|^2}{1+\varepsilon|x|^2 }h^2_\varepsilon \,D^h_kV\cdot D^h_kV\,\mathrm{d}x.
\end{align*}
Thus, we have
\begin{align*}
&-\frac12\int_{\mathbb{R}^3}x\cdot \nabla V\cdot\big(-D^{-h}_kh_\varepsilon^2D^h_k V\big)\,\mathrm{d}x-\frac12\int_{\mathbb{R}^3} V\cdot\big(-D^{-h}_kh_\varepsilon^2D^h_k V\big)\,\mathrm{d}x\\
=&\frac34\int_{\mathbb{R}^3} \frac{\varepsilon|x|^2}{1+\varepsilon|x|^2 }h^2_\varepsilon \,D^h_kV\cdot D^h_kV\,\mathrm{d}x+\frac{(1+h)}{4}\int_{\mathbb{R}^3}  \big(\partial_{x_k}h_\varepsilon^2\big)D^h_kV\cdot D^h_k V\,\mathrm{d}x.
\end{align*}
We calculate
\begin{align*}
\frac{1+h}{4}\int_{\mathbb{R}^3}  \big(\partial_{x_k}h_\varepsilon^2\big)D^h_kV\cdot D^h_k V\,\mathrm{d}x=&\frac{1+h}{2}\int_{\mathbb{R}^3}  \frac{x_k}{|x|(1+\varepsilon|x|^2)^{\frac34}}\big(h_\varepsilon D^h_kV\big)\cdot D^h_k V\,\mathrm{d}x\\
&-\frac{3(1+h)}{4}\int_{\mathbb{R}^3}  \frac{\varepsilon|x|x_k}{(1+\varepsilon|x|^2)^{\frac74}}\big(h_\varepsilon D^h_kV\big)\cdot D^h_k V\,\mathrm{d}x.
\end{align*}
Since $|h|\leq1$, we have by the H\"older inequality and the Young inequality that
\begin{align*}
&-\frac{(1+h)}{2}\int_{\mathbb{R}^3}  \frac{x_k}{|x|(1+\varepsilon|x|^2)^{\frac34}}\big(h_\varepsilon D^h_kV\big)\cdot D^h_k V\,\mathrm{d}x\\
\leq&C\|\nabla V\|_{L^2(\mathbb{R}^3)}\big\|g_\varepsilon\big(h_\varepsilon D^h_kV\big)\big\|_{L^2(\mathbb{R}^3)}
\leq C\|\nabla V\|^2_{L^2(\mathbb{R}^3)}+\frac{1}{64}\big\|g_\varepsilon\big(h_\varepsilon D^h_kV\big)\big\|^2_{L^2(\mathbb{R}^3)}
\end{align*}
and
\begin{align*}
&\frac{3(1+h)}{4}\int_{\mathbb{R}^3}  \frac{\varepsilon|x|x_k}{(1+\varepsilon|x|^2)^{\frac74}}\big(h_\varepsilon D^h_kV\big)\cdot D^h_k V\,\mathrm{d}x\\
\leq&C\|\nabla V\|_{L^2(\mathbb{R}^3)}\big\|g_\varepsilon\big(h_\varepsilon D^h_kV\big)\big\|_{L^2(\mathbb{R}^3)}
\leq  C\|\nabla V\|^2_{L^2(\mathbb{R}^3)}+\frac{1}{64}\big\|g_\varepsilon\big(h_\varepsilon D^h_kV\big)\big\|^2_{L^2(\mathbb{R}^3)}.
\end{align*}
Therefore, we have
\begin{align*}
 -\frac{(1+h)}{4}\int_{\mathbb{R}^3}  \big(\partial_{x_k}h_\varepsilon^2\big)D^h_kV\cdot D^h_k V\,\mathrm{d}x
\leq C\|\nabla V\|^2_{L^2(\mathbb{R}^3)}+\frac{1}{32}\big\|g_\varepsilon\big(h_\varepsilon D^h_kV\big)\big\|^2_{L^2(\mathbb{R}^3)}.
\end{align*}
A simple calculation yields
\begin{align*}
\int_{\mathbb{R}^3}P\, \mathrm{div}\,\big(-D^{-h}_kh_\varepsilon^2D^h_kV\big)\,\mathrm{d}x=&\int_{\mathbb{R}^3}D^{h}_kP\, \mathrm{div}\,\big(h_\varepsilon^2D^h_kV\big)\,\mathrm{d}x\\
=&\int_{\mathbb{R}^3}D^{h}_kP\, \nabla h^2_\varepsilon\cdot\, D^h_kV\,\mathrm{d}x\\
=&2\int_{\mathbb{R}^3}D^{h}_kP\, \frac{x}{|x|(1+\varepsilon|x|^2)^{\frac34}}\cdot\,\big(h_\varepsilon D^h_kV\big)\,\mathrm{d}x\\
=&2\int_{\mathbb{R}^3}D^{h}_kP\, \frac{x}{|x|(1+\varepsilon|x|^2)^{\frac34}}\cdot\,\big(h_\varepsilon D^h_kV\big)\,\mathrm{d}x\\
&-3\int_{\mathbb{R}^3}D^{h}_kP\, \frac{\varepsilon|x|x}{(1+\varepsilon|x|^2)^{\frac74}}\cdot\,\big(h_\varepsilon D^h_kV\big)\,\mathrm{d}x.
\end{align*}
By the H\"older inequality and the Young inequality, one has
\begin{align*}
&2\int_{\mathbb{R}^3}D^{h}_kP\, \frac{x}{|x|(1+\varepsilon|x|^2)^{\frac34}}\cdot\,\big(h_\varepsilon D^h_kV\big)\,\mathrm{d}x-3\int_{\mathbb{R}^3}D^{h}_kP\, \frac{\varepsilon|x|x}{(1+\varepsilon|x|^2)^{\frac74}}\cdot\,\big(h_\varepsilon D^h_kV\big)\,\mathrm{d}x\\
\leq&2\big\||\cdot|D^h_kP\big\|_{L^2(\mathbb{R}^3)}\|D^h_kV\|_{L^2(\mathbb{R}^3)}+3\big\||\cdot|D^h_kP\big\|_{L^2(\mathbb{R}^3)}\|D^h_kV\|_{L^2(\mathbb{R}^3)}
\leq C\|Q\|_{L^2(\mathbb{R}^3)}\|\nabla V\|_{L^2(\mathbb{R}^3)}.
\end{align*}
For the convection term, we get by integration by parts that
\begin{align*}
&\int_{\mathbb{R}^3}V\cdot \nabla \big(-D^{-h}_kh_\varepsilon^2D^h_k V\big)\cdot V\,\mathrm{d}x\\
=&\int_{\mathbb{R}^3}V(x+h\mathbf{e}_k)\cdot \nabla \big(h_\varepsilon^2D^h_k V\big)\cdot D^h_kV\,\mathrm{d}x+\int_{\mathbb{R}^3}D^h_kV\cdot \nabla \big(h_\varepsilon^2D^h_k V\big)\cdot V\,\mathrm{d}x\\
=&\int_{\mathbb{R}^3}V(x+h\mathbf{e}_k)\cdot \nabla h_\varepsilon\, \big(h_\varepsilon D^h_k V\big)\cdot D^h_kV\,\mathrm{d}x+\int_{\mathbb{R}^3}D^h_kV\cdot \nabla h_\varepsilon\, \big(h_\varepsilon D^h_k V\big)\cdot V\,\mathrm{d}x\\
&+\int_{\mathbb{R}^3}h_\varepsilon D^h_kV\cdot \nabla \big(h_\varepsilon D^h_k V\big)\cdot V\,\mathrm{d}x.
\end{align*}
By the H\"older inequality, we find that
\begin{align*}
&\int_{\mathbb{R}^3}V(x+h\mathbf{e}_k)\cdot \nabla h_\varepsilon\, \big(h_\varepsilon D^h_k V\big)\cdot D^h_kV\,\mathrm{d}x\\
=&\int_{\mathbb{R}^3}V(x+h\mathbf{e}_k)\cdot \frac{x}{|x|(1+\varepsilon|x|^2)^{\frac34}}\, \big(h_\varepsilon D^h_k V\big)\cdot D^h_kV\,\mathrm{d}x\\
&-\frac{3}{2}\int_{\mathbb{R}^3}V(x+h\mathbf{e}_k)\cdot \frac{\varepsilon|x|x}{(1+\varepsilon|x|^2)^{\frac74}}\, \big(h_\varepsilon D^h_k V\big)\cdot D^h_kV\,\mathrm{d}x\\
\leq&C\|V\|_{L^\infty(\mathbb{R}^3)}\|\nabla V\|_{L^2(\mathbb{R}^3)}\big\|g_\varepsilon\big(h_\varepsilon D^h_k V\big)\big\|_{L^2(\mathbb{R}^3)}\\
\leq&C\|V\|^2_{L^\infty(\mathbb{R}^3)}\|\nabla V\|^2_{L^2(\mathbb{R}^3)}+\frac{1}{36}\big\|g_\varepsilon\big(h_\varepsilon D^h_k V\big)\big\|^2_{L^2(\mathbb{R}^3)}.
\end{align*}
Similarly, we have
\begin{align*}
\int_{\mathbb{R}^3}D^h_kV\cdot \nabla h_\varepsilon\, \big(h_\varepsilon D^h_k V\big)\cdot V\,\mathrm{d}x
\leq&C\|V\|^2_{L^\infty(\mathbb{R}^3)}\|\nabla V\|^2_{L^2(\mathbb{R}^3)}+\frac{1}{36}\big\|g_\varepsilon\big(h_\varepsilon D^h_k V\big)\big\|^2_{L^2(\mathbb{R}^3)}.
\end{align*}
A simple calculation yields
\begin{align*}
&\int_{\mathbb{R}^3}h_\varepsilon D^h_kV\cdot \nabla \big(h_\varepsilon D^h_k V\big)\cdot V\,\mathrm{d}x\\
=&\int_{\mathbb{R}^3}h^2_\varepsilon D^h_kV\cdot \nabla \big( D^h_k V\big)\cdot V\,\mathrm{d}x+\int_{\mathbb{R}^3}h_\varepsilon D^h_kV\cdot \nabla \big(h_\varepsilon \big)\,D^h_k V\cdot V\,\mathrm{d}x.
\end{align*}
On one hand,
\begin{align*}
\int_{\mathbb{R}^3}h^2_\varepsilon D^h_kV\cdot \nabla \big( D^h_k V\big)\cdot V\,\mathrm{d}x\leq &\|V\|_{L^\infty(\mathbb{R}^3)}\|\nabla V\|_{L^2(\mathbb{R}^3)}\big\|h_\varepsilon \nabla D^h_k V\big\|_{L^2(\mathbb{R}^3)}\\
\leq &\|V\|^2_{L^\infty(\mathbb{R}^3)}\|\nabla V\|^2_{L^2(\mathbb{R}^3)}+\frac{1}{36}\big\|h_\varepsilon \nabla D^h_k V\big\|^2_{L^2(\mathbb{R}^3)}.
\end{align*}
On the other hand,
\begin{align*}
&\int_{\mathbb{R}^3}h_\varepsilon D^h_kV\cdot \nabla \big(h_\varepsilon \big)\,D^h_k V\cdot V\,\mathrm{d}x\\
=&\int_{\mathbb{R}^3}\big(h_\varepsilon D^h_kV\big)\cdot  \frac{x}{|x|(1+\varepsilon|x|^2)^{\frac34}}\,D^h_k V\cdot V\,\mathrm{d}x-\frac32\int_{\mathbb{R}^3}\big(h_\varepsilon D^h_kV\big)\cdot  \frac{\varepsilon|x|x}{(1+\varepsilon|x|^2)^{\frac74}}\,D^h_k V\cdot V\,\mathrm{d}x\\
\leq &C\|V\|_{L^\infty(\mathbb{R}^3)}\|\nabla V\|_{L^2(\mathbb{R}^3)}\big\|g_\varepsilon\big(h_\varepsilon D^h_k V\big)\big\|_{L^2(\mathbb{R}^3)}\\
\leq &C\|V\|^2_{L^\infty(\mathbb{R}^3)}\|\nabla V\|^2_{L^2(\mathbb{R}^3)}+\frac{1}{36}\big\|g_\varepsilon\big(h_\varepsilon D^h_k V\big)\big\|^2_{L^2(\mathbb{R}^3)}.
\end{align*}
By the H\"older inequality and the Young inequality, one has
\begin{align*}
&-\int_{\mathbb{R}^3}U_0\cdot \nabla V\cdot \big(-D^{-h}_kh_\varepsilon^2D^h_k V\big)\,\mathrm{d}x\\
=&-\int_{\mathbb{R}^3}D^h_kU_0\cdot \nabla V(x+h\mathbf{e}_k)\cdot \big(h_\varepsilon^2D^h_k V\big)\,\mathrm{d}x-\int_{\mathbb{R}^3}U_0\cdot \nabla D^h_kV\cdot \big(h_\varepsilon^2D^h_k V\big)\,\mathrm{d}x\\
\leq &C\left(\big\||\cdot|\nabla U_0(\cdot)\big\|_{L^\infty(\mathbb{R}^3)}\|\nabla V\|_{L^2(\mathbb{R}^3)}+\big\||\cdot| U_0(\cdot)\big\|_{L^\infty(\mathbb{R}^3)}\|\nabla^2 V\|_{L^2(\mathbb{R}^3)}\right)\big\|g_\varepsilon\big(h_\varepsilon D^h_k V\big)\big\|_{L^2(\mathbb{R}^3)}\\
\leq &C\big\|\langle\cdot\rangle\nabla U_0(\cdot)\big\|^2_{L^\infty(\mathbb{R}^3)}\|\nabla V\|^2_{L^2(\mathbb{R}^3)}+C\big\|\langle\cdot\rangle U_0(\cdot)\big\|^2_{L^\infty(\mathbb{R}^3)}\|\nabla^2 V\|^2_{L^2(\mathbb{R}^3)}+\frac{1}{36}\big\|g_\varepsilon\big(h_\varepsilon D^h_k V\big)\big\|^2_{L^2(\mathbb{R}^3)}.
\end{align*}
We observe that
\begin{align*}
&-\int_{\mathbb{R}^3}V\cdot \nabla U_0\cdot \big(-D^{-h}_kh_\varepsilon^2D^h_k V\big)\,\mathrm{d}x\\
=&-\int_{\mathbb{R}^3}V(x+h\mathbf{e}_k)\cdot \nabla D^h_kU_0\cdot \big(h_\varepsilon^2D^h_k V\big)\,\mathrm{d}x-\int_{\mathbb{R}^3}D^h_kV\cdot \nabla U_0\cdot \big(h_\varepsilon^2D^h_k V\big)\,\mathrm{d}x\\
\leq &C\big\|\langle\cdot\rangle^2\nabla^2 U_0(\cdot)\big\|_{L^\infty(\mathbb{R}^3)}\|\nabla V\|_{L^2(\mathbb{R}^3)}\big\|V\big\|_{L^2(\mathbb{R}^3)}+C\big\|\langle\cdot\rangle^2 \nabla U_0(\cdot)\big\|_{L^\infty(\mathbb{R}^3)}\|\nabla V\|^2_{L^2(\mathbb{R}^3)}.
\end{align*}
Similarly, we can show that
\begin{align*}
&-\int_{\mathbb{R}^3}U_0\cdot \nabla U_0\cdot \big(-D^{-h}_kh_\varepsilon^2D^h_k V\big)\,\mathrm{d}x\\
=&-\int_{\mathbb{R}^3}U_0(x+h\mathbf{e}_k)\cdot \nabla D^h_kU_0\cdot \big(h_\varepsilon^2D^h_k V\big)\,\mathrm{d}x-\int_{\mathbb{R}^3}D^h_kU_0\cdot \nabla U_0\cdot \big(h_\varepsilon^2D^h_k V\big)\,\mathrm{d}x\\
\leq &C\big\|\langle\cdot\rangle^2\nabla^2 U_0(\cdot)\big\|_{L^\infty(\mathbb{R}^3)}\big(\|\nabla V\|_{L^2(\mathbb{R}^3)}\big\|U_0\big\|_{L^2(\mathbb{R}^3)}+ \|\nabla U_0\|_{L^2(\mathbb{R}^3)}\|\nabla V\|_{L^2(\mathbb{R}^3)}\big).
\end{align*}
Collecting all estimates implies
\begin{align*}
\big\|h_\varepsilon\nabla D^h_kV \big\|^2_{L^2(\mathbb{R}^3)}+\big\|g_\varepsilon\big(h_\varepsilon D^h_kV\big) \big\|^2_{L^2(\mathbb{R}^3)}\leq C\big(U_0\big).
\end{align*}
Taking $h\to0$ entails
\[\|\Delta W\|^2_{L^2(\mathbb{R}^3)}+\|W\|^2_{L^2(\mathbb{R}^3)}<+\infty.\]
This estimate together with the embedding theorem that $H^2(\mathbb{R}^3)\hookrightarrow \dot{B}^0_{\infty,1}( \mathbb{R}^3)$ entails
\[\big\||\cdot|V(\cdot)\big\|_{\dot{B}^0_{\infty,1}(\mathbb{R}^3)}<+\infty.\]
This combined with the fact that  $\|V\|_{\dot{B}^0_{\infty,1}(\mathbb{R}^3)}<+\infty$ enables us to conclude the desired result in the proposition.
\end{proof}
Next, we will further improve the regularity for the couple $(V,\,P)$ by using the bootstrapping argument.
\begin{prop}\label{prop-nv}
Let the couple $(V,P)\in H^1(\R^3)\times L^2(\R^3)$  satisfying  \eqref{eq.weak}.
Then we have $V\in H^3(\R^3)$ and $E\triangleq|x|\nabla V\in H^2(\R^3)$.
\end{prop}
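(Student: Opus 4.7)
The plan is to work with the equation satisfied by the tangential derivative $Y_\ell \triangleq \partial_\ell V$, and to mirror the weighted energy scheme of Propositions~\ref{prop-xv}, \ref{coro-h2}, \ref{prop-w-2} at this differentiated level. First I would apply $\partial_\ell$ to \eqref{E3.2} (with $\alpha=1$) to obtain, after collecting terms,
\begin{equation*}
-\Delta Y_\ell + V\cdot\nabla Y_\ell - \tfrac12\bigl(x\cdot\nabla Y_\ell + Y_\ell\bigr) + \nabla \Pi_\ell = G_\ell, \qquad \mathrm{div}\, Y_\ell = 0,
\end{equation*}
with $\Pi_\ell \triangleq \partial_\ell P$ and source
\[
G_\ell \triangleq -\tfrac12 Y_\ell - Y_\ell\cdot\nabla V - U_0\cdot\nabla Y_\ell - \partial_\ell U_0\cdot\nabla V - Y_\ell\cdot\nabla U_0 - V\cdot\nabla\partial_\ell U_0 - \partial_\ell(U_0\cdot\nabla U_0).
\]
This equation has exactly the structure treated in Lemma~\ref{RNS} with $\overline V = V$.

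The first step will be to show $V\in H^3(\R^3)$. For this it suffices to verify $G_\ell\in L^2(\R^3)$ and then invoke Lemma~\ref{RNS}. The known regularity $V\in H^2(\R^3)$ provided by Proposition~\ref{prop-w-2} gives $\nabla V\in L^2\cap L^6$ by the embedding $H^1\hookrightarrow L^6$, so $Y_\ell\cdot\nabla V\in L^2$; the bounds $|D^\beta U_0(x)|\leq C\langle x\rangle^{-1-|\beta|}$ from Proposition~\ref{prop-U1} control every remaining term (using $V\in L^2$, $\nabla V\in L^2$, and $V\in H^2\hookrightarrow L^\infty$ where needed). Lemma~\ref{RNS} then delivers $Y_\ell\in H^2(\R^3)$, i.e.\ $V\in H^3(\R^3)$.

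The second and main step will be to promote this into $E=|x|\nabla V\in H^2(\R^3)$, which amounts to $|x|Y_\ell\in H^2(\R^3)$ for each $\ell$. I will follow, verbatim, the three-stage weighted scheme that produced Propositions~\ref{prop-xv}, \ref{coro-h2}, \ref{prop-w-2}: (a) test the $Y_\ell$-equation against $h_\varepsilon^2(x)Y_\ell(x)$ with $h_\varepsilon(x)=|x|(1+\varepsilon|x|^2)^{-3/4}$ to obtain, after $\varepsilon\to 0^+$, the weighted $H^1$ bound $|x|Y_\ell\in H^1(\R^3)$; (b) take the divergence of the $Y_\ell$-equation to get a Poisson equation for $\Pi_\ell$ and run the duality/Hardy-inequality argument of Proposition~\ref{coro-h2} to obtain $|x|\Pi_\ell\in\dot H^1(\R^3)$; (c) test against $-D_k^{-h}h_\varepsilon^2 D_k^h Y_\ell$ and send $h\to 0$ and $\varepsilon\to 0^+$ to obtain the weighted $H^2$ bound. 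The extra terms $Y_\ell\cdot\nabla V$, $\partial_\ell U_0\cdot\nabla V$, $V\cdot\nabla\partial_\ell U_0$, $\partial_\ell(U_0\cdot\nabla U_0)$ in $G_\ell$ generate additional pieces in each weighted identity, and these have to be reabsorbed into $\|\nabla W_\varepsilon\|_{L^2}^2$ and $\|g_\varepsilon W_\varepsilon\|_{L^2}^2$ (with $W_\varepsilon=h_\varepsilon Y_\ell$ and $g_\varepsilon=(1+\varepsilon|x|^2)^{-1/2}$).

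The hard part will be closing this last set of weighted estimates. Exactly as in the proof of Proposition~\ref{prop-w-2}, the dangerous contributions are those involving the convection $V\cdot\nabla Y_\ell$ and the newly appearing product $Y_\ell\cdot\nabla V$; both require an $L^\infty$-type bound on $V$ with the sharp weight. The key is that Proposition~\ref{prop-w-2} already supplies $\big\|\langle\cdot\rangle V\big\|_{\dot B^0_{\infty,1}(\R^3)}<+\infty$, equivalently $|V(x)|\leq C\langle x\rangle^{-1}$, so that $\big\||x|\nabla V\cdot V\big\|_{L^2}$ and related weighted products can be controlled by $\|\nabla V\|_{L^2}$ times absolute constants, after which the standard H\"older--Young--interpolation scheme of Proposition~\ref{prop-w-2} closes. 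Once $|x|Y_\ell\in H^2(\R^3)$ is established for each $\ell$, one has $E\in H^2(\R^3)$ by summation, completing the proof.
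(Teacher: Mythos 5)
Your overall strategy is viable, but it diverges from the paper's proof in a way worth spelling out, and one of your key claims is not the mechanism that actually closes the estimates. For the first step ($V\in H^3$) your route — differentiate \eqref{E3.2}, observe that $Y_\ell=\partial_\ell V$ is divergence-free and satisfies an equation of exactly the form in Lemma~\ref{RNS} with $\overline V=V$, check $G_\ell\in L^2$, and apply the lemma — is fine and arguably cleaner than the paper's, which instead estimates the right-hand side of $-\Delta V=\frac12 x\cdot\nabla V+\frac12 V-\mathbb{P}(\cdots)$ in $\dot H^1$ and invokes elliptic regularity. (You should still note that Lemma~\ref{RNS} requires $|x|Y_\ell\in L^2$ and $\Pi_\ell=\partial_\ell P\in L^2$; both follow from Propositions~\ref{prop-w-2} and~\ref{coro-h2} together with the smoothness of $P$, but neither is free.) For the second step the paper does something much shorter than your three-stage replay: it writes down the equation satisfied by $E_k=|x|\partial_{x_k}V$ and $P_k=|x|\partial_{x_k}P$ \emph{directly} — so the weight is already built into the unknown — verifies that every source term is in $L^2$ using the bounds already in hand ($W\in H^2$, $\|W\|_{L^\infty}$, Hardy's inequality, and the pointwise decay of $U_0$), and applies Lemma~\ref{RNS} once. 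Your plan reproves from scratch, at the differentiated level, weighted information that Proposition~\ref{prop-w-2} already supplies (your stage (a) target $|x|Y_\ell\in H^1$ is immediate from $W\in H^2$, since $|x|\nabla V=\nabla W-\tfrac{x}{|x|}\otimes V$).

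The point that needs correction is your claimed mechanism for closing the weighted identities. You single out $V\cdot\nabla Y_\ell$ and $Y_\ell\cdot\nabla V$ as the dangerous terms and assert that the decay $|V(x)|\leq C\langle x\rangle^{-1}$ from Proposition~\ref{prop-w-2} controls both. It controls the first (the transport term, exactly as in Proposition~\ref{prop-w-2}), but it is irrelevant to the second: $\int h_\varepsilon^2\,(Y_\ell\cdot\nabla V)\cdot Y_\ell\,\mathrm{d}x$ is a trilinear expression in $\nabla V$ carrying two full weights, and no bound on $V$ itself de-weights it. If you try to absorb it you produce $C\|W_\varepsilon\|_{L^2}^2$ with a large constant, which cannot be hidden in the left-hand side $\frac34\|g_\varepsilon W_\varepsilon\|_{L^2}^2$. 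What actually saves this term (and its stage-(c) analogue with $D^h_kY_\ell$) is that $\|h_\varepsilon Y_\ell\|_{L^2}\le\||\cdot|\nabla V\|_{L^2}$ and $\||\cdot|\nabla V\|_{L^6}$, $\||\cdot|\nabla^2V\|_{L^2}$ are \emph{already finite and $\varepsilon$-uniform} consequences of $W\in H^2$, so these contributions are a priori bounded constants rather than quantities needing absorption — which is precisely the observation the paper exploits by putting $E_k\cdot\nabla V$ on the right-hand side of the $E_k$-equation and estimating it in $L^2$. If you rewrite your step 2 to use the already-established weighted norms of $W$ in this way (or, better, adopt the paper's one-shot application of Lemma~\ref{RNS} to $E_k$), the argument closes; as currently justified it does not.
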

\begin{proof}
By Theorem \ref{P3.12}, Proposition \ref{coro-h2} and Proposition \ref{prop-w-2}, we know that $V$ solves
\[-\Delta V=\frac{1}{2}x\cdot \nabla V+\frac12V-\mathbb{P}\big(V\cdot\nabla V-U_0\cdot\nabla V-V\cdot\nabla U_0-U_0\cdot\nabla U_0\big).\]
By the H\"older inequality, one has
\begin{equation*}
\big\||\cdot|\nabla V\big\|_{\dot{H}^1(R^3)}\leq\|\nabla W\|_{\dot{H}^1(\R^3)}+\|V\|_{\dot{H}^1(\R^3)}\leq\| W\|_{\dot{H}^2(\R^3)}+\|V\|_{\dot{H}^1(\R^3)}.
\end{equation*}
Similarly, we have
\begin{equation*}
\|V\cdot\nabla V\|_{\dot{H}^1(\R^3)}\leq\|\nabla V\|^2_{L^4(\R^3)}+C\|V\|_{L^\infty(\R^3)}\|V\|_{\dot{H}^2(\R^3)}
\end{equation*}
and
\begin{equation*}
\begin{split}
&\|V\cdot\nabla U_0\|_{\dot{H}^1(\R^3)}+\|U_0\cdot\nabla V\|_{\dot{H}^1(\R^3)}\\
\leq&2\|\nabla V\|_{L^4(\R^3)}\|\nabla U_0\|_{L^4(\R^3)}+C\|V\|_{L^\infty(\R^3)}\|U_0\|_{\dot{H}^2(\R^3)}+C\|U_0\|_{L^\infty(\R^3)}\|V\|_{\dot{H}^2(\R^3)}.
\end{split}
\end{equation*}
By  the H\"older inequality and the Young inequality again, we obtain
\begin{equation*}
\|U_0\cdot\nabla U_0\|_{\dot{H}^1(\R^3)}\leq\|\nabla U_0\|^2_{L^4(\R^3)}+C\|U_0\|_{L^\infty(\R^3)}\|U_0\|_{\dot{H}^2(\R^3)}.
\end{equation*}
By the elliptic regularity theory, we immediately obtain
\begin{equation*}
\|V\|_{\dot{H}^3(\R^3)}<+\infty.
\end{equation*}
Setting $E_k\triangleq|x|\partial_{x_k}V$ and $P_k\triangleq|x|\partial_{x_k}P,$ we immediately find that
\begin{equation}
\begin{split}
&-\Delta E_k-V\cdot\nabla E_k-\frac12x\cdot\nabla E_k-\frac12 E_k+\nabla P_k\\
=&-|x|\partial_{x_k}\big(V\cdot\nabla U_0+U_0\cdot\nabla V+U_0\cdot\nabla U_0\big)-E_k\cdot\nabla V-V\cdot \frac{x}{|x|}\partial_{x_k}V-E_k\\&-2\frac{x}{|x|}\cdot\nabla \partial_{x_k}V-\frac{2}{|x|}\partial_{x_k}V+\frac{x}{|x|}\partial_{x_k}P.
\end{split}
\end{equation}
We see that
\begin{equation*}
\|E_k\|_{L^2(\R^3)}\leq\|W\|_{L^2(\R^3)}+\|V\|_{L^2(\R^3)}
\end{equation*}
and
\begin{equation*}
\Big\|\frac{x}{|x|}\cdot\nabla \partial_{x_k}V\Big\|_{L^2(\R^3)}\leq C\|V\|_{\dot{H}^2(\R^3)}.
\end{equation*}
By the Hardy inequality, one has
\begin{equation*}
\left\|\big({1}/{|\cdot|}\big)\partial_{x_k}V\right\|_{L^2(\R^3)}\leq C\|V\|_{\dot{H}^2(\R^3)}.
\end{equation*}
With the help of the H\"older inequality, we obtain
\begin{equation*}
\big\|V\cdot\frac{x}{|x|}\partial_{x_k}V\big\|_{L^2(\R^3)}\leq C \|W\|_{L^\infty(\R^3)}\|V\|_{\dot H^2(\R^3)}
\end{equation*}
and
\begin{align*}
&\big\||\cdot|\partial_{x_k}\big(V\cdot\nabla U_0+U_0\cdot\nabla V+U_0\cdot\nabla U_0\big)\big\|_{L^2(\R^3)}\\
\leq&C\big\||\cdot|V\big\|_{L^\infty(\R^3)}\|U_0\|_{\dot H^2(\R^3)}
+C\big\||\cdot|\nabla U_0\big\|_{L^\infty(\R^3)}\|V\|_{\dot{H}^1(\R^3)}\\&+C\big\||\cdot|U_0\big\|_{L^\infty(\R^3)}\|V\|_{\dot H^2(\R^3)}+C\big\||\cdot|\nabla U_0\big\|_{L^\infty(\R^3)}\|V\|_{\dot{H}^1(\R^3)}\\&+C\big\||\cdot|U_0\big\|_{L^\infty(\R^3)}\|U_0\|_{\dot H^2(\R^3)}
+C\big\||\cdot|\nabla U_0\big\|_{L^\infty(\R^3)}\|U_0\|_{\dot{H}^1(\R^3)}.
\end{align*}
By resorting to Lemma \ref{RNS}, we know that
\begin{equation*}
\big\||\cdot|\partial_{x_k} V\big\|_{H^2(\R^3)}<+\infty.
\end{equation*}
This estimate together with the embedding theorem leads to
\[\sup_{x\in\R^3}|x||\nabla V|(x)<+\infty.\]
We finish the proof of the proposition.
\end{proof}

\begin{prop}\label{prop-2p}
	Let the couple $(V,P)\in H^1(\R^3)\times L^2(\R^3)$  satisfy  \eqref{eq.weak}.
	Then we have
		\begin{equation*}
	\sup_{x\in\R^3}|x|^2|\nabla V|(x)+\sup_{x\in\R^3}|x|^2|\nabla P|(x)\leq C(U_0).
	\end{equation*}
\end{prop}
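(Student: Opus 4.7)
The plan is to continue the weighted bootstrapping scheme of Propositions \ref{prop-xv}--\ref{prop-nv} by raising the weight by one more power of $|x|$. The key observation is that the previously established bounds $|V|(x) + |\nabla V|(x) \leq C\langle x\rangle^{-1}$ together with $\langle x\rangle^{|\beta|+1}|D^\beta U_0|(x) \leq C$ from Proposition \ref{prop-U1} force every quadratic term on the right-hand side of the equation for $V$ to decay at least like $\langle x\rangle^{-3}$; this is precisely what is needed to push $\nabla V$ into the weighted space $|x|^2\nabla V \in H^2(\R^3)$, which by Sobolev embedding lies in $L^\infty(\R^3)$.

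Concretely, I would differentiate the equation for $V$ in $x_k$ to obtain
\begin{equation*}
-\Delta\partial_{x_k}V - \partial_{x_k}V - \tfrac{1}{2}x\cdot\nabla\partial_{x_k}V + \partial_{x_k}\nabla P = \partial_{x_k}\bigl[-U_0\cdot\nabla U_0 - (U_0+V)\cdot\nabla V - V\cdot\nabla U_0\bigr],
\end{equation*}
and then repeat the difference-quotient energy argument of Proposition \ref{prop-w-2} with the regularized weight $h_\varepsilon^4(x)$ in place of $h_\varepsilon^2(x)$. The sign-favorable cancellation that yielded the positive term $\frac{3}{4}\|W_\varepsilon\|_{L^2}^2$ in Proposition \ref{prop-xv} now yields $\sim \|h_\varepsilon^2\partial_{x_k}V\|_{L^2}^2$, which combined with $\|h_\varepsilon^2\nabla\partial_{x_k}V\|_{L^2}^2$ controls $\|h_\varepsilon^2\partial_{x_k}V\|_{H^1}^2$ uniformly in $\varepsilon$. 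The nonlinear terms are handled by H\"older's inequality using the known pointwise decay of $V,\nabla V, U_0, \nabla U_0$, so that $h_\varepsilon^2$ multiplied by each quadratic contribution lies in $L^2$ with a bound depending only on $U_0$. Iterating this energy identity for one more difference quotient (as in Proposition \ref{prop-w-2}) gives $h_\varepsilon^2 \partial_{x_k} V \in H^2(\R^3)$ uniformly in $\varepsilon$; passing to the limit $\varepsilon\to 0^+$ and invoking $H^2(\R^3)\hookrightarrow L^\infty(\R^3)$ produces $|x|^2|\nabla V|(x) \leq C(U_0)$.

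For the pressure bound, I would use the Calder\'on--Zygmund representation
\begin{equation*}
P(x) = \sum_{i,j}\frac{1}{4\pi}\partial^2_{x_i,x_j}\int_{\R^3}\frac{V^iV^j + U_0^iV^j + V^iU_0^j + U_0^iU_0^j}{|x-y|}\,\mathrm{d}y
\end{equation*}
recorded right after Theorem \ref{thm-1-I}. With the now-established pointwise decay of $V, \nabla V, U_0, \nabla U_0$, one splits the convolution integral for $\nabla P$ into near-field $|y-x|\leq |x|/2$ and far-field $|y-x|>|x|/2$; the kernel $\nabla^3\Gamma(x-y)\sim |x-y|^{-4}$ is paired against an integrand of size $\langle y\rangle^{-2}$, which after direct computation gives $|x|^2|\nabla P|(x) \leq C(U_0)$.

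The main obstacle will be the pressure cross-term that appears when testing against $h_\varepsilon^4 \partial_{x_k}V$: integration by parts produces a contribution of the form $\int h_\varepsilon^3 \nabla h_\varepsilon \cdot (\partial_{x_k}V)\,\partial_{x_k}P\,\mathrm{d}x$, which must be absorbed into the main positive quantities on the left. This is done via the Cauchy--Schwarz inequality, bounding this term by $\|g_\varepsilon(h_\varepsilon^2\partial_{x_k}V)\|_{L^2}\|\,|\cdot|\,\nabla\partial_{x_k}P\,\|_{L^2}$, where the latter factor is controlled by differentiating the elliptic equation for $P$ and applying Proposition \ref{coro-h2} together with the pointwise decay already established for $V, \nabla V, U_0$ and $\nabla U_0$.
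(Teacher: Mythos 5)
Your strategy for the velocity part is genuinely different from the paper's. You propose to push the weighted difference-quotient energy scheme one more power of $|x|$, proving $|x|^2\nabla V\in H^2(\R^3)\hookrightarrow L^\infty(\R^3)$; the paper instead abandons the energy method at this stage and switches to a representation formula: by Proposition \ref{prop-E} the function $\overline V(x)=\int_0^1\int_{\R^3}\Phi(x-y,1-s)\,s^{-3/2}F\big(y/\sqrt s\big)\,\mathrm{d}y\mathrm{d}s$, with $F=\nabla P+V\cdot\nabla V+U_0\cdot\nabla V+V\cdot\nabla U_0+U_0\cdot\nabla U_0$, solves the linear drift equation with source $F$, and since Proposition \ref{prop-nv} gives $|F|\leq C\langle x\rangle^{-2}$ the kernel representation yields $\sup_x|x|^2|\overline V|<\infty$ directly; a separate uniqueness argument for $-\Delta-\frac12(x\cdot\nabla+1)$ identifies $\overline V$ with $V$, and the same computation applied to the gradient gives $\sup_x|x|^2|\nabla V|<\infty$. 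The paper's route converts pointwise decay of the source into pointwise decay of the solution in one step and avoids a second round of difference quotients; yours stays within the variational framework but must climb all the way to $H^2$ with the weight $|x|^2$ to reach $L^\infty$. For the pressure both you and the paper use the Calder\'on--Zygmund representation of $\nabla P$ together with the decay of $V,\nabla V,U_0,\nabla U_0$; the paper's splitting of $|x|^2$ into $|y|^2$, $(|x|-|y|)^2$ and the cross term, combined with the Lorentz-space Young inequality of Lemma \ref{GHY}, is essentially the same computation as your near-field/far-field decomposition.

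Two points in your write-up need repair, though neither is fatal. First, the ``key observation'' that every quadratic term decays like $\langle x\rangle^{-3}$ is false at this stage: from Propositions \ref{prop-w-2} and \ref{prop-nv} you only know $|V|+|\nabla V|\leq C\langle x\rangle^{-1}$, so $V\cdot\nabla V$ and $U_0\cdot\nabla V$ are only $O(\langle x\rangle^{-2})$, and $|x|^2\cdot\langle x\rangle^{-2}$ is not in $L^2(\R^3)$; to place $h_\varepsilon^2$ times each quadratic term in $L^2$ you must pair the pointwise bound on one factor with the weighted $L^2$ bounds $|x|V\in H^2$, $|x|\nabla V\in H^2$ on the other (e.g.\ $\||x|^2V\cdot\nabla V\|_{L^2}\leq\||x|V\|_{L^\infty}\||x|\nabla V\|_{L^2}$), not pointwise decay alone. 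Second, the pressure cross-term $\int h_\varepsilon^3\nabla h_\varepsilon\cdot\partial_{x_k}V\,\partial_{x_k}P\,\mathrm{d}x$ carries the weight $|x|^3=|x|^2\cdot|x|$, so the Cauchy--Schwarz pairing should be $\|g_\varepsilon h_\varepsilon^2\partial_{x_k}V\|_{L^2}$ against $\||\cdot|\nabla P\|_{L^2}$ --- which is already supplied by Proposition \ref{coro-h2}, since $|x|P\in\dot H^1(\R^3)$ and $P\in L^2(\R^3)$ give $|x|\nabla P\in L^2(\R^3)$ --- not against $\||\cdot|\nabla\partial_{x_k}P\|_{L^2}$; there is no need to differentiate the pressure equation, and doing so would require second-derivative bounds on $P$ that you have not established.
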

\begin{proof}

	With the help of Proposition \ref{prop-E}, we write
	\[V^\varepsilon(x)=\int_0^1\int_{\R^3}\Phi(x-y,1-s) s^{-\frac32} F^\varepsilon\big(y/\sqrt{s}\big)\,\mathrm{d}y\mathrm{d}s,\]
	where
	\[F^\varepsilon(x)\triangleq\frac{1}{1+\varepsilon|x|^2}\left(\nabla P+V\cdot\nabla V+U_0\cdot\nabla V+V\cdot\nabla U_0+U_0\cdot\nabla U_0\right).\]
	From Proposition \ref{prop-nv},  there exists a constant $C>0$ such that
	\begin{equation}
	\left|\nabla P+V\cdot\nabla V+U_0\cdot\nabla V+V\cdot\nabla U_0+U_0\cdot\nabla U_0\right|\leq C\big(1+ |x|\big)^{-2}.
	\end{equation}
	Therefore,
	\begin{equation*}
	\begin{split}
	|x|^2\left|V^\varepsilon\right|(x)\leq&2\int_0^1\int_{\R^3}\Phi(x-y,1-s) s^{-\frac32}\left| |y|^2F^\varepsilon\big(y/\sqrt{s}\big)\right|\,\mathrm{d}y\mathrm{d}s\\
	&+2\int_0^1\int_{\R^3}|x-y|^2\Phi(x-y,1-s) s^{-\frac32} \left|F^\varepsilon\big(y/\sqrt{s}\big)\right|\,\mathrm{d}y\mathrm{d}s.
	\end{split}
	\end{equation*}
	On one hand,
	\begin{equation*}
	\int_0^1\int_{\R^3}\Phi(x-y,1-s) s^{-\frac32}\left| |y|^2F^\varepsilon\big(y/\sqrt{s}\big)\right|\,\mathrm{d}y\mathrm{d}s
	\leq \sup_{x\in\R^3}|x|^2\big|F^\varepsilon\big|(x)\int_0^1s^{-\frac12}\,\mathrm{d}s<+\infty.
	\end{equation*}
	On the other hand,
	\begin{equation*}
	\begin{split}
	&\int_0^1\int_{\R^3}|x-y|^2\Phi(x-y,1-s) s^{-\frac32} \left|F^\varepsilon\big(y/\sqrt{s}\big)\right|\,\mathrm{d}y\mathrm{d}s\\
	\leq&4\int_0^1(1-s)\int_{\R^3}\frac{|x-y|^2}{4(1-s)}\Phi(x-y,1-s) s^{-\frac32} \left|F^\varepsilon\big(y/\sqrt{s}\big)\right|\,\mathrm{d}y\mathrm{d}s\\
	 \leq&\int_0^1(1-s)s^{-\frac32}\left\|\frac{|\cdot|^2}{4(1-s)}\Phi(\cdot,1-s)\right\|_{L^2(\R^3)}\left\|F^\varepsilon\big(y/\sqrt{s}\big)\right\|_{L^2(\R^3)}\,\mathrm{d}s\\
	\leq&C\left\| |\cdot|^2 \Phi(\cdot,1 )\right\|_{L^2(\R^3)}\left\|F \right\|_{L^2(\R^3)}\int_0^1(1-s)^{\frac14}s^{-\frac34}\,\mathrm{d}s,
	\end{split}
	\end{equation*}
	where
	\[F\triangleq \nabla P+V\cdot\nabla V+U_0\cdot\nabla V+V\cdot\nabla U_0+U_0\cdot\nabla U_0.\]
	By the H\"older inequality, we find that
	\begin{align*}
	\|F\|_{L^2(\R^3)}\leq &\|\nabla P\|_{L^2(\R^3)}+\|V\|_{L^\infty(\R^3)}\|\nabla V\|_{L^2(\R^3)}+\|U_0\|_{L^\infty(\R^3)}\|\nabla V\|_{L^2(\R^3)}\\&+\|V\|_{L^\infty(\R^3)}\|\nabla U_0\|_{L^2(\R^3)}+\|U_0\|_{L^\infty(\R^3)}\|\nabla U_0\|_{L^2(\R^3)}\\
	\leq&C\|V\|_{L^\infty(\R^3)}\|\nabla V\|_{L^2(\R^3)}+C\|U_0\|_{L^\infty(\R^3)}\|\nabla V\|_{L^2(\R^3)}\\&+C\|V\|_{L^\infty(\R^3)}\|\nabla U_0\|_{L^2(\R^3)}+C\|U_0\|_{L^\infty(\R^3)}\|\nabla U_0\|_{L^2(\R^3)}.
	\end{align*}
	Combining all these estimates, we finally obtain
	\begin{equation*}
	\sup_{x\in\R^3}|x|^2\left|V^\varepsilon\right|(x)\leq C(U_0),
	\end{equation*}
	and then we get by taking $\varepsilon\to0+$ that the limit $\overline V=\int_0^1\int_{\R^3}\Phi(x-y,1-s) s^{-\frac32} F^0\big(y/\sqrt{s}\big)\,\mathrm{d}y\mathrm{d}s$ satisfies \begin{equation*}
	\sup_{x\in\R^3}|x|^2\left|\overline V\right|(x)\leq C(U_0).
	\end{equation*}
First, we consider that $W\in H^1(\R^3)$  with $|x|W\in H^1(\R^3)$ solves the following linear equations with $f\in L^2(\R^3)$:
\[-\Delta W-\frac12x\cdot\nabla W-\frac12W=f.\]
It is obvious that $W$ is unique in the space of $H^1(\R^3)$. Indeed, suppose that $\overline W\in H^1(\R^3)$  with $|x|\overline W\in H^1(\R^3)$ is
another solution of the above linear equations. Then the difference $\delta W\triangleq W-\overline W$ satisfies
\[-\Delta\delta W-\frac12x\cdot\nabla \delta W-\frac12\delta W=0\]
This implies $\|\delta W \|_{ H^1(\R^3)}=0$, and then we get the uniqueness. This yields hat $V=\overline V$, and we have

	\begin{equation*}
	\sup_{x\in\R^3}|x|^2\left|V\right|(x)\leq C(U_0).
	\end{equation*}
	In the same fashion as in  proving the above estimates, we can show that
	\begin{equation*}
	\sup_{x\in\R^3}|x|^2\left|\nabla V\right|(x)\leq C(U_0).
	\end{equation*}
	Next we show the decay estimate for the pressure $P$. Recall that
	\begin{equation*}
	-\Delta P=\text{div\,div}\,\big( V\otimes V+V\otimes U_0+U_0\otimes V+U_0\otimes U_0\big),
	\end{equation*}
one writes
	\begin{equation*}
	\begin{split}
	P=&\frac{1}{4\pi}\int_{\R^3}\frac{1}{|x-y|}\text{div\,div}\,\big( V\otimes V+V\otimes U_0+U_0\otimes V+U_0\otimes U_0\big)(y)\,\mathrm{d}y\\
	=&\sum_{i,j=1}^3\frac{1}{4\pi}\int_{\R^3}\frac{1}{|x-y|}\left( \partial_{x_j}V^i\partial_{x_i} V^j+\partial_{x_j}V^i\partial_{x_i} U_0^j+\partial_{x_j}U_0^i\partial_{x_i} V^j+\partial_{x_j}U_0^i\partial_{x_i} U_0^j\right)(y)\,\mathrm{d}y.
	\end{split}
	\end{equation*}
	Furthermore, we have
	\begin{equation*}
	\begin{split}
	\partial_{x_k}P=&\sum_{i,j=1}^3\int_{\R^3}K_{i,k}(x,y)\left(\partial_{x_j} V^i V^j+\partial_{x_j}V^i U_0^j+\partial_{x_j}U_0^iV^j+\partial_{x_j}U_0^iU_0^j\right)(y)\,\mathrm{d}y.
	\end{split}
	\end{equation*}
	Multiplying the above equality by $|x|^2$, we readily have
	\begin{equation*}
	\begin{split}
	|&x|^2\partial_{x_{k}}P(x)\\
	\leq&C\sum_{i,j=1}^3\Big(\int_{\R^3}\frac{1}{|x-y|^2}|y|^2\left|\partial_{x_j} V^i \partial_{x_i}V^j+\partial_{x_j}V^i \partial_{x_i}U_0^j+\partial_{x_j}U_0^i\partial_{x_i}V^j+\partial_{x_j}U_0^i\partial_{x_i}U_0^j\right|(y)\,\mathrm{d}y\\
	&+\int_{\R^3}\left| \partial_{x_j} V^i \partial_{x_i}V^j+\partial_{x_j}V^i \partial_{x_i}U_0^j+\partial_{x_j}U_0^i\partial_{x_i}V^j+\partial_{x_j}U_0^i\partial_{x_i}U_0^j\right|(y)\,\mathrm{d}y\Big).
	\end{split}
	\end{equation*}
	By the H\"older inequality, we see that
	\begin{align*}
	 \int_{\R^3}\left| \partial_{x_j} V^i \partial_{x_i}V^j+\partial_{x_j}V^i \partial_{x_i}U_0^j+\partial_{x_j}U_0^i\partial_{x_i}V^j+\partial_{x_j}U_0^i\partial_{x_i}U_0^j\right|\,\mathrm{d}y
	\leq C\|V\|^2_{\dot H^1(\R^3)}+C\|U_0\|^2_{\dot H^1(\R^3)}.
	\end{align*}
	By the generalized Young inequality in Lemma \ref{GHY}, we get
	\begin{equation*}
	\begin{split}
	&\int_{\R^3}\frac{1}{|x-y|^2}|y|^2\left| \partial_{x_j} V^i \partial_{x_i}V^j+\partial_{x_j}V^i \partial_{x_i}U_0^j+\partial_{x_j}U_0^i\partial_{x_i}V^j+\partial_{x_j}U_0^i\partial_{x_i}U_0^j\right|(y)\,\mathrm{d}y\\
	\leq &C\left\||\cdot|^2\partial_{x_j} V^i \partial_{x_i}V^j+\partial_{x_j}V^i \partial_{x_i}U_0^j+\partial_{x_j}U_0^i\partial_{x_i}V^j+\partial_{x_j}U_0^i\partial_{x_i}U_0^j(y)\right\|_{L^{3,1}(\R^3)}\\
	\leq&C\big\||\cdot|\nabla V\big\|_{L^{6,2}(\R^3)}^2+C\big\||\cdot|^2U_0\big\|_{L^{\infty}(\R^3)}\left(\big\||\nabla U_0\big\|_{L^{3,1}(\R^3)}
	+\big\|\nabla V\big\|_{L^{3,1}(\R^3)}\right)\\
	\leq& C\|E\|_{H^2(\R^3)}^2+C\big\||\cdot|^2U_0\big\|_{L^{\infty}(\R^3)}\left(\big\||U_0\big\|_{H^2(\R^3)}
	+\big\|V\big\|_{H^2(\R^3)}\right).
	\end{split}
	\end{equation*}
	Combining both estimates, we get
	\begin{equation*}
	\sup_{x\in\R^3}|x|^2|\nabla P|(x)\leq C(U_0).
	\end{equation*}
	So we complete the proof the proposition.
\end{proof}
\begin{proof}[Proof of Theorem \ref{thm-1-I}]
	We calculate
\begin{equation}
\begin{split}
-\Delta\big(|x|^3\partial_{x_k}P\big) =&-12|x|\partial_{x_{k}}P-6|x|x\cdot\nabla\partial_{x_{k}}P-|x|^3\Delta\partial_{x_k}P\\
=&12|x|\partial_{x_{k}}P-6\text{div}\,\big(|x|x\partial_{x_{k}}P\big)-\partial_{x_k}\big(|x|^3\Delta P\big)+3|x|x_k\Delta P.
\end{split}
\end{equation}
Thus, we have
\begin{equation*}
\begin{split}
&\big\|\big(|\cdot|^3\partial_{x_k}P\big)\big\|_{\dot{B}^0_{\infty,\infty}(\R^3)}\\
\leq& C\big\||\cdot|\partial_{x_{k}}P\big\|_{\dot{B}^{-2}_{\infty,\infty}(\R^3)}+C\big\||\cdot|^2\partial_{x_{k}}P\big\|_{\dot{B}^{-1}_{\infty,\infty}(\R^3)}+\big\||\cdot|^3\Delta P\big\|_{\dot{B}^{-1}_{\infty,\infty}(\R^3)}+\big\||\cdot|^2\Delta P\big\|_{\dot{B}^{-2}_{\infty,\infty}(\R^3)}\\
\leq&C\big\||\cdot|\partial_{x_{k}}P\big\|_{L^{\frac32,\infty}(\R^3)}+C\big\||\cdot|^2\partial_{x_{k}}P\big\|_{L^{3,\infty}(\R^3)}+\big\||\cdot|^3\Delta P\big\|_{\dot{B}^{-1}_{\infty,\infty}(\R^3)}+\big\||\cdot|^2\Delta P\big\|_{\dot{B}^{-2}_{\infty,\infty}(\R^3)}\\
\leq&C\big\||\cdot|^2\partial_{x_{k}}P\big\|_{L^{3,\infty}(\R^3)}+\big\||\cdot|^3\Delta P\big\|_{\dot{B}^{-1}_{\infty,\infty}(\R^3)}+\big\||\cdot|^2\Delta P\big\|_{\dot{B}^{-2}_{\infty,\infty}(\R^3)}.
\end{split}
\end{equation*}
Since
\[-\Delta P=\mathrm{div}\left(V\cdot\nabla V+U_0\cdot\nabla V+V\cdot\nabla U_0+U_0\cdot\nabla U_0\right),\]
 we readily have by Proposition \ref{prop-2p} that
\begin{equation*}
\big|\Delta P\big|(x)\leq C|x|^{-4}.
\end{equation*}
Furthermore, we get from Lemma \ref{lemma-young} that
\begin{align*}
\big\||\cdot|^3\Delta P\big\|_{\dot{B}^{-1}_{\infty,\infty}(\R^3)}\leq&C\big\||\cdot|^3\Delta P\big\|_{L^{3,\infty}(\R^3)}\\
\leq&C\big\||\cdot|^4\Delta P\big\|_{L^{3,\infty}(\R^3)}\big\|1/|\cdot|\big\|_{L^{3,\infty}(\R^3)}.
\end{align*}
Similarly, we have
\begin{align*}
\big\||\cdot|^2\Delta P\big\|_{\dot{B}^{-2}_{\infty,\infty}(\R^3)}\leq&C\big\||\cdot|^3\Delta P\big\|_{L^{\frac32,\infty}(\R^3)}\\
\leq&C\big\||\cdot|^4\Delta P\big\|_{L^{3,\infty}(\R^3)}\left\|1/|\cdot|^2\right\|_{L^{\frac32,\infty}(\R^3)}.
\end{align*}
We see that
\begin{equation*}
\begin{split}
|x|^2\partial_{x_{k}}P(x)
=&\sum_{i,j=1}^3\Big(\int_{\R^3}K_{i,k}(x,y)|y|^2\left(\partial_{x_j} V^i V^j+\partial_{x_j}V^iU_0^j+\partial_{x_j}U_0^iV^j+\partial_{x_j}U_0^iU_0^j\right)\,\mathrm{d}y\\
&+\int_{\R^{3}}K_{i,k}(x,y)(|x|-|y|)^2\left(\partial_{x_j} V^i V^j+\partial_{x_j}V^iU_0^j+\partial_{x_j}U_0^iV^j+\partial_{x_j}U_0^iU_0^j\right)\,\mathrm{d}y\\
&+2\int_{\R^{3}}K_{i,k}(x,y)(|x|-|y|)|y|\left(\partial_{x_j} V^i V^j+\partial_{x_j}V^iU_0^j+\partial_{x_j}U_0^iV^j+\partial_{x_j}U_0^iU_0^j\right)\,\mathrm{d}y\Big)\\
\triangleq&I_1+I_2+I_3.
\end{split}
\end{equation*}
Properties of Calder\'on-Zygmund singular operator enable us to conclude
\begin{align*}
\|I_1\|_{L^{3,\infty}(\R^3)}\leq& C\left\||\cdot|^2\left(\partial_{x_j} V^i V^j+\partial_{x_j}V^iU_0^j+\partial_{x_j}U_0^iV^j+\partial_{x_j}U_0^iU_0^j\right)\right\|_{L^{3,\infty}(\R^3)}\\
\leq&C\left\|\left(|\cdot|^2\partial_{x_j} V^i\right)\right\|_{L^\infty(\R^3)}\left(\left\|V^j \right\|_{L^{3,\infty}(\R^3)}+\left\|U_0^j \right\|_{L^{3,\infty}(\R^3)}\right)\\
&+C\left\|\left(|\cdot|^2\partial_{x_j} U_0^i\right)\right\|_{L^\infty(\R^3)}\left(\left\|V^j \right\|_{L^{3,\infty}(\R^3)}+\left\|U_0^j\right\|_{L^{3,\infty}(\R^3)}\right).
\end{align*}
Integrating by parts leads to
\begin{equation*}
\begin{split}
 I_2 =&-\int_{\R^{3}}\partial_{x_j}\left(K_{i,k}(x,y)(|x|-|y|)^2\right)\left( V^i V^j+V^iU_0^j+U_0^iV^j+U_0^iU_0^j\right)\,\mathrm{d}y\\
 \leq& C\int_{\R^{3}}\frac{1}{|x-y|^2}\left|V^i V^j+V^iU_0^j+U_0^iV^j+U_0^iU_0^j\right|\,\mathrm{d}y.
 \end{split}
\end{equation*}
By the generalized Young inequality in Lemma \ref{GHY}, we obtain
\begin{align*}
\big\|I_2\big\|_{L^{3,\infty}(\R^3)}\leq &C\left\|V^i V^j+V^iU_0^j+U_0^iV^j+U_0^iU_0^j\right\|_{L^{\frac32,\infty}(\R^3)}\\
\leq&C\|V\|^2_{L^{3,\infty}(\R^3)}+C\|U_0\|^2_{L^{3,\infty}(\R^3)}.
\end{align*}
Since
\begin{equation*}
\big|I_3\big|\leq \int_{\R^{3}}\frac{1}{|x-y|^2}|y|\left|\partial_{x_j} V^i V^j+\partial_{x_j}V^iU_0^j+\partial_{x_j}U_0^iV^j+\partial_{x_j}U_0^iU_0^j\right|\,\mathrm{d}y,
\end{equation*}
we have
\begin{align*}
\big\|I_3\big\|_{L^{3,\infty}(\R^3)}\leq& C\left\||\cdot|^2\left(\partial_{x_j} V^i V^j+\partial_{x_j}V^iU_0^j+\partial_{x_j}U_0^iV^j+\partial_{x_j}U_0^iU_0^j\right)\right\|_{L^{\frac32,\infty}(\R^3)}\\
\leq&C\left\|\left(|\cdot| V^j\right)\right\|_{L^\infty(\R^3)}\left(\left\|\partial_{x_j}V^i \right\|_{L^{\frac32,\infty}(\R^3)}+\left\|\partial_{x_j}U_0^i \right\|_{L^{\frac32,\infty}(\R^3)}\right)\\
&+C\left\||\cdot| U_0^j\right\|_{L^\infty(\R^3)}\left(\left\|\partial_{x_j}V^i \right\|_{L^{\frac32,\infty}(\R^3)}+\left\|\partial_{x_j}U_0^i \right\|_{L^{\frac32,\infty}(\R^3)}\right).
\end{align*}
Collecting all these estimates, we eventually obtain that
\begin{equation*}
\big\||\cdot|^2\nabla P\big\|_{L^{3,\infty}(\R^3)}<+\infty.
\end{equation*}
From  Proposition \ref{prop-2p}, it follows that
\[\sup_{x\in\R^3}\langle x\rangle^2\big(|V|+|\nabla V|\big)(x)<+\infty.\]
Moreover, by \cite[Equality (4.11)]{JS}, we get
 $$
| V|(x)\leq C(1+|x|)^{-3}\log(1+|x|)\qquad\text{for all}\,\,\,x\in\R^3$$
and  \begin{equation}\label{eq.HI}
|D V|(x)\leq C(1+|x|)^{-3}\qquad\text{for all}\,\,\,x\in\R^3.
\end{equation}
We finish the proof of Theorem \ref{thm-1-I}.
\end{proof}
\section{Proof of Theorem \ref{T1.1}}
In this section, we give a  proof of Theorem \ref{T1.1}. First of all, we focus on the existence of the forward self-similar solution. Letting $$u_{\rm L}(x,t)\triangleq t^{\frac{1-2\alpha}{2\alpha}}U_0\big(x/t^{\frac{1}{2\alpha}}\big), \qquad v(x,t)\triangleq t^{\frac{1-2\alpha}{2\alpha}}V\big(x/t^{\frac{1}{2\alpha}}\big)$$ and the pressure $p(x,t)\triangleq t^{\frac{1-2\alpha}{\alpha}}P\big(x/t^{\frac{1}{2\alpha}}\big)$, where $V$ and $P$ was constructed in \eqref{eq.weak}. We compute
\begin{equation*}
\big(\partial_t+(-\Delta)^\alpha\big) u_{\text{L} }(x,t)=\frac{1}{t}\left((-\Delta)^\alpha U_0-\frac{1}{2\alpha}\left(\frac{x}{ t^{\frac{1}{2\alpha}}}\cdot U_0-\big(2\alpha-1\big) U_0\right)\right)\Big(\frac{x}{t^{\frac{1}{2\alpha}}}\Big)=0,
\end{equation*}
and
\begin{align*}
\big(\partial_t+(-\Delta)^\alpha\big) v(x,t)=&\frac{1}{t}\left((-\Delta)^\alpha V-\frac{1}{2\alpha}\left(\frac{x}{ t^{\frac{1}{2\alpha}}}\cdot V-\big(2\alpha-1\big) V\right)\right)\Big(\frac{x}{t^{\frac{1}{2\alpha}}}\Big)\\
=&-\frac{1}{t}\big(\nabla P+V\cdot\nabla V+U_0\cdot\nabla V+V\cdot\nabla U_0+U_0\cdot\nabla U_0\big)\Big(\frac{x}{t^{\frac{1}{2\alpha}}}\Big)\\
=&-\big(\nabla P+v\cdot \nabla v+v_{\rm L}\cdot\nabla v+v\cdot\nabla u_{\rm L}+u_{\rm L}\cdot\nabla u_{\rm L}\big)\Big(\frac{x}{t^{\frac{1}{2\alpha}}}\Big).
\end{align*}
Now let us denote $u(x,t)\triangleq u_{\rm L}(x,t)+v(x,t)$, it is easy to verify that $u(x,t)$ is the solution of the fractional Navier-Stokes equations
\begin{equation*}
\partial_tu+u\cdot\nabla u+(-\Delta)^{\alpha}u+\nabla p=0\qquad \text{in}\quad \R^3\times(0,+\infty).
\end{equation*}
Next, we want to show that $u(x,t)\in \text{BC}_{\rm w}\big([0,+\infty);\,L^{(\frac{3}{2\alpha-1},\infty)}(\R^3)\big)$. Since $V\in H^{\alpha}(\R^3)$, we know by the embedding theorem that
\begin{equation*}
\|v(t)\|_{L^{\frac{3}{2\alpha-1}}(\R^3)}=\|V\|_{L^{\frac{3}{2\alpha-1}}(\R^3)}\leq C\|V\big\|_{H^\alpha(\R^3)}<+\infty
\end{equation*}
as long as $\alpha\in(5/6,\,1].$ On the other hand,  by Proposition \ref{prop-U1}, we have

\[\big\|u_{\rm L}(t)\big\|_{L^{(\frac{3}{2\alpha-1},\infty)}(\R^3)}\leq C \|u_0\|_{L^{(\frac{3}{2\alpha-1},\infty)}(\R^3)}.\]

Combining both estimates yields that $\|u\|_{L^\infty\big([0,+\infty);\,L^{(\frac{3}{2\alpha-1},\infty)}(\R^3)\big)}$ is  bounded. Now, we begin to show the weak weak continuity with respect to time $t$. For the linear part $u_{\rm L}(x,t)$, it is obvious that $u_{\rm L}\in C_{\rm w}([0,+\infty);\,L^{(\frac{3}{2\alpha-1},\infty)}(\R^3)) $ and $u_{\rm L}\to u_0$ in the weak sense of $L^{(\frac{3}{2\alpha-1},\infty)}(\R^3)$ as $t$ goes to $0+$. So we  need to show that $v\in C_{\rm w}([0,+\infty);\, L^{(\frac{3}{2\alpha-1},\infty)}(\R^3))$. Since $\alpha\in(5/6,\,1]$ and $V\in H^\alpha(\R^3)$, we know that for each $2\leq p\leq \frac{6}{3-2\alpha},$
\[\|V\|_{L^p(\R^3)}\leq C\|V\|_{H^\alpha(\R^3)}.\]
This estimate together with the fact $\big\|v(t)\big\|_{L^p}=t^{\frac{1}{2\alpha}(1+\frac{3}{p})-1}\|V\|_{L^{p}(\R^{3})}$ means that for every $2\leq p\leq \frac{6}{3-2\alpha},$
\[\|v(t)\|_{L^p(\R^3)}\leq Ct^{\frac{1}{2\alpha}(1+\frac{3}{p})-1}.\]
It follows  that for $2\leq p<\frac{3}{2\alpha-1},$
\[\|v(t)\|_{L^p(\R^3)}\to 0\qquad\text{as}\,\,\, t\to0+.\]
With this property in hand, we can infer that
$v(x,t)\rightharpoonup 0$ in $ L^{\frac{3}{2\alpha-1}}(\R^{3})$ as $t\to0+,$
which implies that  $u(x,t)\to u_0(x)$ as $t\to 0+$ in the weak sense of $L^{\frac{3}{2\alpha-1}}(\R^{3})$.

Finally, we  show the higher decay estimate of solution $v$ for the case $\alpha=1$. From Theorem~\ref{thm-1-I}, we know that
\[ \big(1+|x|^3\big)|V|(x)\leq C\log\big(1+|x|\big)\qquad\text{for all}\quad x\in\R^3.\]
Combining this estimate with the relation that $v(x,t)=\sqrt{t}^{-1}V\big(x/\sqrt{t}\big)$ enables us to conclude that
\begin{equation*}
\sqrt{t}\left(1+\frac{|x|^3}{(\sqrt{t})^3}\right)v(x,t)=\left(1+\frac{|x|^3}{(\sqrt{t})^3}\right)V\left(\frac{x}{\sqrt{t}}\right)\leq C\log\bigg(1+\frac{|x| }{\sqrt{t}}\bigg).
\end{equation*}
 Thus we have
 \begin{equation*}
 |v(x,t)|\leq C\frac{t}{t^{\frac32}+|x|^3}\log\bigg(1+\frac{|x| }{\sqrt{t}}\bigg)\qquad\text{for all}\quad (x,t)\in\R^3\times(0,+\infty).
 \end{equation*}
Thanks to Proposition \ref{prop-U1}, we know that
$
\sup_{x\in\R^3}\big(1+|x|\big)|U_{0}(x)|<+\infty.$
Since \[u_{\rm L}(x,t)=\frac{1}{\sqrt{t}}U_0\bigg(\frac{x}{\sqrt{t}}\bigg),\] we readily have  that  \begin{equation*}
|u_{\rm L}(x,t)|\leq C\frac{\sqrt t}{\sqrt t+|x|}\qquad\text{for all}\quad (x,t)\in\R^3\times(0,+\infty).
\end{equation*}
So we finish the proof of Theorem \ref{T1.1}.

\section*{Acknowledgments} The authors thank the  referee and the associated editor for their
invaluable comments  which helped improve the paper greatly.   This work is supported in part by the National Natural Science Foundation of China
 under grant  No.11671047, No.11871087,  No.11831004, No.11771423 and No.11826005.

\end{document}